\newtheorem{theorem}{Theorem}[section]
\newtheorem{lemma}[theorem]{Lemma}
\newtheorem{prop}[theorem]{Proposition}
\newtheorem{remark}[theorem]{Remark}
\begin{document}

\title{Global Hilbert Expansion for the Vlasov-Poisson-Boltzmann System}

\author{Yan Guo \\
\textit{Brown University} \\
\textit{guoy@cfm.brown.edu} \vspace*{.10in} \\
Juhi Jang\\
\textit{Courant Institute}\\
\textit{juhijang@cims.nyu.edu}}

\maketitle

\begin{abstract}
We study the Hilbert expansion for small Knudsen number $\varepsilon $%
\begin{equation}
F^{\varepsilon }=\sum_{n=0}^{2k-1}\varepsilon ^{n}F_{n}+\varepsilon
^{k}F_{R}^{\varepsilon };\;\;\nabla \phi ^{\varepsilon
}=\sum_{n=0}^{2k-1}\varepsilon ^{n}\nabla \phi _{n}+\varepsilon ^{k}\nabla
\phi _{R}^{\varepsilon }\text{ }(k\geq 6)  \label{exp}
\end{equation}%
for the Vlasov-Boltzmann-Poisson system for an electron gas:
\begin{equation}  \label{vpb}
\begin{split}
\partial _{t}F^{\varepsilon }+v\cdot \nabla _{x}F^{\varepsilon }+\nabla
_{x}\phi ^{\varepsilon }\cdot \nabla _{v}F^{\varepsilon }& =\frac{1}{%
\varepsilon }Q(F^{\varepsilon },F^{\varepsilon })\,, \\
\Delta \phi ^{\varepsilon }& =\int_{\mathbb{R}^{3}}F^{\varepsilon }dv-%
\overline{\rho }\,,\;|\phi ^{\varepsilon }|\rightarrow 0\text{ as }%
|x|\rightarrow \infty \,.
\end{split}%
\end{equation}%
The zeroth order term local Maxwellian takes the form:%
\begin{equation}
F_{0}(t,x,v)=\frac{\rho _{0}(t,x)}{(2\pi \theta _{0}(t,x))^{3/2}}%
e^{-|v-u_{0}(t,x)|^{2}/2\theta _{0}(t,x)},\text{ \ }\theta _{0}(t,x)=K\rho
_{0}^{2/3}(t,x).  \label{LM}
\end{equation}%
Our main result states that if (\ref{exp}) is valid at $t=0,$ with smooth
irrotational velocity $\nabla \times u_{0}(0,x)=0$, $\int \{\rho _{0}(0,x)-%
\overline{\rho }\}dx=0$, and $u_{0}(0,x)$ and $\rho _{0}(0,x)-\overline{\rho
}$ sufficiently small, then (\ref{exp}) is valid for $0\leq t\leq
\varepsilon ^{-\frac{1}{2}\frac{2k-3}{2k-2}},$ where $\rho _{0}(t,x)$ and $%
u_{0}(t,x)$ satisfy the Euler-Poisson system
\begin{equation}
\begin{split}  \label{ep}
\partial _{t}\rho _{0}+({u_{0}}\!\cdot \!\nabla )\rho _{0}+\rho _{0}\nabla
\!\cdot \!{u_{0}}& =0\,, \\
\rho _{0}\partial _{t}{u_{0}}+\rho _{0}({u_{0}}\!\cdot \!\nabla ){u_{0}}%
+\nabla {\{K\rho _{0}^{5/3}\}}-\rho _{0}\nabla \phi _{0}& =0\,, \\
\Delta \phi _{0}& =\rho _{0}-\overline{\rho }\,.
\end{split}%
\end{equation}
\end{abstract}

\singlespacing

\bigskip

\numberwithin{equation}{section}


\section{Introduction and Formulation}

The dynamics of electrons in the absence of a magnetic field can be
described by the Vlasov-Poisson-Boltzmann system (\ref{vpb}) where $%
F(t,x,v)\geq 0$ is the number density function for the electron at time $%
t\geq 0$, position $x=(x_{1},x_{2},x_{3})\in $ $\mathbb{R}^{3}$ and velocity
$v=(v_{1},v_{2},v_{3})\in {\mathbb{R}}^{3}$. The self-consistent electric
potential $\phi (t,x)$ is coupled with the distribution function $F$ through
the Poisson equation. The constant ion background charge is denoted by $%
\overline{\rho }>0$. The collision between particles is given by the
standard Boltzmann collision operator $Q(G_{1},G_{2})$ with hard-sphere
interaction:
\begin{equation*}
Q(G_{1},G_{2})=\int_{{\mathbb{R}}^{3}\times S^{2}}|(u-v)\cdot \omega
|\{G_{1}(v^{\prime })G_{2}(u^{\prime })-G_{1}(v)G_{2}(u)\}dud\omega ,
\end{equation*}%
where $\text{ }v^{\prime }=v-[(v-u)\cdot \omega ]\omega \text{ }$ and $\text{
}u^{\prime }=u+[(v-u)\cdot \omega ]\omega .$ On the other hand, at the
hydrodynamic level, the electron gas obeys the Euler-Poisson system (\ref{ep}%
), which is an important `two-fluid' model for a plasma.

The purpose of this article is to derive the Euler-Poisson system (\ref{ep})
from the Vlasov-Poisson-Boltzmann system (\ref{vpb}) as the Knudsen number
(the mean free path) $\varepsilon $ tends to zero. We consider the truncated
Hilbert expansion (\ref{exp}). To determine the coefficients $%
F_{0}(t,x,v),...,F_{2k-1}(t,x,v);$ $\phi _{0}(t,x,v),...,\phi _{2k-1}(t,x,v)$%
, we plug the formal expansion (\ref{exp}) into the rescaled equations (\ref%
{vpb}):
\begin{equation}
\begin{split}\label{1/e}
\partial _{t}(\sum_{n=0}^{2k-1}\varepsilon ^{n}F_{n}+\varepsilon
^{k}F_{R}^{\varepsilon })& +v\cdot \nabla _{x}(\sum_{n=0}^{2k-1}\varepsilon
^{n}F_{n}+\varepsilon ^{k}F_{R}^{\varepsilon })+\nabla
_{x}(\sum_{n=0}^{2k-1}\varepsilon ^{n}\phi _{n}+\varepsilon ^{k}\phi
_{R}^{\varepsilon })\cdot \nabla _{v}(\sum_{n=0}^{2k-1}\varepsilon
^{n}F_{n}+\varepsilon ^{k}F_{R}^{\varepsilon }) \\
& =\frac{1}{\varepsilon }Q(\sum_{n=0}^{2k-1}\varepsilon
^{n}F_{n}+\varepsilon ^{k}F_{R}^{\varepsilon },\sum_{n=0}^{2k-1}\varepsilon
^{n}F_{n}+\varepsilon ^{k}F_{R}^{\varepsilon }), \\
\Delta (\sum_{n=0}^{2k-1}\varepsilon ^{n}\phi _{n}+\varepsilon ^{k}\phi
_{R}^{\varepsilon })& =\int_{\mathbb{R}^{3}}(\sum_{n=0}^{2k-1}\varepsilon
^{n}F_{n}+\varepsilon ^{k}F_{R}^{\varepsilon })dv-\overline{\rho }.
\end{split}%
\end{equation}%
Now we equate the coefficients on both sides of the equation (\ref{1/e}) in
front of different powers of the parameter $\varepsilon $ to obtain:
\begin{equation}
\begin{split}\label{coeff}
\frac{1}{\varepsilon }:& \;Q(F_{0},F_{0})=0, \\
\varepsilon ^{0}:& \;\partial _{t}F_{0}+v\cdot \nabla _{x}F_{0}+\nabla
_{x}\phi _{0}\cdot \nabla _{v}F_{0}=Q(F_{1},F_{0})+Q(F_{0},F_{1}), \\
& \;\Delta \phi _{0}=\int_{\mathbb{R}^{3}}F_{0}dv-\overline{\rho }, \\
& \;... \\
\varepsilon ^{n}:& \;\partial _{t}F_{n}+v\cdot \nabla _{x}F_{n}+\nabla
_{x}\phi _{0}\cdot \nabla _{v}F_{n}+\nabla _{x}\phi _{n}\cdot \nabla
_{v}F_{0}=\sum_{\substack{ i+j=n+1 \\ i,j\geq 0}}Q(F_{i},F_{j})-\sum
_{\substack{ i+j=n \\ i,j\geq 1}}\nabla _{x}\phi _{i}\cdot \nabla _{v}F_{j},
\\
& \;\Delta \phi _{n}=\int_{\mathbb{R}^{3}}F_{n}dv.
\end{split}%
\end{equation}%
The remainder equations for $F_{R}^{\varepsilon }$ and $\phi
_{R}^{\varepsilon }$ are given as follows:
\begin{equation}
\begin{split}\label{F_R}
\partial _{t}F_{R}^{\varepsilon }+\;& v\cdot \nabla _{x}F_{R}^{\varepsilon
}+\nabla _{x}\phi _{0}\cdot \nabla _{v}F_{R}^{\varepsilon }+\nabla _{x}\phi
_{R}^{\varepsilon }\cdot \nabla _{v}F_{0}-\frac{1}{\varepsilon }%
\{Q(F_{0},F_{R}^{\varepsilon })+Q(F_{R}^{\varepsilon },F_{0})\} \\
& =\varepsilon ^{k-1}Q(F_{R}^{\varepsilon },F_{R}^{\varepsilon
})+\sum_{i=1}^{2k-1}\varepsilon ^{i-1}\{Q(F_{i},F_{R}^{\varepsilon
})+Q(F_{R}^{\varepsilon },F_{i})\}-\varepsilon ^{k}\nabla _{x}\phi
_{R}^{\varepsilon }\cdot \nabla _{v}F_{R}^{\varepsilon } \\
& \;\;\;-\sum_{i=1}^{2k-1}\varepsilon ^{i}\{\nabla _{x}\phi _{i}\cdot \nabla
_{v}F_{R}^{\varepsilon }+\nabla _{x}\phi _{R}^{\varepsilon }\cdot \nabla
_{v}F_{i}\}+\varepsilon ^{k-1}A, \\
\Delta \phi _{R}^{\varepsilon }=& \int_{\mathbb{R}^{3}}F_{R}^{\varepsilon
}dv,
\end{split}%
\end{equation}%
where
\begin{equation}
A=\sum_{\substack{ i+j\geq 2k \\ 1\leq i,j\leq 2k-1}}\varepsilon
^{i+j-2k}Q(F_{i},F_{j})-\sum_{\substack{ i+j\geq 2k-1 \\ 0\leq i,j\leq 2k-1}}%
\varepsilon ^{i+j-2k+1}\nabla _{x}\phi _{i}\cdot \nabla _{v}F_{j}-\{\partial
_{t}F_{2k-1}+v\cdot \nabla _{x}F_{2k-1}\}.  \label{A}
\end{equation}%
From the first condition, the $\frac{1}{\varepsilon }$ step, in (\ref{coeff}%
), we deduce that the first coefficient $F_{0}$ should be a local Maxwellian
$\omega =F_{0}$ as given in (\ref{LM}), where $\rho _{0}(t,x),u_{0}(t,x)$ and $\theta
_{0}(t,x)$ represent the macroscopic density, velocity, and temperature
fields respectively. Note that
\begin{equation*}
\int_{\mathbb{R}^{3}}F_{0}dv=\rho _{0},\;\;\int_{\mathbb{R}%
^{3}}vF_{0}dv=\rho _{0}u_{0},\;\;\int_{\mathbb{R}^{3}}|v|^{2}F_{0}dv=\rho
_{0}|u_{0}|^{2}+3\rho _{0}\theta _{0}.
\end{equation*}%
Projecting the equation of $F_{0}$ from the $\varepsilon ^{0}$ step in (\ref%
{coeff}) onto $1,v,\frac{|v|^{2}}{2}$, which are five collision invariants for the Boltzmann
collision operator $Q$, we obtain the equations for $\rho
_{0},u_{0},\theta _{0}$:
\begin{equation}
\begin{split}\label{E_P}
& \partial _{t}\rho _{0}+\nabla \cdot (\rho _{0}u_{0})=0, \\
\ & \partial _{t}(\rho _{0}u_{0})+\nabla (\rho _{0}u_{0}\otimes
u_{0})+\nabla (\rho _{0}\theta _{0})-\rho _{0}\nabla \phi _{0}=0, \\
\ & \partial _{t}(\frac{\rho _{0}|u_{0}|^{2}}{2}+\frac{3\rho _{0}\theta _{0}%
}{2})+\nabla \cdot (\frac{5\rho _{0}\theta _{0}u_{0}}{2}+\frac{\rho
_{0}|u_{0}|^{2}u_{0}}{2})-\rho _{0}\nabla \phi _{0}\cdot u_{0}=0, \\
\ & \Delta \phi _{0}=\rho _{0}-\overline{\rho }.
\end{split}%
\end{equation}%
Setting $p_{0}=\rho _{0}\theta _{0}$: the equation of state, these equations
form the repulsive Euler-Poisson system for an ideal perfect electron gas.
This ideal gas law and an internal energy of $\frac{3\rho _{0}\theta _{0}}{2}
$ lead to a $\frac{5}{3}-$law perfect gas. In order to see that, we first
note that for smooth solutions, the system \eqref{E_P} can be written as
follows:
\begin{equation*}
\begin{split}
& \partial _{t}\rho _{0}+({u_{0}}\!\cdot \!\nabla )\rho _{0}+\rho _{0}\nabla
\!\cdot \!{u_{0}}=0\,, \\
& \rho _{0}\partial _{t}{u_{0}}+\rho _{0}({u_{0}}\!\cdot \!\nabla ){u_{0}}%
+\nabla (\rho _{0}\theta _{0})-\rho _{0}\nabla \phi _{0}=0\,, \\
& \partial _{t}\theta _{0}+({u_{0}}\!\cdot \!\nabla )\theta _{0}+\tfrac{2}{3}%
\theta _{0}\nabla \!\cdot \!{u_{0}}=0\,, \\
& \Delta \phi _{0}=\rho _{0}-\overline{\rho }\,.
\end{split}%
\end{equation*}%
The continuity equation $(\rho _{0})$ and the temperature equation $(\theta
_{0})$ are equivalent when $\theta _{0}\sim \rho _{0}^{\tfrac{2}{3}}$.
Therefore, letting $\theta _{0}\equiv K\rho _{0}^{\frac{2}{3}}$, we can
recover the isentropic Euler-Poisson flow for monatomic gas where the
adiabatic exponent $\gamma =\frac{5}{3}$ from \eqref{E_P}: where $%
p_{0}=K\rho _{0}^{\frac{5}{3}}$.

We define the linearized Boltzmann operator at $\omega $ as
\begin{eqnarray*}
Lg &=&-\frac{1}{\sqrt{\omega }}\{Q(\sqrt{\omega }g,\omega )+Q(\omega ,\sqrt{%
\omega }g)\}=\nu (\omega )g-K_{\omega }g, \\
\Gamma (g_{1},g_{2}) &=&\frac{1}{\sqrt{\omega }}Q(\sqrt{\omega }g_{1},\sqrt{%
\omega }g_{2}).
\end{eqnarray*}%
We recall $L\geq 0$ and the null space of $L$ is generated by
\begin{equation*}
\begin{split}
\chi _{0}(v)& \equiv \frac{1}{\sqrt{\rho _{0}}}\sqrt{\omega }, \\
\chi _{i}(v)& \equiv \frac{v^{i}-u_{0}^{i}}{\sqrt{\rho _{0}\theta _{0}}}%
\sqrt{\omega },\text{ for }i=1,2,3, \\
\chi _{4}(v)& \equiv \frac{1}{\sqrt{6\rho _{0}}}\{\frac{|v-u_{0}|^{2}}{%
\theta _{0}}-3\}\sqrt{\omega }.
\end{split}%
\end{equation*}%
One can easily check $\langle \chi _{i},\chi _{j}\rangle =\delta _{ij}$ for $%
0\leq i,j\leq 4$. We also define the collision frequency $\nu $:
\begin{equation*}
\nu (t,x,v)=\int_{\mathbb{R}^{3}}|v-v^{\ast }|\omega (v^{\ast })dv^{\ast }.
\end{equation*}%
We shall use $\Vert \cdot \Vert$ and $\Vert \cdot \Vert _{\nu }$ to
denote $L^{2}$ norms corresponding to $\langle \cdot ,\cdot \rangle $ and $%
\langle \cdot ,\cdot \rangle _{\nu }$. We define $\mathbf{P}$ as the $L^{2}(%
\mathbb{R}_{v}^{3})$ orthogonal projection on this null space. Then we have
\begin{equation*}
\langle Lg,g\rangle \geq \delta _{0}\Vert \{\mathbf{I-P}\}g\Vert _{\nu }^{2}
\end{equation*}%
for some positive constant $\delta _{0}>0$. Write
\begin{equation}
F_{R}^{\varepsilon }=\sqrt{\omega }f^{\varepsilon }.  \label{f}
\end{equation}%
By introducing a global Maxwellian
\begin{equation*}
\omega _{M}=\frac{1}{(2\pi \theta _{M})^{3/2}}\exp \left\{ -\frac{|v|^{2}}{%
2\theta _{M}}\right\} ,
\end{equation*}%
where $\theta _{M}=K\bar{\rho}^{3/2}$, we further define
\begin{equation}
F_{R}^{\varepsilon }=\{1+|v|^{2}\}^{-\beta }\sqrt{\omega _{M}}h^{\varepsilon
}\equiv \frac{\sqrt{\omega _{M}}}{w(v)}h^{\varepsilon }  \label{h}
\end{equation}%
where $w(v)\equiv \{1+|v|^{2}\}^{\beta }$ for some $\beta \geq 7/2.$

It is
standard to construct the coefficients $F_{i}$ ($1\leq i\leq 2k-1$), and the
key is to control the remainder $F_{R}^{\varepsilon }$ in the nonlinear
dynamics. We now state the main result of this article.

\begin{theorem}
\label{theorem2} Let $F_{0}=\omega $ as in (\ref{LM}) and let $u_{0}(0,x)$
and $\rho _{0}(0,x)$ satisfy
\begin{equation*}
\nabla \times u_{0}(0,x)=0\text{ \textrm{(irrotationality)}}\,,\text{ \ \ \
\ \ \ }\int_{\mathbb{R}^{3}}\{\rho _{0}(0,x)-\bar{\rho}\}dx=0\text{
(neutrality)}\,,
\end{equation*}%
be smooth, and $u_{0}(0,x)$ and $\rho _{0}(0,x)-\bar{\rho}$ be sufficiently
small such that global solution $[u_{0}(t,x)$, $\rho _{0}(t,x)]$ to the
Euler-Poisson equations \eqref{ep} can be constructed as in \cite{g1}. Then
for the remainder $F_{R}^{\varepsilon }$ in (\ref{exp}) there exist an $%
\varepsilon _{0}>0$ and a constant $C>0$ independent of $\varepsilon $ such
that for $0\leq \varepsilon \leq \varepsilon _{0},$
\begin{equation*}
\begin{split}
& \sup_{0\leq t\leq \varepsilon ^{-m}}\{\varepsilon ^{\frac{3}{2}}\Vert
\frac{\{1+|v|^{2}\}^{\beta +1}F_{R}^{\varepsilon }(t)}{\sqrt{\omega _{M}}}%
\Vert _{\infty }+\varepsilon ^{\frac{3}{2}}\Vert \nabla \phi
_{R}^{\varepsilon }\Vert _{\infty }\} \\
& +\sup_{0\leq t\leq \varepsilon ^{-m}}\{\varepsilon ^{5}\Vert \nabla _{x,v}(%
\frac{\{1+|v|^{2}\}^{\beta }F_{R}^{\varepsilon }(t)}{\sqrt{\omega _{M}}}%
)\Vert _{\infty }\}+\varepsilon ^{5}\Vert \nabla _{x}\nabla _{x}\phi
_{R}^{\varepsilon }\Vert _{\infty }\}+\sup_{0\leq t\leq \varepsilon
^{-m}}\{\Vert \frac{F_{R}^{\varepsilon }(t)}{\sqrt{\omega }}\Vert +\Vert
\nabla \phi _{R}^{\varepsilon }\Vert \} \\
& \leq C\{\varepsilon ^{\frac{3}{2}}\Vert \frac{\{1+|v|^{2}\}^{\beta
+1}F_{R}^{\varepsilon }(0)}{\sqrt{\omega _{M}}}\Vert _{\infty }+\varepsilon
^{5}\Vert \nabla _{x,v}(\frac{\{1+|v|^{2}\}^{\beta }F_{R}^{\varepsilon }(0)}{%
\sqrt{\omega _{M}}})\Vert _{\infty }+\Vert \frac{F_{R}^{\varepsilon }(0)}{%
\sqrt{\omega (0)}}\Vert +\Vert \nabla \phi _{R}^{\varepsilon }(0)\Vert +1\}
\end{split}%
\end{equation*}%
for all $0<m\leq \frac{1}{2}\frac{2k-3}{2k-2}$.
\end{theorem}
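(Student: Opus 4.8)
The strategy is the classical nonlinear energy method for the remainder, combining an $L^2$ coercivity estimate coming from the linearized Boltzmann operator $L$ with an $L^\infty$ bootstrap carried out along trajectories of the transport operator $\partial_t + v\cdot\nabla_x + \nabla_x\phi_0\cdot\nabla_v$. First I would rewrite the remainder equation \eqref{F_R} in terms of $f^\varepsilon=F_R^\varepsilon/\sqrt{\omega}$ and derive the basic $L^2$ energy identity: multiply by $f^\varepsilon$, integrate in $x,v$, and use $\langle Lg,g\rangle\geq\delta_0\|\{\mathbf I-\mathbf P\}g\|_\nu^2$ together with the Poisson coupling $\Delta\phi_R^\varepsilon=\int F_R^\varepsilon\,dv$ (which, after an integration by parts, produces the electric energy $\tfrac12\frac{d}{dt}\|\nabla\phi_R^\varepsilon\|^2$ on the left-hand side). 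The $\frac1\varepsilon L$ term gives the good dissipative contribution $\frac{\delta_0}{\varepsilon}\|\{\mathbf I-\mathbf P\}f^\varepsilon\|_\nu^2$; the remaining contributions are the streaming terms involving $\nabla_v$ of the background $F_0$ and $F_i$, the $\varepsilon^{i-1}\Gamma(F_i,\cdot)$ terms, the genuinely nonlinear $\varepsilon^{k-1}\Gamma(f^\varepsilon,f^\varepsilon)$ term, the field-coupling terms $\varepsilon^i\nabla\phi_i\cdot\nabla_v$ and $\varepsilon^k\nabla\phi_R^\varepsilon\cdot\nabla_v f^\varepsilon$, and the source $\varepsilon^{k-1}A$. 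Each of the linear-in-$f^\varepsilon$ error terms is estimated by splitting $f^\varepsilon=\mathbf Pf^\varepsilon+\{\mathbf I-\mathbf P\}f^\varepsilon$; the hydrodynamic part $\mathbf Pf^\varepsilon$ is controlled by the macroscopic fields (density/velocity/temperature of $f^\varepsilon$), for which one needs the standard local conservation laws / macroscopic equations derived by testing \eqref{F_R} against $\chi_0,\dots,\chi_4$.

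The key point that makes the $L^2$ estimate close despite the loss of a factor $\varepsilon^{-1}$ in front of $L$ is that the dangerous streaming term $\nabla_v F_0$, when paired against $\mathbf Pf^\varepsilon$, must be shown to vanish (or be higher order) by the very fact that $\rho_0,u_0,\theta_0$ solve the Euler–Poisson system \eqref{ep}; this is the analogue of the classical cancellation in Hilbert expansion arguments and it reduces the genuinely problematic interaction to one between a macroscopic quantity and $\{\mathbf I-\mathbf P\}f^\varepsilon$, which can then be absorbed into $\frac{\delta_0}{\varepsilon}\|\{\mathbf I-\mathbf P\}f^\varepsilon\|_\nu^2$ at the cost of a factor $\varepsilon$. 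Summing and using $k\geq6$ to guarantee all $\varepsilon$-powers in the genuinely nonlinear and field-nonlinear terms are favorable, I would arrive at a differential inequality of Grönwall type,
\begin{equation*}
\frac{d}{dt}\Big\{\Big\|\tfrac{F_R^\varepsilon}{\sqrt\omega}\Big\|^2+\|\nabla\phi_R^\varepsilon\|^2\Big\}+\frac{\delta_0}{2\varepsilon}\|\{\mathbf I-\mathbf P\}f^\varepsilon\|_\nu^2\ \leq\ C\Big\{\Big\|\tfrac{F_R^\varepsilon}{\sqrt\omega}\Big\|^2+\|\nabla\phi_R^\varepsilon\|^2\Big\}+\varepsilon^{2k-2}\big(\text{higher order in }\|h^\varepsilon\|_\infty\big)+C,
\end{equation*}
valid under an a priori smallness/boundedness assumption on the weighted $L^\infty$ norms of $h^\varepsilon$; integrating over $0\le t\le \varepsilon^{-m}$ with $m\le\tfrac12\tfrac{2k-3}{2k-2}$ keeps the accumulated factor $e^{Ct}$ and the $\varepsilon$-powers in balance so that the $L^2$ part of the conclusion holds.

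For the $L^\infty$ bounds I would pass to $h^\varepsilon=w(v)F_R^\varepsilon/\sqrt{\omega_M}$ (with weight $w=(1+|v|^2)^\beta$, $\beta\ge7/2$) and run Guo's $L^\infty$-via-$L^2$ iteration: write the equation for $h^\varepsilon$ as a damped transport equation $\{\partial_t+v\cdot\nabla_x+\nabla_x\phi_0\cdot\nabla_v+\frac{\nu}{\varepsilon}+\cdots\}h^\varepsilon=\frac1\varepsilon K_w h^\varepsilon+(\text{lower order})$, apply Duhamel along the characteristics of $\partial_t+v\cdot\nabla_x+\nabla_x\phi_0\cdot\nabla_v$, and iterate the Duhamel formula twice. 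The gain $e^{-\nu(t-s)/\varepsilon}$ along trajectories beats the $\frac1\varepsilon$ loss, and the double iteration converts the $\frac1\varepsilon K_w$ term, after a change of variables in the collision kernel, into a space–time integral of $h^\varepsilon$ over a region of small measure, which is dominated by $\varepsilon^{-3/2}$ times the $L^2$ norm $\|f^\varepsilon\|$ already controlled (this is the source of the weights $\varepsilon^{3/2}$ in the statement). The field term $\varepsilon^k\nabla\phi_R^\varepsilon\cdot\nabla_v F_R^\varepsilon$ requires controlling $\|\nabla\phi_R^\varepsilon\|_\infty$, handled by elliptic regularity for $\Delta\phi_R^\varepsilon=\int F_R^\varepsilon\,dv$ together with a Sobolev embedding fed by the $L^2$ and $L^\infty$ bounds on $F_R^\varepsilon$; the first derivative estimates (the $\varepsilon^5\|\nabla_{x,v}(\cdots)\|_\infty$ and $\varepsilon^5\|\nabla^2\phi_R^\varepsilon\|_\infty$ terms) are obtained by differentiating the $h^\varepsilon$-equation once and repeating the characteristic/Duhamel argument, now with a worse but still admissible power of $\varepsilon$. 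Finally I would close the bootstrap: the a priori assumption used in the $L^2$ step is recovered from the $L^\infty$ step, and the a priori $L^\infty$ smallness is recovered from the combined estimate, provided $\varepsilon\le\varepsilon_0$ and $t\le\varepsilon^{-m}$.

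The main obstacle, and the place where most of the work concentrates, is the interplay of the three scales — the $\frac1\varepsilon$ in front of $L$ (and of $\nu$ in the $L^\infty$ scheme), the time horizon $\varepsilon^{-m}$, and the amplitude weights $\varepsilon^{3/2},\varepsilon^5$ — together with the fact that the background is a \emph{time-dependent} local Maxwellian whose decay in $t$ comes only from the dispersive decay of the Euler–Poisson solution constructed in \cite{g1}. Concretely: (i) verifying the Euler–Poisson cancellation precisely enough that the $\nabla_v F_0$ streaming term against $\mathbf Pf^\varepsilon$ is genuinely $O(\varepsilon)$ and does not destroy the Grönwall balance over the long time $\varepsilon^{-m}$; (ii) making the $L^\infty$ double-Duhamel iteration work uniformly in the long-time regime — one must track that the exponentially small factors $e^{-\nu(t-s)/\varepsilon}$ localize the $s$-integration to an interval of length $O(\varepsilon)$ so that the accumulated growth over $[0,\varepsilon^{-m}]$ stays polynomial in $\varepsilon^{-1}$ and is absorbed by the prefactors; and (iii) handling $K_\omega$ rather than $K_{\omega_M}$ — since $\omega$ is a local Maxwellian, one compares it to the global $\omega_M$ and absorbs the (small, by the smallness of $u_0,\rho_0-\bar\rho$) discrepancy, which is exactly why the smallness hypothesis on the Euler–Poisson data is needed not just for global existence of the hydrodynamic profile but also for the remainder estimate itself.
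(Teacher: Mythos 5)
Your overall architecture---an $L^{2}$ energy estimate closed by a weighted $L^{\infty}$ bound obtained from a twice-iterated Duhamel formula along the curved characteristics, plus a $W^{1,\infty}$ estimate and a bootstrap---is the same as the paper's. However, two of the steps you lean on would fail as described.

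First, the hardest term in the $L^{2}$ estimate, namely $\tfrac12(\partial_t+v\cdot\nabla_x)\theta_0\,f^{\varepsilon}-\theta_0\frac{\{\partial_t+v\cdot\nabla_x+\nabla_x\phi_0\cdot\nabla_v\}\sqrt{\omega}}{\sqrt{\omega}}f^{\varepsilon}$ paired with $f^{\varepsilon}$, is a quadratic form in $f^{\varepsilon}$ weighted by a cubic polynomial in $v$, and it is \emph{not} removed by the Euler--Poisson cancellation you invoke. What the $\varepsilon^{0}$ equation in \eqref{coeff} gives is that $\mathbf{P}\bigl(\{\partial_t+v\cdot\nabla_x+\nabla_x\phi_0\cdot\nabla_v\}\omega/\sqrt{\omega}\bigr)=0$, i.e.\ the first five moments of the cubic weight against $\omega$ vanish; but the pairing against $|\mathbf{P}f^{\varepsilon}|^{2}$ involves moments of order up to seven, which do not vanish. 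The paper handles this term by splitting the velocity integral at $|v|=\kappa/\sqrt{\varepsilon}$: for large $v$ one trades the cubic weight for the strong polynomial weight $w(v)=(1+|v|^{2})^{\beta}$, $\beta\ge 7/2$, carried by $h^{\varepsilon}$, gaining the factor $\varepsilon^{2}\Vert h^{\varepsilon}\Vert_{\infty}\Vert f^{\varepsilon}\Vert$; for $|v|\le\kappa/\sqrt{\varepsilon}$ the $\{\mathbf{I}-\mathbf{P}\}$ part is absorbed into the dissipation at cost $\kappa^{2}/\varepsilon$ and the $\mathbf{P}$ part is bounded by $(1+t)^{-p}\Vert f^{\varepsilon}\Vert^{2}$ using the dispersive decay \eqref{decay}. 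This velocity splitting is precisely why the weighted $L^{\infty}$ norm must enter the $L^{2}$ step, and it is missing from your plan.

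Second, your Gr\"onwall step cannot be allowed to produce a factor $e^{Ct}$: at $t=\varepsilon^{-m}$ this is $e^{C\varepsilon^{-m}}$ and destroys every power of $\varepsilon$. The estimate closes only because the coefficient of $\Vert f^{\varepsilon}\Vert^{2}+\Vert\nabla\phi_R^{\varepsilon}\Vert^{2}$ is integrable on $[0,\varepsilon^{-m}]$ with a bounded integral; this requires (i) the decay $(1+t)^{-p}$, $p>1$, of the Euler--Poisson background, and (ii) the polynomial growth bounds $|F_i|\le C(1+t)^{i-1}(1+|v|^{3i})\omega$ of Lemma \ref{regularity2}, so that the quantities $\mathcal{I}_1,\mathcal{I}_2$ of \eqref{I} satisfy $\mathcal{I}_1 t\varepsilon+\mathcal{I}_2 t\varepsilon^{k-1}\le C\varepsilon^{\frac12-m}$. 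It is exactly this computation that produces the threshold $m\le\frac12\frac{2k-3}{2k-2}$, which your argument never derives. Relatedly, the $W^{1,\infty}$ estimate is not an optional refinement: the $C^{1,\alpha}$ bound on $\nabla\phi^{\varepsilon}$ (obtained from $\Vert h^{\varepsilon}\Vert_{W^{1,\infty}}$ by elliptic regularity) is what guarantees the Jacobian lower bound $|\det(\partial X(s_1)/\partial v')|\gtrsim\varepsilon^{3}$ for the change of variables in the double Duhamel iteration, and differentiating that change of variables in the $W^{1,\infty}$ step forces the $L^{2}$ control of $\partial_x\partial_v X$ in Lemma \ref{elr}. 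These are the pieces that must be supplied before the plan becomes a proof.
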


\begin{remark}
While we get a uniform in $\varepsilon $ estimate for the $L^{2}$ norm of
the remainders, for the weighted $W^{1,\infty }$ norms, we only obtain a
uniform estimate of $\varepsilon ^{5}F_{R}^{\varepsilon }$ and $\varepsilon
^{5}\nabla _{x}\phi _{R}^{\varepsilon }$, which is why we need higher order
expansion $k\geq 6$ in \eqref{exp}. With these higher order Hilbert
expansions, our uniform estimates lead to the Euler-Poisson limit $%
\;\sup_{0\leq t\leq \varepsilon ^{-m}}||F^{\varepsilon }-\omega
||=O(\varepsilon )$ for all $0<m\leq \frac{1}{2}\frac{2k-3}{2k-2}$.
\end{remark}

Our result provides a rare example such that the Hilbert expansion is valid
for all time. In the absence of the electrostatic interaction, it is
well-known \cite{C} that similar Hilbert expansion is only valid local in
time, before shock formations in the pure compressible Euler flow, for
example, see \cite{M}. By a classical result \cite{si}, it is well-known
that even for arbitrary small perturbations of a motionless steady state,
singularity does form in finite time for the Euler system for a compressible
fluid. In contrast, the validity time in the Euler-Poisson limit is $%
\varepsilon ^{-\frac{1}{2}\frac{2k-3}{2k-2}\text{ \ }}$ for irrotational
flow, which implies global in-time convergence from the
Vlasov-Poisson-Boltzmann to the Euler-Poisson system (\ref{ep}). The key
difference, in the presence of electrostatic interaction, is that small
irrotational flows exist forever without any shock formation for the
Euler-Poisson (\ref{ep}), see \cite{g1}. Such a surprising result is due to extra
dispersive effect in the presence of a self-consistent electric field, which
is characterized by so-called `plasma frequency' in the physics literature.
This leads to a `Klein-Gordon effect' which enhances the linear decay rate
and destroys the possible shock formation.

Our method of proof relies on a recent $L^{2}-L^{\infty }$ approach
to study the Euler limit of the Boltzmann equation \cite{gjj,gjj2}.
The improvement over the classical Caflisch's paper is that now the
positivity of the initial datum can be guaranteed. The main idea of
our approach is to use the natural $L^{2}$ energy estimate as the
first step. The most difficult term in the energy estimate is
\begin{equation*}
\frac{1}{2}(\partial _{t}+v\cdot \nabla _{x})\theta _{0}f^{\varepsilon
}-\theta _{0}\frac{\{\partial _{t}+v\cdot \nabla _{x}+\nabla _{x}\phi
_{0}\cdot \nabla _{v}\}\sqrt{\omega }}{\sqrt{\omega }}f^{\varepsilon },
\end{equation*}%
which involves cubic power of $|v|,$ and it is hard to control by an $L^{2}$
type of norm only. We introduce a new weighted $L^{\infty }$ space to
control such a term. The second step is to estimate such a weighted $%
L^{\infty }$ norm along the trajectory, based on the $L^{2}$
estimate in the first step, but with a singular negative power of
$\varepsilon $. Such a simple interplay between $L^{2}$ and
$L^{\infty }$ norms fails to yield a closed estimate in our study,
unlike compressible Euler limit \cite{gjj2}. The new analytical
difficulty to overcome in the present work is a delicate point-wise
estimate of the distribution function in the presence of a curved
the trajectory caused by the self-consistent electric field. It
turns out
that, due to the Poisson coupling, we need to further estimate the $%
W^{1,\infty }$ norm along the trajectory to close our estimate. This
requires higher expansion (\ref{exp}) to compensate more singular power of $%
\varepsilon $ for the $W^{1,\infty }$ estimate. In order to obtain the
uniform estimate over the time scale of $\varepsilon ^{-\frac{1}{2}\frac{2k-3%
}{2k-2}}\,,$ we must carefully analyze the decay and growth of the
coefficients $F_{j}.$

Recently, there has been quite some mathematical study of the
Vlasov-Poisson-Boltzmann system: (\ref{vpb}) for $\varepsilon=1$. Among
others, global solutions near a Maxwellian were constructed \cite{g4} in a
periodic box. In \cite{TYarma,TYcmp}, global solutions near a Maxwellian
were constructed for the whole space. In \cite{St}, a self-consistent
magnetic effect were also included.

Our paper is organized as follows. In section 2, we construct the
coefficients $F_{i}$ for the Hilbert expansion (\ref{exp}), starting with
the global smooth irrotational solution to the Euler-Poisson system (\ref{ep}%
) constructed in \cite{g1}. In particular, we study carefully the growth in
time $t$ for $F_{i}$. In section 3, we use the $L^{2}$ energy estimate for
the remainder $F_{R}^{\varepsilon }$ around the local Maxwellian $F_{0}$ (%
\ref{LM}), in terms of the weighted $L^{\infty }$ norm of $h^{\varepsilon }.$
Section 4 is a study of the curved trajectory. Section 5 is the main
technical part of the paper, in which $L^{\infty }$ and $W^{1,\infty }\,$
norms of $h^{\varepsilon }$ are estimated along the curved trajectories in
terms of the $L^{2}$ energy to close the whole argument. Section 6 is a
direct proof of our main theorem based on the $L^{2}-L^{\infty }$ estimates.
Throughout this paper, we use $C$ to denote possibly different constants but
independent of $t$ and $\varepsilon $.

\section{Coefficients of the Hilbert Expansion}

In this section, we discuss the existence and regularity of $F_{i}$,
inherited from $F_{0}=\omega $ as defined in \eqref{LM}. Write $\frac{F_{i}}{%
\sqrt{\omega }}$ as the sum of macroscopic and microscopic parts as follows:
for each $i\geq 1$,
\begin{equation}
\begin{split}
\frac{F_{i}}{\sqrt{\omega }}& =\mathbf{P}(\frac{F_{i}}{\sqrt{\omega }})+\{%
\mathbf{I-P}\}(\frac{F_{i}}{\sqrt{\omega }}) \\
& \equiv \left\{ \frac{\rho _{i}}{\sqrt{\rho _{0}}}\chi _{0}+\sum_{j=1}^{3}%
\sqrt{\frac{\rho _{0}}{\theta _{0}}}u_{i}^{j}\cdot {\chi _{j}}+\sqrt{\frac{%
\rho _{0}}{6}}\frac{\theta _{i}}{\theta _{0}}\chi _{4}\right\} +\{\mathbf{I-P%
}\}(\frac{F_{i}}{\sqrt{\omega }}).
\end{split}
\label{F_1}
\end{equation}%
$F_{i}$'s will be constructed inductively as follows:

\begin{lemma}
For each given nonnegative integer $k$, assume $F_{k}$'s are found. Then the
microscopic part of $\frac{F_{k+1}}{\sqrt{\omega }}$ is determined through
the equation for $F_{k}$ in \eqref{coeff}:
\begin{equation*}
\{\mathbf{I-P}\}(\frac{F_{k+1}}{\sqrt{\omega }})=L^{-1}(-\frac{\{\partial
_{t}+v\cdot \nabla _{x}\}F_{k}+\sum_{\substack{ i+j=k \\ i,j\geq 0}}\nabla
_{x}\phi _{i}\cdot \nabla _{v}F_{j}-\sum_{\substack{ i+j=k+1 \\ i,j\geq 1}}%
Q(F_{i},F_{j})}{\sqrt{\omega }})\,.
\end{equation*}%
For the macroscopic part, $\rho _{k+1},u_{k+1},\theta _{k+1}$ satisfy the following:
\begin{equation}
\begin{split}\label{eqF_k}
& \partial _{t}\rho _{k+1}+\nabla \cdot (\rho _{0}u_{k+1}+\rho
_{k+1}u_{0})=0, \\
\ & \rho _{0}\{\partial _{t}u_{k+1}+(u_{k+1}\cdot \nabla )u_{0}+(u_{0}\cdot
\nabla )u_{k+1}-\nabla \phi _{k+1}\}-\frac{\rho _{k+1}}{\rho _{0}}\nabla
(\rho _{0}\theta _{0})+\nabla (\frac{\rho _{0}\theta _{k+1}+3\theta _{0}\rho
_{k+1}}{3})=f_{k}, \\
\ & \rho _{0}\{\partial _{t}\theta _{k+1}+\frac{2}{3}(\theta _{k+1}\nabla
\cdot u_{0}+3\theta _{0}\nabla \cdot u_{k+1})+u_{0}\cdot \nabla \theta
_{k+1}+3u_{k+1}\cdot \nabla \theta _{0}\}=g_{k}, \\
\ & \Delta \phi _{k+1}=\rho _{k+1},
\end{split}%
\end{equation}%
where
\begin{equation*}
\begin{split}
f_{k}& =-\partial _{j}\int \{(v^{i}-u_{0}^{i})(v^{j}-u_{0}^{j})-\delta _{ij}%
\frac{|v-u_{0}|^{2}}{3}\}F_{k+1}dv+\sum_{\substack{ i+j=k+1 \\ i,j\geq 1}}%
\rho _{j}\nabla _{x}\phi _{i} \\
g_{k}& =-\partial _{i}\{\int (v^{i}-u_{0}^{i})(|v-u_{0}|^{2}-5\theta
_{0})F_{k+1}dv+2u_{0}^{j}\int \{(v^{i}-u_{0}^{i})(v^{j}-u_{0}^{j})-\delta
_{ij}\frac{|v-u_{0}|^{2}}{3}\}F_{k+1}dv\} \\
& \quad -2u_{0}\cdot f_{k}+\sum_{\substack{ i+j=k+1 \\ i,j\geq 1}}(\rho
_{0}u_{j}+\rho _{j}u_{0})\nabla _{x}\phi _{i}
\end{split}%
\end{equation*}%
Here we use the subscript $k$ for forcing terms $f$ and $g$ in order to emphasize that
the right hand sides depend only on $F_{i}$'s and $\nabla _{x}\phi _{i}$'s
for $0\leq i\leq k$.
\end{lemma}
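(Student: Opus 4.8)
The statement simply re-reads the $\varepsilon^{k}$ and $\varepsilon^{k+1}$ rows of \eqref{coeff}, so the plan is an elementary rearrangement followed by the classical Grad moment computation. First I would rewrite the $\varepsilon^{k}$ equation: isolating $Q(F_0,F_{k+1})+Q(F_{k+1},F_0)=-\sqrt{\omega}\,L(F_{k+1}/\sqrt{\omega})$ inside $\sum_{i+j=k+1}Q(F_i,F_j)$, while merging $\nabla_x\phi_0\cdot\nabla_vF_k$, $\nabla_x\phi_k\cdot\nabla_vF_0$ with the lower force sum $\sum_{i+j=k,\,i,j\ge1}\nabla_x\phi_i\cdot\nabla_vF_j$, puts that row in the form
\[
\sqrt{\omega}\,L(\tfrac{F_{k+1}}{\sqrt{\omega}})=-\Big[\{\partial_t+v\cdot\nabla_x\}F_k+\sum_{i+j=k,\;i,j\ge 0}\nabla_x\phi_i\cdot\nabla_vF_j-\sum_{i+j=k+1,\;i,j\ge 1}Q(F_i,F_j)\Big].
\]
The right-hand side involves only $F_0,\dots,F_k$ and the potentials $\phi_0,\dots,\phi_k$, which are fixed by $\Delta\phi_i=\rho_i$, so it is a known quantity. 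Pairing the identity with $\chi_0/\sqrt\omega,\dots,\chi_4/\sqrt\omega$ (polynomials of degree $\le 2$) annihilates the left side, since $\langle Lg,\psi\rangle=0$ for the collision invariants $\psi$, and kills the $Q$-sum as well; what remains is precisely the macroscopic system for $\rho_k,u_k,\theta_k$, which is part of the induction hypothesis that $F_k$ has been found. Hence the bracket lies in the range of $L$, i.e. it is $\{\mathbf{I-P}\}$ of itself, and applying the bounded pseudo-inverse of $L$ (available by $\langle Lg,g\rangle\ge\delta_0\|\{\mathbf{I-P}\}g\|_\nu^2$ and the Fredholm property) yields the stated formula for $\{\mathbf{I-P}\}(F_{k+1}/\sqrt{\omega})$.

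For the macroscopic part I would test the $\varepsilon^{k+1}$ row of \eqref{coeff} directly against $1,\,v,\,\tfrac{|v|^2}{2}$ — this is exactly the solvability condition for $F_{k+2}$, every $Q(F_i,F_j)$ integrating to zero against these. From $\partial_tF_{k+1}+v\cdot\nabla_xF_{k+1}$ I obtain $\partial_t$ of the low moments $\int F_{k+1}\,dv=\rho_{k+1}$, $\int vF_{k+1}\,dv=\rho_0u_{k+1}+\rho_{k+1}u_0$, $\int\tfrac{|v|^2}{2}F_{k+1}\,dv$, plus $\nabla_x\!\cdot$ of the stress and energy-flux moments; inside each higher moment I substitute $v=u_0+(v-u_0)$ and split off the part lying in $\mathrm{span}\{\chi_0,\dots,\chi_4\}$ — which assembles into the Euler fluxes linearized in $(\rho_{k+1},u_{k+1},\theta_{k+1})$ about $(\rho_0,u_0,\theta_0)$ — from the Burnett pieces $\int\{(v^i-u_0^i)(v^j-u_0^j)-\delta_{ij}\tfrac{|v-u_0|^2}{3}\}F_{k+1}\,dv$ and $\int(v^i-u_0^i)(|v-u_0|^2-5\theta_0)F_{k+1}\,dv$; being orthogonal to every $\chi_j$, these depend on $F_{k+1}$ only through $\{\mathbf{I-P}\}(F_{k+1}/\sqrt{\omega})$, hence by the first part only through $F_0,\dots,F_k$. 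Integrating the Vlasov terms by parts in $v$, $\nabla_x\phi_0\cdot\nabla_vF_{k+1}$ and $\nabla_x\phi_{k+1}\cdot\nabla_vF_0$ contribute $-\rho_{k+1}\nabla\phi_0$, $-\rho_0\nabla\phi_{k+1}$ (and the associated work terms in the energy balance), while $\sum_{i+j=k+1}\nabla_x\phi_i\cdot\nabla_vF_j$ gives the sums $\sum\rho_j\nabla\phi_i$ and $\sum(\rho_0u_j+\rho_ju_0)\nabla\phi_i$ in $f_k,g_k$. This produces the conservative form of \eqref{eqF_k}; rewriting it in the nonconservative variables $\rho_{k+1},u_{k+1},\theta_{k+1}$ is done exactly as \eqref{E_P} was recast into the $\rho_0,u_0,\theta_0$ form, using the $F_0$-equations \eqref{ep} to trade $\nabla(\rho_0\theta_0)$, $\partial_t\rho_0$, etc.; the line $\Delta\phi_{k+1}=\rho_{k+1}$ is copied directly from \eqref{coeff}.

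The main obstacle is the bookkeeping in the second step: one must verify that after the $v=u_0+(v-u_0)$ splitting the macroscopic contributions collapse exactly to the linear operator on the left of \eqref{eqF_k}, and that no term carrying the still-unknown $\mathbf{P}(F_{k+1}/\sqrt{\omega})$ survives — every $F_{k+1}$-dependence must be absorbed into the two Burnett integrals of $f_k,g_k$. This uses only the normalization $\langle\chi_i,\chi_j\rangle=\delta_{ij}$ and the explicit Gaussian moments of $\omega$, but it is the step where a sign or a trace term could slip. Granting it, the assertion that $f_k,g_k$ depend only on $F_0,\dots,F_k$ is automatic, since those Burnett integrals are moments of $L^{-1}$ applied to the already-known source of the first part; and no separate solvability hypothesis is needed, because at every order the solvability condition is just the (already solved) lower-order macroscopic system, so the induction closes on itself.
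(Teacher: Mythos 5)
Your proposal is correct and follows essentially the same route as the paper: invert $L$ on the $\varepsilon^{k}$ equation for the microscopic part, then project the $\varepsilon^{k+1}$ equation onto the collision invariants, split the moments via $v=u_{0}+(v-u_{0})$ so that only the Burnett integrals carry the $F_{k+1}$-dependence, and pass to nonconservative variables using the $F_{0}$-equations. Your explicit check that the solvability condition for $L^{-1}$ is exactly the previously solved macroscopic system is left implicit in the paper but is a correct and welcome addition.
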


\begin{proof}\textit{of Lemma 2.1:} We shall only derive the equations
for $F_{1}$. From the coefficient of $\varepsilon ^{0}$ in \eqref{coeff},
the microscopic part of $F_{1}$ should be
\begin{equation}
\{\mathbf{I-P}\}(\frac{F_{1}}{\sqrt{\omega }})=L^{-1}(-\frac{\{\partial
_{t}+v\cdot \nabla _{x}+\nabla _{x}\phi _{0}\cdot \nabla _{v}\}\omega }{%
\sqrt{\omega }})\,.  \label{microF_1}
\end{equation}%
Since $L^{-1}$ preserves decay in $v$,
\begin{equation}
|\{\mathbf{I-P}\}(\frac{F_{1}}{\sqrt{\omega }})|\leq (\Vert \partial \rho
_{0}\Vert _{\infty }+\Vert \partial u_{0}\Vert _{\infty }+\Vert \partial
\theta _{0}\Vert _{\infty }+\Vert \nabla \phi _{0}\Vert _{\infty
})(1+|v|^{3})\sqrt{\omega }\,,  \label{micro}
\end{equation}%
where $\partial $ is either $\partial _{t}$ or $\nabla _{x}$. For
macroscopic variables $\rho _{1},u_{1},\theta _{1}$ of $F_{1}$ in (\ref{F_1}%
), note that
\begin{equation*}
\begin{split}
& \int F_{1}dv=\rho _{1},\quad \int (v-u_{0})F_{1}dv=\rho _{0}u_{1},\quad
\int vF_{1}dv=\rho _{0}u_{1}+\rho _{1}u_{0}, \\
& \int |v-u_{0}|^{2}F_{1}dv=\int (|v-u_{0}|^{2}-3\theta _{0})F_{1}dv+3\theta
_{0}\rho _{1}=\rho _{0}\theta _{1}+3\theta _{0}\rho _{1}, \\
& \int |v|^{2}F_{1}dv=\int |v-u_{0}+u_{0}|^{2}F_{1}dv=\rho _{0}\theta
_{1}+3\theta _{0}\rho _{1}+\rho _{1}|u_{0}|^{2}+2\rho _{0}u_{0}\cdot u_{1},\\
& \int v^{i}v^{j}F_{1}dv=\int \{(v^{i}-u_{0}^{i})(v^{j}-u_{0}^{j})-\delta
_{ij}\frac{|v-u_{0}|^{2}}{3}\}F_{1}dv \\
& \quad \quad \quad \quad \quad \quad +\rho _{0}u_{0}^{i}u_{1}^{j}+\rho
_{0}u_{0}^{j}u_{1}^{i}+u_{0}^{i}u_{0}^{j}\rho _{1}+\delta _{ij}\frac{\rho
_{0}\theta _{1}+3\theta _{0}\rho _{1}}{3}, \\
& \int v^{i}|v|^{2}F_{1}dv=\int (v^{i}-u_{0}^{i})|v|^{2}F_{1}dv+(\rho
_{0}\theta _{1}+3\theta _{0}\rho _{1}+\rho _{1}|u_{0}|^{2}+2\rho
_{0}u_{0}\cdot u_{1})u_{0}^{i} \\
& \quad \quad =\int (v^{i}-u_{0}^{i})(|v-u_{0}|^{2}-5\theta
_{0})F_{1}dv+2u_{0}^{j}\int \{(v^{i}-u_{0}^{i})(v^{j}-u_{0}^{j})-\delta _{ij}%
\frac{|v-u_{0}|^{2}}{3}\}F_{1}dv \\
& \quad \quad \quad +(5\theta _{0}+|u_{0}|^{2})\rho _{0}u_{1}^{i}+\frac{2}{3}%
(\rho _{0}\theta _{1}+3\theta _{0}\rho _{1})u_{0}^{i}+(\rho _{0}\theta
_{1}+3\theta _{0}\rho _{1}+\rho _{1}|u_{0}|^{2}+2\rho _{0}u_{0}\cdot
u_{1})u_{0}^{i}.
\end{split}%
\end{equation*}%
Project the equation for $F_{1}$ in \eqref{coeff} onto $1,v,|v|^{2}$ to get
equations of $\rho _{1},u_{1},\theta _{1}$ with forcing terms as follows:
\begin{equation*}
\begin{split}
& \partial _{t}\rho _{1}+\nabla \cdot (\rho _{0}u_{1}+\rho _{1}u_{0})=0, \\
\ & \partial _{t}(\rho _{0}u_{1}^{i}+\rho _{1}u_{0}^{i})+\partial _{j}(\rho
_{0}u_{0}^{i}u_{1}^{j}+\rho _{0}u_{0}^{j}u_{1}^{i}+\rho
_{1}u_{0}^{i}u_{0}^{j}+\delta _{ij}\frac{\rho _{0}\theta _{1}+3\theta
_{0}\rho _{1}}{3})-\rho _{1}\partial _{i}\phi _{0}-\rho _{0}\partial
_{i}\phi _{1} \\
& =-\partial _{j}\int \{(v^{i}-u_{0}^{i})(v^{j}-u_{0}^{j})-\delta _{ij}\frac{%
|v-u_{0}|^{2}}{3}\}F_{1}dv, \\
\ & \partial _{t}(\rho _{0}\theta _{1}+3\theta _{0}\rho _{1}+\rho
_{1}|u_{0}|^{2}+2\rho _{0}u_{0}\cdot u_{1}) \\
& \quad +\partial _{i}\{(5\theta _{0}+|u_{0}|^{2})\rho _{0}u_{1}^{i}+\frac{2%
}{3}(\rho _{0}\theta _{1}+3\theta _{0}\rho _{1})u_{0}^{i}+(\rho _{0}\theta
_{1}+3\theta _{0}\rho _{1}+\rho _{1}|u_{0}|^{2}+2\rho _{0}u_{0}\cdot
u_{1})u_{0}^{i}\} \\
& \quad -2\nabla _{x}\phi _{0}\cdot (\rho _{0}u_{1}+\rho _{1}u_{0})-2\nabla
_{x}\phi _{1}\cdot (\rho _{0}u_{0}) \\
& =-\partial _{i}\{\int (v^{i}-u_{0}^{i})(|v-u_{0}|^{2}-5\theta
_{0})F_{1}dv+2u_{0}^{j}\int \{(v^{i}-u_{0}^{i})(v^{j}-u_{0}^{j})-\delta _{ij}%
\frac{|v-u_{0}|^{2}}{3}\}F_{1}dv\}, \\
\ & \Delta \phi _{1}=\rho _{1}.
\end{split}%
\end{equation*}%
By using the equations for $\rho _{0},u_{0}$ and $\rho _{1}$, the equation
for $u_{1}$ can be reduced to
\begin{equation*}
\rho _{0}\{\partial _{t}u_{1}+(u_{1}\cdot \nabla )u_{0}+(u_{0}\cdot \nabla
)u_{1}-\nabla \phi _{1}\}-\frac{\rho _{1}}{\rho _{0}}\nabla (\rho _{0}\theta
_{0})+\nabla (\frac{\rho _{0}\theta _{1}+3\theta _{0}\rho _{1}}{3})=\partial
_{j}(\mu (\theta _{0})\partial _{j}u_{0}^{i})
\end{equation*}%
where
\begin{equation*}
\mu (\theta _{0})\equiv \theta _{0}\int B_{ij}\mathbf{L_{\omega }}%
^{-1}(B_{ij}\omega )dv>0\,.
\end{equation*}%
Here
\begin{equation*}
B_{ij}=\frac{(v^{i}-u_{0}^{i})(v^{j}-u_{0}^{j})}{\theta _{0}}-\delta _{ij}%
\frac{|v-u_{0}|^{2}}{3\theta _{0}},\quad \mathbf{L_{\omega }}g\equiv
-\{Q(\omega ,g)+Q(g,\omega )\}\,,
\end{equation*}%
and for the last term we have used the coefficient of $\varepsilon ^{0}$ in %
\eqref{coeff}:
\begin{equation*}
-\int B_{ij}F_{1}dv=\partial _{j}u_{0}^{i}\int B_{ij}\mathbf{L_{\omega }}%
^{-1}(B_{ij}\omega )dv\,.
\end{equation*}%
Letting
\begin{equation*}
A_{i}=\frac{(v^{i}-u_{0}^{i})}{\sqrt{\theta _{0}}}(\frac{|v-u_{0}|^{2}}{%
2\theta _{0}}-\frac{5}{2})\,,
\end{equation*}%
we obtain
\begin{equation*}
-\int A_{i}F_{1}dv=\partial _{i}\theta _{0}\int A_{i}\mathbf{L_{\omega }}%
^{-1}(A_{i}\omega )dv\,,
\end{equation*}%
and define
\begin{equation*}
\kappa (\theta _{0})\equiv 2\theta _{0}\int A_{i}\mathbf{L_{\omega }}%
^{-1}(A_{i}\omega )dv>0\,.
\end{equation*}%
Similarly, the equation for $\theta _{1}$ can be reduced to
\begin{equation*}
\rho _{0}\{\partial _{t}\theta _{1}+\frac{2}{3}(\theta _{1}\nabla \cdot
u_{0}+3\theta _{0}\nabla \cdot u_{1})+u_{0}\cdot \nabla \theta
_{1}+3u_{1}\cdot \nabla \theta _{0}\}=\nabla \cdot (\kappa (\theta
_{0})\nabla \theta _{0})+2\mu (\theta _{0})|\nabla u_{0}|^{2}.
\end{equation*}%
We rewrite the fluid equations for the first order coefficients $\rho
_{1},u_{1},\theta _{1},\phi _{1}$:
\begin{equation}
\begin{split}\label{eqF1}
& \partial _{t}\rho _{1}+\nabla \cdot (\rho _{0}u_{1}+\rho _{1}u_{0})=0, \\
\ & \rho _{0}\{\partial _{t}u_{1}+(u_{1}\cdot \nabla )u_{0}+(u_{0}\cdot
\nabla )u_{1}-\nabla \phi _{1}\}-\frac{\rho _{1}}{\rho _{0}}\nabla (\rho
_{0}\theta _{0})+\nabla (\frac{\rho _{0}\theta _{1}+3\theta _{0}\rho _{1}}{3}%
)=\partial _{j}(\mu (\theta _{0})\partial _{j}u_{0}^{i}), \\
\ & \rho _{0}\{\partial _{t}\theta _{1}+\frac{2}{3}(\theta _{1}\nabla \cdot
u_{0}+3\theta _{0}\nabla \cdot u_{1})+u_{0}\cdot \nabla \theta
_{1}+3u_{1}\cdot \nabla \theta _{0}\}=\nabla \cdot (\kappa (\theta
_{0})\nabla \theta _{0})+2\mu (\theta _{0})|\nabla u_{0}|^{2}, \\
\ & \Delta \phi _{1}=\rho _{1}.
\end{split}%
\end{equation}%
Here, $\mu (\theta _{0})$ an $\kappa (\theta _{0})$ represent the viscosity
and heat conductivity coefficients respectively. This is reminiscent of the
derivation of compressible Navier-Stokes equations from the Boltzmann
equation. We refer to \cite{bgl1} for more details. This completes the proof for $F_1$ and
higher expansion coefficients $F_k$'s can be found in the same way.
\end{proof}

Since $\rho _{1},u_{1},\theta _{1},\phi _{1}$ solve linear equations with
coefficients and forcing terms coming from the smooth functions $\rho
_{0},u_{0},\theta _{0}$, the initial value problem for \eqref{eqF1} is
well-posed in the Sobolev spaces, and moreover, we will show in Lemma \ref%
{regularity2} that
\begin{equation*}
|F_{1}(t,x,v)|\leq C(1+|v|^{3})\omega \,,
\end{equation*}%
where $C$ only depends on the regularity of $\rho _{0},u_{0},\theta
_{0},\phi _{0}$ and the given initial data $\rho _{1}(0),u_{1}(0),\theta
_{1}(0)$. From \eqref{F_1} and \eqref{microF_1}, we also deduce that
\begin{equation*}
|\nabla _{v}F_{1}(t,x,v)|\leq C(1+|v|^{4})\omega \,\text{ and }|\nabla
_{x}F_{1}(t,x,v)|\leq C(1+|v|^{5})\omega \,.
\end{equation*}

Recall \cite{g1} that the Euler-Poisson system \eqref{ep} for a
$\frac{5}{3}- $ law perfect gas admits smooth small global solutions
$\rho _{0},u_{0},\nabla \phi _{0}$ with the following point-wise
uniform-in-time decay:
\begin{equation}
\Vert \rho _{0}-\overline{\rho }\Vert _{W^{s,\infty }}+\Vert u_{0}\Vert
_{W^{s,\infty }}+\Vert \nabla \phi _{0}\Vert _{W^{s,\infty }}\leq \frac{C}{%
(1+t)^{p}} \label{decay}
\end{equation}%
for any $1<p<3/2$ and for each $s\geq 0$. In the next lemma, we show that
the corresponding Hilbert expansion coefficients $F_{i}$ cannot grow
arbitrarily in time.

\begin{lemma}
\label{regularity2} Let smooth global solutions $\rho_0, u_0 ,\nabla\phi_0$
to the Euler-Poisson system \eqref{ep} be given and let $\theta_0=K\rho_0^{%
\frac23}$. For each $k\geq0,$ let $\rho_{k+1}(0,x),u_{k+1}(0,x),%
\theta_{k+1}(0,x)\in H^s$, $s\geq 0$ be given initial data to \eqref{eqF_k}.
Then the linear system \eqref{eqF_k} is well-posed in $H^s$, and
furthermore, there exists a constant $C>0$ depending only on the initial
data (independent of $t$) such that for each $t$,
\begin{equation}  \label{F_i}
\begin{split}
& |F_i|\leq C(1+t)^{i-1}(1+|v|^{3i})\omega,\;|\nabla_x\phi_i|\leq
C(1+t)^{i-1} \\
& |\nabla_vF_i|\leq C(1+t)^{i-1}(1+|v|^{3i+1})\omega,\; \;|\nabla_xF_i|\leq
C(1+t)^{i-1}(1+|v|^{3i+2})\omega, \\
&|\nabla_v\nabla_vF_i|\leq C(1+t)^{i-1}(1+|v|^{3i+2})\omega,\;
|\nabla_x\nabla_vF_i|\leq C(1+t)^{i-1}(1+|v|^{3i+3})\omega.
\end{split}%
\end{equation}
\end{lemma}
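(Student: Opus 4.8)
The plan is to prove Lemma \ref{regularity2} by induction on $i$, treating \eqref{eqF_k} as a linear hyperbolic-type system for $(\rho_{k+1}, u_{k+1}, \theta_{k+1})$ coupled to a Poisson equation for $\phi_{k+1}$, whose coefficients and forcing terms are built from the lower-order coefficients $F_0, \dots, F_k$ together with $\rho_0, u_0, \theta_0, \nabla\phi_0$. The base case $i=0$ is just the decay \eqref{decay}; for $i=1$ the system \eqref{eqF1} is a linear symmetrizable system (after the standard reductions carried out in the proof of Lemma 2.1) with smooth, bounded, time-decaying coefficients and forcing $\partial_j(\mu(\theta_0)\partial_j u_0^i)$, $\nabla\cdot(\kappa(\theta_0)\nabla\theta_0) + 2\mu(\theta_0)|\nabla u_0|^2$ that lie in $L^1_t H^s$ by \eqref{decay}. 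Hence a standard energy estimate in $H^s$ gives a solution with $\|(\rho_1, u_1, \theta_1)(t)\|_{H^s} \le C$ uniformly in $t$ (the $(1+t)^{i-1} = (1+t)^0$ bound), and Sobolev embedding with $s$ large converts this into the pointwise bounds on the macroscopic parts; the microscopic part $\{\mathbf{I-P}\}(F_1/\sqrt\omega) = L^{-1}(\cdots)$ inherits the same $t$-uniform bound and the stated velocity growth $(1+|v|^3)\sqrt\omega$ because $L^{-1}$ preserves polynomial decay in $v$, as already noted around \eqref{micro}.

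For the inductive step, I would assume \eqref{F_i} holds for all indices $\le k$ and examine the structure of \eqref{eqF_k}. The left-hand side is the \emph{same} linearized Euler-Poisson operator in $(\rho_{k+1}, u_{k+1}, \theta_{k+1})$ as at order $1$ (only the index changes), so the homogeneous part contributes no growth. The growth comes entirely from the forcing terms $f_k$, $g_k$, and from the microscopic-part equation: each forcing term is a finite sum of products of the form (derivative of $F_i$)(derivative of $F_j$) or $\rho_j \nabla\phi_i$ with $i + j = k+1$, $i, j \ge 1$, plus the term $-\{\partial_t + v\cdot\nabla_x\}F_k$ and the heat/viscosity-type contributions from $\{\mathbf{I-P}\}(F_{k+1}/\sqrt\omega)$. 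Using the inductive hypothesis, a product with $i + j = k+1$ is bounded by $C(1+t)^{(i-1)+(j-1)}(1+|v|^{\cdots})\omega = C(1+t)^{k-1}(1+|v|^\cdots)\omega$, and the term $\partial_t F_k$ is $O((1+t)^{k-1})$ directly; so the forcing is $O((1+t)^{k-1})$ in the appropriate weighted norm. Feeding a forcing of size $(1+t)^{k-1}$ into the (growth-free) linear system via Duhamel and the energy estimate yields $\|(\rho_{k+1}, u_{k+1}, \theta_{k+1})(t)\|_{H^s} \le C\int_0^t (1+\tau)^{k-1}\, d\tau \le C(1+t)^k$. \emph{Wait} --- this gives $(1+t)^k$, not the claimed $(1+t)^{(k+1)-1} = (1+t)^k$; these agree, so the bound closes. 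The velocity weights track the usual pattern: each application of $L^{-1}$ or of $\nabla_v, \nabla_x$ costs a fixed number of powers of $|v|$, and the products multiply the polynomial degrees, producing $|v|^{3(k+1)}$ for $F_{k+1}$, $|v|^{3(k+1)+1}$ for $\nabla_v F_{k+1}$, etc.; one checks the bookkeeping matches \eqref{F_i}. The Poisson equation $\Delta\phi_{k+1} = \rho_{k+1}$ with $\rho_{k+1}\in H^s$ (and suitable neutrality/decay) gives $\nabla\phi_{k+1}\in L^\infty$ with the same $(1+t)^{k}$ bound by elliptic regularity.

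The main obstacle I anticipate is \emph{not} the well-posedness or the qualitative growth rate but the sharp bookkeeping needed to guarantee (a) that the linearized Euler-Poisson operator on the left of \eqref{eqF_k} really is the growth-free, $H^s$-energy-estimable operator --- this requires that the irrotational structure and the dispersive Klein-Gordon mechanism from \cite{g1}, which suppress growth for $F_0$, also suppress growth for the \emph{linearized} system around $F_0$, i.e. that the linearization does not destroy the decay; one must verify the symmetrizer and the Poisson coupling combine so that the energy estimate has no exponentially or polynomially growing factor --- and (b) that the velocity-weight powers in \eqref{F_i} are exactly the ones propagated by $L^{-1}$ and by the derivatives $\nabla_v, \nabla_x$ applied to the Maxwellian $\omega$, so that the weighted $L^\infty$ bounds on the forcing terms $f_k, g_k$ close with the claimed exponents. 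Concretely: $\nabla_v$ hitting $\omega$ or $\sqrt\omega$ produces a factor $\sim (v - u_0)/\theta_0$, costing one power of $|v|$; $\nabla_x$ hitting $\omega$ through the $x$-dependence of $\rho_0, u_0, \theta_0$ costs up to two powers after accounting for the $|v-u_0|^2/\theta_0$ in the exponent; and each factor $(v^i - u_0^i)(v^j - u_0^j)$ appearing in the fluxes $f_k, g_k$ costs two more. Keeping these tallies consistent across all six bounds in \eqref{F_i}, and checking that the forcing terms (which involve $F_{k+1}$ itself through the flux integrals defining $f_k, g_k$) are closed by the \emph{already-determined} microscopic part $\{\mathbf{I-P}\}(F_{k+1}/\sqrt\omega)$ rather than creating a circular dependence, is the delicate part; everything else is a routine linear energy estimate plus Sobolev embedding.
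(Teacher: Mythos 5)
Your proposal follows essentially the same route as the paper: induction on $i$, recasting the macroscopic system as a linear symmetric hyperbolic system coupled to the Poisson equation, an $H^s$ energy estimate whose homogeneous part is growth-free because the coefficients decay like $(1+t)^{-p}$ with $p>1$ (integrable in time), Gronwall/Duhamel to convert polynomially growing forcing into the stated polynomial bounds, and the fact that $L^{-1}$ preserves velocity decay for the microscopic part. Your worry (a) about needing the dispersive Klein--Gordon mechanism for the linearized system is unnecessary --- the paper closes the energy estimate using only the symmetrizer and the integrability of the coefficient decay --- and your resolution of the apparent circularity in $f_k,g_k$ (the flux integrals see only the already-determined microscopic part of $F_{k+1}$) is exactly right.
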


\begin{proof} The well-posedness easily follows from the linear theory, for instance see
\cite{E}. Here we provide the a priori estimates for \eqref{F_i}. The proof relies on the induction on $i$.
We first prove for $F_1$.
Write the linear system \eqref{eqF1} as a symmetric
hyperbolic system with the corresponding symmetrizer $A_0$:
\begin{equation}\label{U}
A_0\{\partial_tU-V\}+\sum_{i=1}^3A_i\partial_iU +BU=F
\end{equation}
where $U$, $V$, $A_0$, and $A_i$'s are given as follows:
\[
\begin{split}
U\equiv\left(\begin{array}{c} \rho_1 \\ (u_1)^t\\
\theta_1
\end{array}
\right), \;V\equiv\left(\begin{array}{c} 0 \\ (\nabla\phi_1)^t\\
0
\end{array}
\right),\; A_0 \equiv\left(\begin{array}{ccc} (\theta_0)^2
& 0 & 0   \\ 0 & (\rho_0)^2\theta_0 \mathbb{I} & 0\\
  0 & 0 & \tfrac{(\rho_0)^2}{6}
\end{array}
\right), \\
 A_i \equiv\left(\begin{array}{ccc} (\theta_0)^2{u}_0^i
& \rho_0(\theta_0)^2 e_i & 0 \\
\rho_0(\theta_0)^2(e_i)^t  & (\rho_0)^2\theta_0{u}_0^i \mathbb{I}&
\tfrac{(\rho_0)^2\theta_0}{3} (e_i)^t\\
0 & \tfrac{(\rho_0)^2\theta_0}{3}e_i & \tfrac{(\rho_0)^2{u}_0^i}{6}
\end{array}
\right).
\end{split}
\]
$(\cdot)^t$ denotes the transpose of row vectors, $e_i$'s for
$i=1,2,3$ are the standard unit (row) base vectors in
$\mathbb{R}^3$, and $\mathbb{I}$ is the $3\times 3$ identity matrix.
$B$ and $F$, which consist of $\rho_0,\,{u}_0,\,\theta_0$,
 and first derivatives of
$\rho_0,\,{u}_0,\,\theta_0$, can be easily written down. In
particular, we have $\|B\|_{W^{s,\infty}}+\|F\|_{W^{s,\infty}}\leq
\frac{C}{(1+t)^p}$ for any $1<p<\tfrac32$ and any $s\geq 0$. Note
that \eqref{U} together with $\Delta\phi_1=\rho_1$ is strictly
hyperbolic and thus we can apply the standard energy method of the
linear symmetric hyperbolic system to \eqref{U} to obtain the
following energy inequality: for each $s\geq 0$,
\begin{equation}\label{UI}
\frac{d}{dt}\{\|U\|^2_{H^s}+\|V\|^2_{H^s}\}\leq
\frac{C}{(1+t)^p}\{\|U\|^2_{H^s}+\|V\|^2_{H^s}\}+
\frac{C}{(1+t)^p}\{\|U\|_{H^s}+\|V\|_{H^s}\}\,.
\end{equation}
Hence, we obtain $\|U\|_{H^s}+\|V\|_{H^s}\leq C$ and therefore, from
\eqref{F_1} and  \eqref{microF_1}, the inequality \eqref{F_i} for
$i=1$ follows. Now suppose \eqref{F_i} holds for $1\leq i\leq n$.
For $i=n+1$, we first note that from the coefficient of
$\varepsilon^n$ in \eqref{coeff}, the microscopic part of
$\frac{F_{n+1}}{\sqrt{\omega}}$ is bounded by
\[
|\{\mathbf{I-P}\}(\frac{F_{n+1}}{\sqrt{\omega}})|\leq
C(1+t)^n(1+|v|^{3(n+1)}) \sqrt{\omega}
\]
by the induction hypothesis. For the macroscopic part, we project
the equation for $F_{n+1}$ in \eqref{coeff} onto $1, v,|v|^2$ as in
$F_1$ case to obtain fluid equations for
$\rho_{n+1},u_{n+1},\theta_{n+1},\nabla\phi_{n+1}$. See
\eqref{eqF_k}. Since the structure of the left hand side of
\eqref{eqF_k} is the same as in $\rho_1,u_1,\theta_1,\nabla\phi_1$,
one can write the equations for
$\rho_{n+1},u_{n+1},\theta_{n+1},\nabla\phi_{n+1}$ as the linear
symmetric hyperbolic system. The difference is that there are extra
terms coming from $\sum_{\substack{i+j=n+1
\\i,j\geq1}}\nabla_x\phi_i\cdot\nabla_vF_j$. From the induction
hypothesis, one can get the following corresponding inequality for
$U,V$ as in \eqref{UI}
\[
\frac{d}{dt}\{\|U\|^2_{H^s}+\|V\|^2_{H^s}\}\leq
\frac{C}{(1+t)^p}\{\|U\|^2_{H^s}+\|V\|^2_{H^s}\}+
C(1+t)^n\{\|U\|_{H^s}+\|V\|_{H^s}\}\,.
\]
By Gronwall inequality, we obtain
\[
\|U\|_{H^s}+\|V\|_{H^s}\leq C(1+t)^{n+1},
\]
and this verifies \eqref{F_i} for $i=n+1$.
\end{proof}

\section{$L^{2}$ Estimates for Remainder $F_{R}^{\protect\varepsilon }$}

In this section, we perform the $L^{2}$ energy estimates of remainders $%
f^{\varepsilon }=\frac{F_{R}^{\varepsilon }}{\sqrt{\omega }}$ and $\nabla
_{x}\phi _{R}^{\varepsilon }$. Here is the main result of this section.

\begin{prop}
\label{L2} There exists a constant $C$ independent of $t\,,\,\varepsilon $
such that for each $t$ and $\varepsilon $,
\begin{equation*}
\begin{split}
\frac{d}{dt}& \{\Vert \sqrt{\theta _{0}}f^{\varepsilon }\Vert ^{2}+\Vert
\nabla \phi _{R}^{\varepsilon }\Vert ^{2}\}+\frac{\delta _{0}}{2\varepsilon }%
\theta _{M}\Vert \{\mathbf{I-P}\}f^{\varepsilon }\Vert _{\nu }^{2} \\
& \leq C\{\varepsilon ^{2}\Vert h^{\varepsilon }\Vert _{\infty }\Vert
f^{\varepsilon }\Vert +\varepsilon ^{k-1}\Vert h^{\varepsilon }\Vert
_{\infty }\Vert f^{\varepsilon }\Vert ^{2}+\varepsilon ^{k}\Vert
h^{\varepsilon }\Vert _{\infty }\Vert f^{\varepsilon }\Vert \Vert \nabla
\phi _{R}^{\varepsilon }\Vert \} \\
& +\frac{C}{(1+t)^{p}}\{\Vert f^{\varepsilon }\Vert ^{2}+\Vert \nabla \phi
_{R}^{\varepsilon }\Vert ^{2}\}+C\mathcal{I}_{1}\{\varepsilon \Vert
f^{\varepsilon }\Vert ^{2}+\varepsilon \Vert \nabla \phi _{R}^{\varepsilon
}\Vert ^{2}\}+C\mathcal{I}_{2}\varepsilon ^{k-1}\Vert f^{\varepsilon }\Vert ,
\end{split}%
\end{equation*}%
where $\mathcal{I}_{1}$ and $\mathcal{I}_{2}$ are given as follows:
\begin{equation}
\mathcal{I}_{1}=\sum_{i=1}^{2k-1}[\varepsilon
(1+t)]^{i-1}+(\sum_{i=1}^{2k-1}[\varepsilon (1+t)]^{i-1})^{2}\,;\quad
\mathcal{I}_{2}=\sum_{2k\leq i+j\leq 4k-2}\varepsilon
^{i+j-2k}(1+t)^{i+j-2}\,.  \label{I}
\end{equation}
\end{prop}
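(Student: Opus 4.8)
The plan is to derive the $L^2$ energy identity for the remainder system \eqref{F_R} by testing against $f^\varepsilon$ in the natural weighted inner product. First I would substitute $F_R^\varepsilon=\sqrt\omega f^\varepsilon$ into \eqref{F_R}, divide by $\sqrt\omega$, and rewrite the transport part: the terms $\partial_t f^\varepsilon + v\cdot\nabla_x f^\varepsilon + \nabla_x\phi_0\cdot\nabla_v f^\varepsilon$ produce, after commuting with $\sqrt\omega$, a zeroth-order contribution involving $\frac{\{\partial_t+v\cdot\nabla_x+\nabla_x\phi_0\cdot\nabla_v\}\sqrt\omega}{\sqrt\omega}$, which because $\omega$ is the local Maxwellian solving the Euler-Poisson fluid equations can be expressed through $\partial_t\theta_0,\nabla_x\theta_0$ and the macroscopic fields; this is the origin of the delicate cubic-in-$v$ term flagged in the introduction. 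I would then take $\langle\cdot,f^\varepsilon\rangle$ of the whole equation. The linear collision term $-\frac1\varepsilon\{Q(F_0,F_R^\varepsilon)+Q(F_R^\varepsilon,F_0)\} = \frac1\varepsilon L f^\varepsilon$ after conjugation; using the spectral gap $\langle L g,g\rangle\geq\delta_0\|\{\mathbf{I-P}\}g\|_\nu^2$ together with the lower bound $\nu(\omega)\gtrsim\theta_M\langle v\rangle$ (the global Maxwellian domination built into the definition of $w$ and $\beta\geq 7/2$) yields the good dissipative term $\frac{\delta_0}{2\varepsilon}\theta_M\|\{\mathbf{I-P}\}f^\varepsilon\|_\nu^2$ on the left.

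Next I would handle the Poisson coupling. Writing $\Delta\phi_R^\varepsilon=\int F_R^\varepsilon\,dv=\int\sqrt\omega f^\varepsilon\,dv$, the contribution of $\nabla_x\phi_R^\varepsilon\cdot\nabla_v F_0$, after integrating by parts in $v$, pairs against the macroscopic part of $f^\varepsilon$ and — using the identity $\int\sqrt\omega\,v\cdot(\text{stuff})$ and the equation for $\phi_R^\varepsilon$ — is converted into $\frac12\frac{d}{dt}\|\nabla\phi_R^\varepsilon\|^2$ up to lower-order commutators carrying factors of $\nabla_x\phi_0, \partial_t\theta_0$ etc., which decay like $(1+t)^{-p}$ by \eqref{decay}; these give the $\frac{C}{(1+t)^p}\{\|f^\varepsilon\|^2+\|\nabla\phi_R^\varepsilon\|^2\}$ term. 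The factor $\sqrt{\theta_0}$ in $\|\sqrt{\theta_0}f^\varepsilon\|^2$ is chosen precisely so that the worst first-order-in-$\partial$ term from the transport commutator is absorbed into $\frac12\frac{d}{dt}\|\sqrt{\theta_0}f^\varepsilon\|^2$, with the residue again controlled by $(1+t)^{-p}$ times the energy. The genuinely cubic-in-$v$ leftover is not $L^2$-controllable and is instead estimated by pulling out $\|h^\varepsilon\|_\infty$: bound $|f^\varepsilon|\le \frac{\sqrt{\omega_M}}{w\sqrt\omega}\|h^\varepsilon\|_\infty$ pointwise, so that $\int(1+|v|^3)\omega\cdot|f^\varepsilon|\,\cdot|f^\varepsilon|$ becomes $\lesssim \|h^\varepsilon\|_\infty\|f^\varepsilon\|$ after using $\beta\ge 7/2$ to absorb the polynomial weights into $\frac{\sqrt{\omega_M}}{w}\sqrt\omega\in L^2$; collecting powers of $\varepsilon$ from the scaling produces the $\varepsilon^2\|h^\varepsilon\|_\infty\|f^\varepsilon\|$ term.

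The remaining right-hand-side terms come from the source $A$, the $F_i$-interaction terms $\sum_{i}\varepsilon^{i-1}\{Q(F_i,F_R^\varepsilon)+Q(F_R^\varepsilon,F_i)\}$ and $\sum_i\varepsilon^i\{\nabla\phi_i\cdot\nabla_v F_R^\varepsilon+\nabla\phi_R^\varepsilon\cdot\nabla_v F_i\}$, the quadratic terms $\varepsilon^{k-1}Q(F_R^\varepsilon,F_R^\varepsilon)$ and $-\varepsilon^k\nabla\phi_R^\varepsilon\cdot\nabla_v F_R^\varepsilon$. For each I would apply the standard trilinear estimate $|\langle\Gamma(g_1,g_2),g_3\rangle|\lesssim \|g_1\|_\nu\|g_2\|_\infty\|g_3\|_\nu$ (or its variants) and the pointwise bounds \eqref{F_i} from Lemma \ref{regularity2}, which supply the $(1+t)^{i-1}$ growth factors. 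Tracking the powers of $\varepsilon$ against those growth factors is exactly what assembles the combinations $[\varepsilon(1+t)]^{i-1}$ defining $\mathcal{I}_1$ (linear-in-remainder terms, hence the square accounting for both the $\|f\|^2$ pairing and the $\phi$ coupling) and $\varepsilon^{i+j-2k}(1+t)^{i+j-2}$ in $\mathcal{I}_2$ (the part of $A$ with $i+j\ge 2k$, carried by the $\varepsilon^{k-1}A$ prefactor). The $\phi_R^\varepsilon\cdot\nabla_v F_i$ term is integrated by parts back onto $\phi_R^\varepsilon$ via the Poisson equation, converting a $v$-derivative loss into an $x$-derivative on $\phi_R^\varepsilon$ that is harmless in $L^2$. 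The quadratic collision term gives $\varepsilon^{k-1}\|h^\varepsilon\|_\infty\|f^\varepsilon\|^2$ and the nonlinear Poisson term $\varepsilon^k\|h^\varepsilon\|_\infty\|f^\varepsilon\|\|\nabla\phi_R^\varepsilon\|$.

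The main obstacle is the delicate cubic-$v$ transport commutator: showing that after the $\sqrt{\theta_0}$-weighting only a genuinely bounded-in-$v$ piece survives into the energy while the unbounded piece is isolated into a term of the exact form $\varepsilon^2\|h^\varepsilon\|_\infty\|f^\varepsilon\|$ requires careful algebra with the local-Maxwellian derivative identities and the Euler-Poisson relations for $(\rho_0,u_0,\theta_0,\phi_0)$; everything else is a bookkeeping exercise in matching powers of $\varepsilon$ and $(1+t)$. I would organize the proof so that this commutator computation is done once and cleanly, then dispatch all source terms by the uniform trilinear estimate plus \eqref{F_i} and \eqref{decay}.
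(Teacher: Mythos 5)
Your overall architecture matches the paper's: test the $f^\varepsilon$-equation against $\theta_0 f^\varepsilon$, use the Poisson equation to generate $\frac12\frac{d}{dt}\|\nabla\phi_R^\varepsilon\|^2$ and cancel the $\nabla_x\phi_R^\varepsilon\cdot\nabla_v F_0$ contribution up to a $(1+t)^{-p}$ residue, and dispatch the source terms with \eqref{F_i} and \eqref{decay}. But there is a genuine gap at exactly the point you yourself flag as the main obstacle: the cubic-in-$v$ transport commutator $\frac12(\partial_t+v\cdot\nabla_x)\theta_0-\theta_0\frac{\{\partial_t+v\cdot\nabla_x+\nabla_x\phi_0\cdot\nabla_v\}\sqrt\omega}{\sqrt\omega}$. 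This term carries \emph{no} power of $\varepsilon$ in the equation, so "collecting powers of $\varepsilon$ from the scaling" cannot produce the claimed $\varepsilon^2\|h^\varepsilon\|_\infty\|f^\varepsilon\|$; your global pointwise substitution $|f^\varepsilon|\le\frac{\sqrt{\omega_M}}{w\sqrt\omega}\|h^\varepsilon\|_\infty$ only yields $C\|h^\varepsilon\|_\infty\|f^\varepsilon\|$, which is fatal in the bootstrap since only $\varepsilon^{3/2}\|h^\varepsilon\|_\infty$ is ever controlled. The missing idea is the $\varepsilon$-dependent velocity splitting at $|v|=\kappa/\sqrt\varepsilon$: on $|v|\ge\kappa/\sqrt\varepsilon$ one uses $\{1+|v|^2\}^{3/2}|f^\varepsilon|\le\{1+|v|^2\}^{-2}|h^\varepsilon|$ (from $\beta\ge 7/2$) together with $\{1+|v|^2\}^{-2}\le C_\kappa\varepsilon^2$ there, which is where the $\varepsilon^2$ actually comes from; on $|v|\le\kappa/\sqrt\varepsilon$ the cubic weight is bounded by $\kappa^2\varepsilon^{-1}(1+|v|)$ for the microscopic part, so that piece is absorbed into the dissipation (this is why the statement has $\frac{\delta_0}{2\varepsilon}$ rather than $\frac{\delta_0}{\varepsilon}$), while the macroscopic part is controlled by the $(1+t)^{-p}$ decay of $\partial\rho_0,\partial u_0,\partial\theta_0,\nabla\phi_0$.

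A second, related omission: the terms $\sum_i\varepsilon^{i-1}\Gamma(F_i/\sqrt\omega,f^\varepsilon)$ and $\sum_i\varepsilon^i\nabla\phi_i\cdot\frac{v-u_0}{2}f^\varepsilon$ start at $i=1$ with prefactor $\varepsilon^0$, so a generic trilinear bound gives a unit-size term $\sim\|f^\varepsilon\|_\nu^2$ that cannot be Gronwalled. The paper uses that $\Gamma$ (and the odd moment) pairs only with $\{\mathbf{I-P}\}f^\varepsilon$, then applies Young's inequality with weights $\frac{\varepsilon}{\kappa^2}$ and $\frac{\kappa^2}{\varepsilon}$, absorbing the latter into the dissipation and leaving $C_\kappa(\sum_i[\varepsilon(1+t)]^{i-1})^2\varepsilon\|f^\varepsilon\|^2$ — this is the origin of both the extra factor $\varepsilon$ multiplying $\mathcal{I}_1$ and the squared sum inside $\mathcal{I}_1$ (not the $\phi$-coupling, as you suggest; the linear part of $\mathcal{I}_1$ comes from the $\nabla\phi_R^\varepsilon\cdot\nabla_v F_i$ terms). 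Without these two $\varepsilon$-weighted absorption arguments the right-hand side of the proposition cannot be reached.
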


\begin{proof} First we write the equation for $f^{\varepsilon }$ from %
\eqref{F_R}:
\begin{equation*}
\begin{split}
& \partial _{t}f^{\varepsilon }+v\cdot \nabla _{x}f^{\varepsilon }+\nabla
_{x}\phi _{0}\cdot \nabla _{v}f^{\varepsilon }-\frac{v-u_{0}}{\theta _{0}}%
\sqrt{\omega }\cdot \nabla _{x}\phi _{R}^{\varepsilon }+\frac{1}{\varepsilon
}Lf^{\varepsilon } \\
& =-\frac{\{\partial _{t}+v\cdot \nabla _{x}+\nabla _{x}\phi _{0}\cdot
\nabla _{v}\}\sqrt{\omega }}{\sqrt{\omega }}f^{\varepsilon }+\varepsilon
^{k-1}\Gamma (f^{\varepsilon },f^{\varepsilon
})+\sum_{i=1}^{2k-1}\varepsilon ^{i-1}\{\Gamma (\frac{F_{i}}{\sqrt{\omega }}%
,f^{\varepsilon })+\Gamma (f^{\varepsilon },\frac{F_{i}}{\sqrt{\omega }})\}
\\
& \quad -\varepsilon ^{k}\nabla _{x}\phi _{R}^{\varepsilon }\cdot \nabla
_{v}f^{\varepsilon }+\varepsilon ^{k}\nabla _{x}\phi _{R}^{\varepsilon
}\cdot \frac{v-u_{0}}{2\theta _{0}}f^{\varepsilon
}-\sum_{i=1}^{2k-1}\varepsilon ^{i}\{\nabla _{x}\phi _{i}\cdot \nabla
_{v}f^{\varepsilon }+\nabla _{x}\phi _{R}^{\varepsilon }\cdot \frac{\nabla
_{v}F_{i}}{\sqrt{\omega }}\} \\
& \quad +\sum_{i=1}^{2k-1}\varepsilon ^{i}\nabla _{x}\phi _{i}\cdot \frac{%
v-u_{0}}{2\theta _{0}}f^{\varepsilon }+\varepsilon ^{k-1}\overline{A},
\end{split}%
\end{equation*}%
where $\overline{A}=\frac{A}{\sqrt{\omega }}.$ Note that $\nabla _{v}\omega
=-\frac{v-u_{0}}{\theta _{0}}\omega $. Take $L^{2}$ inner product with $%
\theta _{0}f^{\varepsilon }$ on both sides to get
\begin{equation}
\begin{split}
& \frac{1}{2}\frac{d}{dt}\Vert \sqrt{\theta _{0}}f^{\varepsilon }\Vert
^{2}-\int (\int (v-u_{0})\sqrt{\omega }f^{\varepsilon }dv)\cdot \nabla
_{x}\phi _{R}^{\varepsilon }dx+\frac{\delta _{0}}{\varepsilon }%
\inf_{t,x}\theta _{0}||\{\mathbf{I-P}\}f^{\varepsilon }||_{\nu }^{2} \\
& \leq \frac{1}{2}\langle (\partial _{t}+v\cdot \nabla _{x})\theta
_{0}f^{\varepsilon },f^{\varepsilon }\rangle -\langle \theta _{0}\frac{%
\{\partial _{t}+v\cdot \nabla _{x}+\nabla _{x}\phi _{0}\cdot \nabla _{v}\}%
\sqrt{\omega }}{\sqrt{\omega }}f^{\varepsilon },f^{\varepsilon }\rangle  \\
& \quad +\varepsilon ^{k-1}\langle \theta _{0}\Gamma (f^{\varepsilon
},f^{\varepsilon }),f^{\varepsilon }\rangle +\langle \theta
_{0}\sum_{i=1}^{2k-1}\varepsilon ^{i-1}\{\Gamma (\frac{F_{i}}{\sqrt{\omega }}%
,f^{\varepsilon })+\Gamma (f^{\varepsilon },\frac{F_{i}}{\sqrt{\omega }}%
)\},f^{\varepsilon }\rangle  \\
& \quad +\varepsilon ^{k}\langle \nabla _{x}\phi _{R}^{\varepsilon }\cdot
\frac{v-u_{0}}{2}f^{\varepsilon },f^{\varepsilon }\rangle -\langle \theta
_{0}\sum_{i=1}^{2k-1}\varepsilon ^{i}\nabla _{x}\phi _{R}^{\varepsilon
}\cdot \frac{\nabla _{v}F_{i}}{\sqrt{\omega }},f^{\varepsilon }\rangle  \\
& \quad +\langle \sum_{i=1}^{2k-1}\varepsilon ^{i}\nabla _{x}\phi _{i}\cdot
\frac{v-u_{0}}{2}f^{\varepsilon },f^{\varepsilon }\rangle +\varepsilon
^{k-1}\langle \theta _{0}\overline{A},f^{\varepsilon }\rangle .
\end{split}
\label{e1}
\end{equation}%
From $\Delta \phi _{R}^{\varepsilon }=\int_{\mathbb{R}^{3}}f^{\varepsilon }%
\sqrt{\omega }dv$ and \eqref{F_R}, we obtain
\begin{equation*}
-\Delta \partial _{t}\phi _{R}^{\varepsilon }=-\int_{\mathbb{R}^{3}}\partial
_{t}F_{R}^{\varepsilon }dv=\int v\cdot \nabla _{x}(\sqrt{\omega }%
f^{\varepsilon })dv.
\end{equation*}%
Take $L^{2}$ inner product with $\phi _{R}^{\varepsilon }$ on both sides to
get
\begin{equation}
\frac{1}{2}\frac{d}{dt}\Vert \nabla \phi _{R}^{\varepsilon }\Vert ^{2}=\int
-\Delta \partial _{t}\phi _{R}^{\varepsilon }\cdot \phi _{R}^{\varepsilon
}dx=\int \int v\cdot \nabla _{x}(\sqrt{\omega }f^{\varepsilon })\phi
_{R}^{\varepsilon }dvdx=-\int (\int v\sqrt{\omega }f^{\varepsilon }dv)\cdot
\nabla \phi _{R}^{\varepsilon }dx.  \label{e2}
\end{equation}%
Combining \eqref{e1} and \eqref{e2}, we obtain
\begin{equation}
\begin{split}
\frac{1}{2}\frac{d}{dt}\{\Vert \sqrt{\theta _{0}}f^{\varepsilon }\Vert ^{2}&
+\Vert \nabla \phi _{R}^{\varepsilon }\Vert ^{2}\}+\frac{\delta _{0}}{%
\varepsilon }\theta _{M}\Vert \{\mathbf{I-P}\}f^{\varepsilon }\Vert _{\nu
}^{2}\leq -\int u_{0}\int \sqrt{\omega }f^{\varepsilon }dv\cdot \nabla
_{x}\phi _{R}^{\varepsilon }dx \\
& +\frac{1}{2}\langle (\partial _{t}+v\cdot \nabla _{x})\theta
_{0}f^{\varepsilon },f^{\varepsilon }\rangle -\langle \theta _{0}\frac{%
\{\partial _{t}+v\cdot \nabla _{x}+\nabla _{x}\phi _{0}\cdot \nabla _{v}\}%
\sqrt{\omega }}{\sqrt{\omega }}f^{\varepsilon },f^{\varepsilon }\rangle  \\
& +\varepsilon ^{k-1}\langle \theta _{0}\Gamma (f^{\varepsilon
},f^{\varepsilon }),f^{\varepsilon }\rangle +\langle \theta
_{0}\sum_{i=1}^{2k-1}\varepsilon ^{i-1}\{\Gamma (\frac{F_{i}}{\sqrt{\omega }}%
,f^{\varepsilon })+\Gamma (f^{\varepsilon },\frac{F_{i}}{\sqrt{\omega }}%
)\},f^{\varepsilon }\rangle  \\
& +\varepsilon ^{k}\langle \nabla _{x}\phi _{R}^{\varepsilon }\cdot \frac{%
v-u_{0}}{2}f^{\varepsilon },f^{\varepsilon }\rangle -\langle \theta
_{0}\sum_{i=1}^{2k-1}\varepsilon ^{i}\nabla _{x}\phi _{R}^{\varepsilon
}\cdot \frac{\nabla _{v}F_{i}}{\sqrt{\omega }},f^{\varepsilon }\rangle  \\
& +\langle \sum_{i=1}^{2k-1}\varepsilon ^{i}\nabla _{x}\phi _{i}\cdot \frac{%
v-u_{0}}{2}f^{\varepsilon },f^{\varepsilon }\rangle +\varepsilon
^{k-1}\langle \theta _{0}\overline{A},f^{\varepsilon }\rangle \,.
\end{split}
\label{es}
\end{equation}

The first term in the right hand side is controlled by
\begin{equation*}
\begin{split}
-\int u_{0}\int \sqrt{\omega }f^{\varepsilon }dv\cdot \nabla _{x}\phi
_{R}^{\varepsilon }dx& =-\int u_{0}\Delta _{x}\phi _{R}^{\varepsilon }\cdot
\nabla _{x}\phi _{R}^{\varepsilon }dx=-\int u_{0}^{i}\partial _{j}\partial
_{j}\phi _{R}^{\varepsilon }\partial _{i}\phi _{R}^{\varepsilon }dx \\
& =\int \partial _{j}u_{0}^{i}\partial _{j}\phi _{R}^{\varepsilon }\partial
_{i}\phi _{R}^{\varepsilon }dx-\frac{1}{2}\int \partial
_{i}u_{0}^{i}|\partial _{j}\phi _{R}^{\varepsilon }|^{2}dx \\
& \leq \frac{C}{(1+t)^{p}}\Vert \nabla \phi _{R}^{\varepsilon }\Vert ^{2}\;\;%
\text{ from \eqref{decay}\thinspace .}
\end{split}%
\end{equation*}%
The key difficult term $\frac{1}{2}(\partial _{t}+v\cdot \nabla _{x})\theta
_{0}-\theta _{0}\frac{\{\partial _{t}+v\cdot \nabla _{x}+\nabla _{x}\phi
_{0}\cdot \nabla _{v}\}\sqrt{\omega }}{\sqrt{\omega }}$ is a cubic
polynomial in $v,$ and since $\{1+|v|^{2}\}^{3/2}f^{\varepsilon }\leq
\{1+|v|^{2}\}^{-2}h^{\varepsilon },$ for $\beta \geq 7/2$ in (\ref{h}), the
second line in \eqref{es} can be estimated as follows: from the uniform
bounds for $\rho _{0},u_{0},\theta _{0},\nabla \phi _{0}$ again in %
\eqref{decay},
\begin{equation*}
\begin{split}
& \frac{1}{2}\langle (\partial _{t}+v\cdot \nabla _{x})\theta
_{0}f^{\varepsilon },f^{\varepsilon }\rangle -\langle \theta _{0}\frac{%
\{\partial _{t}+v\cdot \nabla _{x}+\nabla _{x}\phi _{0}\cdot \nabla _{v}\}%
\sqrt{\omega }}{\sqrt{\omega }}f^{\varepsilon },f^{\varepsilon }\rangle  \\
& =\int_{|v|\geq \frac{\kappa }{\sqrt{\varepsilon }}}+\int_{|v|\leq \frac{%
\kappa }{\sqrt{\varepsilon }}} \\
& \leq C\{\Vert \partial \rho _{0}\Vert +\Vert \partial u_{0}\Vert +\Vert
\partial \theta _{0}\Vert +\Vert \nabla \phi _{0}\Vert \}\times \Vert
\{1+|v|^{2}\}^{3/2}f^{\varepsilon }\mathbf{1}_{|v|\geq \frac{\kappa }{\sqrt{%
\varepsilon }}}\Vert _{\infty }\times \Vert f^{\varepsilon }\Vert  \\
& \quad +C\{\Vert \partial \rho _{0}\Vert _{\infty }+\Vert \partial
u_{0}\Vert _{\infty }+\Vert \partial \theta _{0}\Vert _{\infty }+\Vert
\nabla \phi _{0}\Vert _{\infty }\}\times \Vert
\{1+|v|^{2}\}^{3/4}f^{\varepsilon }\mathbf{1}_{|v|\leq \frac{\kappa }{\sqrt{%
\varepsilon }}}\Vert ^{2} \\
& \leq C_{\kappa }\varepsilon ^{2}\Vert h^{\varepsilon }\Vert _{\infty
}\Vert f^{\varepsilon }\Vert +\frac{C}{(1+t)^{p}}\{\Vert \{1+|v|^{2}\}^{3/4}%
\mathbf{P}f^{\varepsilon }\mathbf{1}_{|v|\leq \frac{\kappa }{\sqrt{%
\varepsilon }}}\Vert ^{2}+\Vert \{1+|v|^{2}\}^{3/4}\{\mathbf{I}-\mathbf{P}%
\}f^{\varepsilon }\mathbf{1}_{|v|\leq \frac{\kappa }{\sqrt{\varepsilon }}%
}\Vert ^{2}\} \\
& \leq C_{\kappa }\varepsilon ^{2}\Vert h^{\varepsilon }\Vert _{\infty
}\Vert f^{\varepsilon }\Vert +\frac{C}{(1+t)^{p}}\{\Vert f^{\varepsilon
}\Vert ^{2}+\frac{\kappa ^{2}}{\varepsilon }\Vert \{\mathbf{I-P}%
\}f^{\varepsilon }\Vert _{\nu }^{2}\}.
\end{split}%
\end{equation*}%
By applying Lemma 2.3 in \cite{g4} and \eqref{h}, the third line in %
\eqref{es} can be estimated as follows:
\begin{equation*}
\varepsilon ^{k-1}\langle \theta _{0}\Gamma (f^{\varepsilon },f^{\varepsilon
}),f^{\varepsilon }\rangle \leq C\varepsilon ^{k-1}\Vert \nu f^{\varepsilon
}\Vert _{\infty }\Vert f^{\varepsilon }\Vert ^{2}\leq C\varepsilon
^{k-1}\Vert h^{\varepsilon }\Vert _{\infty }\Vert f^{\varepsilon }\Vert ^{2}.
\end{equation*}%
From collision symmetry, we get
\begin{equation*}
\begin{split}
& \langle \theta _{0}\sum_{i=1}^{2k-1}\varepsilon ^{i-1}\{\Gamma (\frac{F_{i}%
}{\sqrt{\omega }},f^{\varepsilon })+\Gamma (f^{\varepsilon },\frac{F_{i}}{%
\sqrt{\omega }})\},f^{\varepsilon }\rangle  \\
& =\langle \theta _{0}\sum_{i=1}^{2k-1}\varepsilon ^{i-1}\{\Gamma (\frac{%
F_{i}}{\sqrt{\omega }},f^{\varepsilon })+\Gamma (f^{\varepsilon },\frac{F_{i}%
}{\sqrt{\omega }})\},\{\mathbf{I-P}\}f^{\varepsilon }\rangle  \\
& \leq \sum_{i=1}^{2k-1}\varepsilon ^{i-1}\Vert \theta _{0}\int (1+|v|)\frac{%
F_{i}}{\sqrt{\omega }}dv\Vert _{\infty }\Vert f^{\varepsilon }\Vert \Vert \{%
\mathbf{I-P}\}f^{\varepsilon }\Vert _{\nu } \\
& \leq \left( \sum_{i=1}^{2k-1}\varepsilon ^{i-1}\Vert \theta _{0}\int v%
\frac{F_{i}}{\sqrt{\omega }}dv\Vert _{\infty }\right) ^{2}\frac{\varepsilon
}{\kappa ^{2}}\Vert f^{\varepsilon }\Vert ^{2}+\frac{\kappa ^{2}}{%
\varepsilon }\Vert \{\mathbf{I-P}\}f^{\varepsilon }\Vert _{\nu }^{2} \\
& \leq {C_{\kappa }}(\sum_{i=1}^{2k-1}[\varepsilon
(1+t)]^{i-1})^{2}\varepsilon \Vert f^{\varepsilon }\Vert ^{2}+\frac{\kappa
^{2}}{\varepsilon }\Vert \{\mathbf{I-P}\}f^{\varepsilon }\Vert _{\nu
}^{2}\;\;\text{ from \eqref{F_i}}\,.
\end{split}%
\end{equation*}%
Next we estimate $\varepsilon ^{k}\langle \nabla _{x}\phi _{R}^{\varepsilon
}\cdot \frac{v-u_{0}}{2}f^{\varepsilon },f^{\varepsilon }\rangle $:
\begin{equation*}
\begin{split}
\varepsilon ^{k}\langle \nabla _{x}\phi _{R}^{\varepsilon }\cdot \frac{%
v-u_{0}}{2}f^{\varepsilon },f^{\varepsilon }\rangle & \leq \varepsilon
^{k}\Vert \nabla \phi _{R}^{\varepsilon }\Vert (\int |\int
|v-u_{0}||f^{\varepsilon }|^{2}dv|^{2}dx)^{\frac{1}{2}} \\
& =\varepsilon ^{k}\Vert \nabla \phi _{R}^{\varepsilon }\Vert \cdot (\int
|\int |v-u_{0}|f^{\varepsilon }\frac{\sqrt{\omega _{M}}}{w(v)\sqrt{\omega }}%
h^{\varepsilon }dv|^{2}dx)^{\frac{1}{2}} \\
& \leq \varepsilon ^{k}\Vert (\int \frac{|v-u_{0}|^{2}\omega _{M}}{%
|w(v)|^{2}\omega }dv)^{\frac{1}{2}}\Vert _{\infty }\Vert h^{\varepsilon
}\Vert _{\infty }\Vert \nabla \phi _{R}^{\varepsilon }\Vert \cdot \Vert
f^{\varepsilon }\Vert  \\
& \leq C\varepsilon ^{k}\Vert h^{\varepsilon }\Vert _{\infty }\Vert \nabla
\phi _{R}^{\varepsilon }\Vert \cdot \Vert f^{\varepsilon }\Vert
\end{split}%
\end{equation*}%
From \eqref{F_i},
\begin{equation*}
\begin{split}
-\langle \theta _{0}\sum_{i=1}^{2k-1}\varepsilon ^{i}\nabla _{x}\phi
_{R}^{\varepsilon }\cdot \frac{\nabla _{v}F_{i}}{\sqrt{\omega }}%
,f^{\varepsilon }\rangle & \leq \sum_{i=1}^{2k-1}\varepsilon ^{i}\Vert
\theta _{0}(\int \frac{|\nabla _{v}F_{i}|^{2}}{\omega }dv)^{\frac{1}{2}%
}\Vert _{\infty }\Vert \nabla \phi _{R}^{\varepsilon }\Vert \cdot \Vert
f^{\varepsilon }\Vert  \\
& \leq C(\sum_{i=1}^{2k-1}[\varepsilon (1+t)]^{i-1})\{\varepsilon \Vert
\nabla \phi _{R}^{\varepsilon }\Vert ^{2}+\varepsilon \Vert f^{\varepsilon
}\Vert ^{2}\}.
\end{split}%
\end{equation*}%
Next,
\begin{equation*}
\begin{split}
\langle \sum_{i=1}^{2k-1}\varepsilon ^{i}\nabla _{x}\phi _{i}\cdot \frac{%
v-u_{0}}{2}f^{\varepsilon },f^{\varepsilon }\rangle & \leq \left(
\sum_{i=1}^{2k-1}\varepsilon ^{i}\Vert \nabla _{x}\phi _{i}\Vert _{\infty
}\right) ^{2}\frac{\varepsilon }{\kappa ^{2}}\Vert f^{\varepsilon }\Vert
^{2}+\frac{\kappa ^{2}}{\varepsilon }\Vert \{\mathbf{I-P}\}f^{\varepsilon
}\Vert _{\nu }^{2} \\
& \leq C_{\kappa }(\sum_{i=1}^{2k-1}\varepsilon
^{i}(1+t)^{i-1})^{2}\varepsilon \Vert f^{\varepsilon }\Vert ^{2}+\frac{%
\kappa ^{2}}{\varepsilon }\Vert \{\mathbf{I-P}\}f^{\varepsilon }\Vert _{\nu
}^{2}
\end{split}%
\end{equation*}%
Lastly, by recalling \eqref{A} and from \eqref{F_i},
\begin{equation*}
\varepsilon ^{k-1}\langle \theta _{0}\overline{A},f^{\varepsilon }\rangle
\leq C(\sum_{2k\leq i+j\leq 4k-2}\varepsilon
^{i+j-2k}(1+t)^{i+j-2})\,\varepsilon ^{k-1}\Vert f^{\varepsilon }\Vert \,.
\end{equation*}%
%
%
%
%
%
%
%
%
%
We choose $\kappa $ sufficiently small to absorb $\frac{\kappa ^{2}}{%
\varepsilon }\Vert \{\mathbf{I-P}\}f^{\varepsilon }\Vert _{\nu }^{2}$ terms
into the dissipation in the left hand side of \eqref{e1}. Together with
Lemma \ref{regularity2}, we complete the proof of our proposition.
\end{proof}

\section{Characteristics}

In this section, we study the curved trajectory for the
Vlasov-Poisson-Boltzmann system \eqref{vpb}. For any function $\phi \in
L^{\infty }([0,T];C^{2,\alpha })$, we define the characteristics $[X(\tau
;t,x,v),V(\tau ;t,x,v)]$ passing though $(t,x,v)$ such that
\begin{equation}
\begin{split}  \label{char}
\frac{dX(\tau ;t,x,v)}{d\tau }& =V(\tau ;t,x,v),\;X(t;t,x,v)=x \\
\frac{dV(\tau ;t,x,v)}{d\tau }& =\nabla _{x}\phi ^{\varepsilon }\,(\tau
,X(\tau ;t,x,v)),\;V(t;t,x,v)=v.
\end{split}%
\end{equation}

\begin{lemma}
\label{elr}Assume $0\leq T\leq \frac{1}{\varepsilon }$ and
\begin{equation}
\sup_{0\leq \tau \leq T}\varepsilon ^{k}||h^{\varepsilon }(\tau
)||_{W^{1,\infty }}\leq \sqrt{\varepsilon }.  \label{boot}
\end{equation}%
Then we have
\begin{equation}
\sup_{0\leq t\leq T}\{\Vert \partial _{x}X(t)\Vert _{\infty }+\Vert \partial
_{v}V(t)\Vert _{\infty }\}\leq C,  \label{c1bound}
\end{equation}%
where $C$ is independent of $t,\,\varepsilon $. Moreover, there exists $%
0<T_{0}\leq T$ such that \ for $0\leq \tau \leq t\leq T_{0}$%
\begin{align}
\frac{1}{2}|t-\tau |^{3}\leq \left\vert \det \left( \frac{\partial X(\tau )}{%
\partial v}\right) \right\vert & \leq 2|t-\tau |^{3},\text{ }
\label{detlower} \\
\left\vert \partial _{v}X(\tau )\right\vert & \leq 2|t-\tau |,\text{ }
\label{detbound} \\
\frac{1}{2}\leq \left\vert \det \left( \frac{\partial V(\tau )}{\partial v}%
\right) \right\vert \leq 2,\text{ \ \ \ }\frac{1}{2}\leq \left\vert \det
\left( \frac{\partial X(\tau )}{\partial x}\right) \right\vert & \leq 2,
\label{xvbound} \\
\sup_{0\leq \tau \leq T_{0},x_{0}\in \mathbb{R}^{3},|v|\leq N}\left\{
\int_{|x-x_{0}|\leq CN}(|\partial _{xv}X(\tau ;t,x,v)|^{2}+|\partial
_{vv}X(\tau ;t,x,v)|^{2})dx\right\} ^{1/2}& \leq C_{N}\text{ for }N\geq 1.
\label{w2bound}
\end{align}
\end{lemma}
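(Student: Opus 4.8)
\emph{Proof plan.} The plan is to reduce everything to a uniform bound on the acceleration field $\nabla_x\phi^\varepsilon$ and its first two spatial derivatives along the flow, and then to run Gr\"onwall-type estimates on the first- and second-order variational equations for the Jacobian of the characteristic map. First I would decompose $\nabla_x\phi^\varepsilon=\nabla_x\phi_0+\sum_{i=1}^{2k-1}\varepsilon^i\nabla_x\phi_i+\varepsilon^k\nabla_x\phi_R^\varepsilon$ and bound the pieces separately: from \eqref{decay}, $\|\nabla_x\phi_0(\tau)\|_{W^{2,\infty}}\le C(1+\tau)^{-p}$ with $1<p<3/2$; by the one-more-derivative version of the argument in Lemma~\ref{regularity2}, $\|\nabla_x\phi_i(\tau)\|_{W^{1,\infty}}\le C(1+\tau)^{i-1}$, so that for $0\le\tau\le T\le\tfrac1\varepsilon$ one has $\varepsilon(1+\tau)\le2$ and hence $\sum_{i\ge1}\varepsilon^i\|\nabla_x\phi_i(\tau)\|_{W^{1,\infty}}\le C\varepsilon$; and, using the Poisson equation $\Delta\phi_R^\varepsilon=\int_{\mathbb R^3}\tfrac{\sqrt{\omega_M}}{w}h^\varepsilon\,dv$ together with Calder\'on--Zygmund/Schauder estimates (the Gaussian weight $\tfrac{\sqrt{\omega_M}}{w}$ supplying the $v$-integration) and the bootstrap \eqref{boot}, $\varepsilon^k\|\nabla_x\phi_R^\varepsilon(\tau)\|_{W^{1,\infty}\cap C^{1,\alpha}}\le C\varepsilon^k\|h^\varepsilon(\tau)\|_{W^{1,\infty}}\le C\sqrt\varepsilon$. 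In particular $\phi^\varepsilon\in L^\infty([0,T];C^{2,\alpha})$, so the characteristics \eqref{char} are well defined, and $\|\nabla_x\phi^\varepsilon(\tau)\|_\infty+\|\nabla_x^2\phi^\varepsilon(\tau)\|_\infty\le M(\tau):=C(1+\tau)^{-p}+C\sqrt\varepsilon$ on $[0,T]$.

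For \eqref{c1bound} and \eqref{detlower}--\eqref{xvbound} I differentiate \eqref{char} once: each of $\partial X\in\{\partial_xX,\partial_vX\}$ solves the linear second-order variational equation $\partial_\tau^2(\partial X)=\nabla_x^2\phi^\varepsilon(\tau,X(\tau))\,\partial X$ with $\partial_\tau(\partial X)=\partial V$, with Cauchy data at $\tau=t$ equal to $(\mathbb I,0)$ for $\partial_x$ and $(0,\mathbb I)$ for $\partial_v$. Writing these in Duhamel form backward from $\tau=t$ and running a Gr\"onwall estimate against $M(\tau)$ yields \eqref{c1bound}, namely $\|\partial_xX\|_\infty,\|\partial_xV\|_\infty,\|\partial_vV\|_\infty\le C$ and $\|\partial_vX(\tau)\|_\infty\le C|t-\tau|$, with $C$ independent of $t$ and $\varepsilon$. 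Then I choose $T_0\le T$ so small that $\int_0^{T_0}M(s)\,ds+T_0\le\delta$ for a prescribed small $\delta$ — possible with an $\varepsilon$-independent threshold, since $\int_0^{T_0}M\le CT_0$ — and the same Duhamel identities give, on $[0,T_0]$, $\partial_vX(\tau)=-(t-\tau)\mathbb I+\mathrm{O}(|t-\tau|\,\delta)$, $\partial_vV(\tau)=\mathbb I+\mathrm{O}(\delta)$, $\partial_xX(\tau)=\mathbb I+\mathrm{O}(\delta)$, $\partial_xV(\tau)=\mathrm{O}(\delta)$; multilinearity of the determinant then produces \eqref{detlower}, \eqref{detbound} and \eqref{xvbound}.

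For the $H^1$-in-$x$ bound \eqref{w2bound} I differentiate \eqref{char} twice: for $\bullet,\circ\in\{x,v\}$, $\partial_{\bullet\circ}X$ solves $\partial_\tau^2(\partial_{\bullet\circ}X)=\nabla_x^2\phi^\varepsilon(\tau,X)\,\partial_{\bullet\circ}X+\nabla_x^3\phi^\varepsilon(\tau,X)[\partial_\bullet X,\partial_\circ X]$ with zero data at $\tau=t$. The smooth forcing parts $\nabla_x^3\phi_0$ and $\varepsilon^i\nabla_x^3\phi_i$ are again bounded in $L^\infty$, but $\varepsilon^k\nabla_x^3\phi_R^\varepsilon$ can only be controlled in $L^2_x$: on a fixed ball $B_R$, interior elliptic estimates for $\Delta\phi_R^\varepsilon=\int\tfrac{\sqrt{\omega_M}}{w}h^\varepsilon dv$ reduce $\|\nabla_x^3\phi_R^\varepsilon(s)\|_{L^2(B_R)}$ to $L^\infty$ norms of $\int\tfrac{\sqrt{\omega_M}}{w}h^\varepsilon dv$, of its first $x$-derivative, and of $\nabla\phi_R^\varepsilon$ over $B_{2R}$, all controlled via \eqref{boot} (and, where the nonlocal $\|\nabla\phi_R^\varepsilon\|_\infty$ is needed, the $L^2$ energy of the remainder), so that $\varepsilon^k\|\nabla_x^3\phi_R^\varepsilon(s)\|_{L^2(B_R)}\le C_R$. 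Writing $\partial_{\bullet\circ}X(\tau)$ in backward Duhamel form, taking the $L^2$ norm over $\{|x-x_0|\le CN\}$, and using \eqref{c1bound} ($|\partial_\bullet X|\le C$ on $[0,T_0]$), the only delicate contribution is $\int_\tau^t|s-\tau|\,\|\nabla_x^3\phi^\varepsilon(s,X(s;t,\cdot,v))\|_{L^2_x(|x-x_0|\le CN)}\,ds$; here I change variables $x\mapsto X(s;t,x,v)$, whose Jacobian lies in $[\tfrac12,2]$ on $[0,T_0]$ by \eqref{xvbound} and which maps $\{|x-x_0|\le CN\}$ into a ball $B_{C_NN}$ (again by \eqref{c1bound}), reducing that term to $C\int_\tau^t\|\nabla_x^3\phi^\varepsilon(s)\|_{L^2(B_{C_NN})}\,ds\le C_N$. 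A Gr\"onwall/Minkowski argument in $\tau$ then closes \eqref{w2bound}.

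The crux is this last step. The Poisson coupling feeds $\nabla_x^3\phi_R^\varepsilon$ into the second-order variational equation, and under the bootstrap \eqref{boot} this object is controlled only in $L^2_x$, never pointwise — which is exactly why \eqref{w2bound} is formulated as an $L^2$-over-a-ball estimate rather than an $L^\infty$ estimate, and why the change of variables $x\mapsto X(s;t,x,v)$, legitimate only once the lower determinant bounds \eqref{detlower}/\eqref{xvbound} are in hand on a short interval $[0,T_0]$, is indispensable: it is what transfers the $L^2$ control of $\nabla_x^3\phi_R^\varepsilon$ from Eulerian to Lagrangian variables. A secondary point is that the clean constants $\tfrac12,2$ in \eqref{detlower}--\eqref{xvbound} survive only on a fixed short time $T_0$, so these finer estimates and \eqref{w2bound} are stated only for $\tau\le t\le T_0$, while the coarser boundedness \eqref{c1bound} persists on all of $[0,T]$.
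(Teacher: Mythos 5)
Your proposal is correct and follows essentially the same route as the paper: elliptic regularity for $\nabla_x\phi^{\varepsilon}$ under the bootstrap \eqref{boot}, the second-order variational equation integrated backward from $\tau=t$ (your Duhamel/Gr\"onwall form is the paper's Taylor expansion with Lagrange remainder) for \eqref{c1bound}--\eqref{xvbound}, and for \eqref{w2bound} the same combination of a localized $L^{2}$ elliptic estimate for $\nabla_x^{3}\phi^{\varepsilon}$ from the Poisson equation with the Eulerian-to-Lagrangian change of variables justified by the Jacobian bounds on $[0,T_{0}]$. Your identification of the crux — that $\nabla_x^{3}\phi_R^{\varepsilon}$ is only controlled in $L^{2}_x$, forcing the $L^{2}$-over-a-ball formulation and the change of variables — matches the paper's argument exactly.
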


\begin{proof}
Since $\Delta \phi ^{\varepsilon }=\int (\omega
+\sum_{i=1}^{2k-1}\varepsilon ^{i}F_{i})dv+\varepsilon ^{k}\int \frac{\sqrt{%
\omega _{M}}}{w}h^{\varepsilon }dv-\bar{\rho}$ and by assumption (\ref{boot}),
we obtain that for any $0<\alpha <1$, for $0\leq T\leq \frac{1}{\varepsilon }
$,
\begin{equation}
\Vert \nabla _{x}\phi _{R}^{\varepsilon }\Vert _{C^{1,\alpha }}\leq C\Vert
h^{\varepsilon }\Vert _{W^{1,\infty }}\;\text{ and }\;\Vert \nabla _{x}\phi
^{\varepsilon }\Vert _{C^{1,\alpha }}\leq C+C\varepsilon \mathcal{I}%
_{1}+C\varepsilon ^{k}\Vert h^{\varepsilon }\Vert _{W^{1,\infty }}\leq C.
\label{er}
\end{equation}%
Noting that these characteristics are uniquely determined under the
Lipschitz continuity condition of $\nabla _{x}\phi ^{\varepsilon }$, we \
have for $\partial =\partial _{x}$ or $\partial _{v}$

\begin{equation}
\frac{d^{2}\partial X(\tau ;t,x,v)}{d\tau ^{2}}=\nabla _{xx}\phi
^{\varepsilon }\,(\tau ,X(\tau ;t,x,v))\partial X.  \label{deri}
\end{equation}%
By integrating in time, from the H$\ddot{\text{o}}$lder continuity of $\nabla _{x}\phi
^{\varepsilon }$ as in \eqref{er}, one can deduce that for each $1\leq
i,j\leq 3$,
\begin{equation}
\Vert \frac{\partial X^{j}}{\partial v_{i}}\Vert _{\infty }+\Vert \frac{%
\partial X^{j}}{\partial x_{i}}\Vert _{\infty }+\Vert \frac{\partial V^{j}}{%
\partial v_{i}}\Vert _{\infty }+\Vert \frac{\partial V^{j}}{\partial x_{i}}%
\Vert _{\infty }\leq C(1+\varepsilon \mathcal{I}_{1})+C\varepsilon ^{k}\Vert
h^{\varepsilon }\Vert _{W^{1,\infty }}  \label{XV}
\end{equation}%
so that (\ref{c1bound}) follows.

In order to see the Jacobian of the change of variables : $v\rightarrow X(\tau)$, consider Taylor
expansion of $\frac{\partial X(\tau )}{\partial v}$ in $\tau $ around $t$:
\begin{equation}
\begin{split}
\frac{\partial X(\tau )}{\partial v}& =\frac{\partial X(t)}{\partial v}%
+(\tau -t)\frac{d}{d\tau }\frac{\partial X(\tau )}{\partial v}|_{\tau =t}+%
\frac{(\tau -t)^{2}}{2}\frac{d^{2}}{d\tau ^{2}}\frac{\partial X(\bar{\tau})}{%
\partial v} \\
& =(\tau -t)I+\frac{(\tau -t)^{2}}{2}\frac{d^{2}}{d\tau ^{2}}\frac{\partial
X(\bar{\tau})}{\partial v}
\end{split}
\label{dydv}
\end{equation}%
for $\tau \leq \bar{\tau}\leq t$. The Jacobian matrix $\left( \frac{\partial X(\tau)}{\partial v}\right)$ is given by
\[
\left( \frac{\partial X(\tau)}{\partial v}\right)=
(\tau-t)\left\{I+\tfrac{\tau-t}{2}\left( \frac{d^2}{d\tau^2}\frac{\partial X(\overline{\tau})}{\partial v}\right)
\right\}
\]
or
\[
\begin{split}
 &\left( \frac{\partial X(\tau)}{\partial v}\right)=\left( \begin{array}{ccc}
\partial_{v_1}X^1(\tau) & \partial_{v_2}X^1(\tau) & \partial_{v_3}X^1(\tau)\\
\partial_{v_1}X^2(\tau) & \partial_{v_2}X^2(\tau) & \partial_{v_3}X^2(\tau)\\
\partial_{v_1}X^3(\tau) & \partial_{v_2}X^3(\tau) & \partial_{v_3}X^3(\tau)\\
 \end{array} \right)\\
&=\left( \begin{array}{ccc}\tau-t+\frac{(\tau-t)^2}{2}\frac{d^2}{d\tau^2}
\partial_{v_1}X^1(\overline{\tau}_1) & \frac{(\tau-t)^2}{2}\frac{d^2}{d\tau^2}\partial_{v_2}X^1(\overline{\tau}_2)
 & \frac{(\tau-t)^2}{2}\frac{d^2}{d\tau^2}\partial_{v_3}X^1(\overline{\tau}_3) \\
\frac{(\tau-t)^2}{2}\frac{d^2}{d\tau^2}\partial_{v_1}X^2(\overline{\tau}_4)  & \tau-t+\frac{(\tau-t)^2}{2}\frac{d^2}{d\tau^2}\partial_{v_2}X^2(\overline{\tau}_5) & \frac{(\tau-t)^2}{2}\frac{d^2}{d\tau^2}\partial_{v_3}X^2(\overline{\tau}_6) \\
\frac{(\tau-t)^2}{2}\frac{d^2}{d\tau^2}\partial_{v_1}X^3(\overline{\tau}_7)
& \frac{(\tau-t)^2}{2}\frac{d^2}{d\tau^2}\partial_{v_2}X^3(\overline{\tau}_8)  & \tau-t+\frac{(\tau-t)^2}{2}\frac{d^2}{d\tau^2}\partial_{v_3}X^3(\overline{\tau}_9)\\
 \end{array} \right).\\
\end{split}
\]
We claim that if $T_0$ is sufficiently small, the determinant of the Jacobian is
bounded from below and above by $|t-\tau|^3$.
Note that from \eqref{deri}
\begin{equation}
\left\vert \frac{d^{2}}{d\tau ^{2}}\frac{\partial X(\bar{\tau})}{\partial v}%
\right\vert =\left\vert \frac{\partial }{\partial v}\nabla _{x}\phi
^{\varepsilon }(\overline{\tau },X(\overline{\tau };t,x,v))\right\vert \leq
|\nabla _{x}\nabla _{x}\phi ^{\varepsilon }||\frac{\partial X(\bar{\tau})}{%
\partial v}|\,.  \label{2bound}
\end{equation}%
Now $\Vert \nabla _{x}^{\varepsilon} \phi \Vert _{C^{1,\alpha }}\leq C(1+\varepsilon
^{k}\Vert h^{\varepsilon }\Vert _{W^{1,\infty }})+C\varepsilon \mathcal{I}%
_{1}\leq C$ for $t\leq \frac{1}{\varepsilon },$ and thus by \eqref{XV}, we can choose  $T_0$  sufficiently small so that
$$\left\vert\frac{(\tau -t)}{2}\frac{d^{2}%
}{d\tau ^{2}}\frac{\partial X(\bar{\tau})}{\partial v}\right\vert\leq \frac{CT_{0}}{2}\leq \frac18\,,$$
and in turn
\[
 \frac{|t-\tau|^3}{2}\leq
 \left\vert\det\left( \frac{\partial X(\tau)}{\partial v}\right)\right\vert \leq
\frac{3|t-\tau|^3}{2}\,.
\]
We then deduce both (\ref{detlower}) and (\ref{detbound}).

On the other hand, consider Taylor expansion of $\frac{%
\partial X(\tau )}{\partial x}$ in $\tau $ around $t$:
\begin{equation}
\begin{split}
\frac{\partial X(\tau )}{\partial x}& =\frac{\partial X(t)}{\partial x}%
+(\tau -t)\frac{d}{d\tau }\frac{\partial X(\tau )}{\partial x}|_{\tau =t}+%
\frac{(\tau -t)^{2}}{2}\frac{d^{2}}{d\tau ^{2}}\frac{\partial X(\bar{\tau})}{%
\partial x} \\
& =I+\frac{(\tau -t)^{2}}{2}\frac{d^{2}}{d\tau ^{2}}\frac{\partial X(\bar{%
\tau})}{\partial x}
\end{split}%
\end{equation}%
for $\tau \leq \bar{\tau}\leq t$. Note that from \eqref{deri}, we have
\begin{equation*}
\left\vert \frac{d^{2}}{d\tau ^{2}}\frac{\partial X(\bar{\tau})}{\partial x}%
\right\vert =\left\vert \frac{\partial }{\partial x}\nabla _{x}\phi
^{\varepsilon }(\overline{\tau },X(\overline{\tau };t,x,v))\right\vert \leq
|\nabla _{x}\nabla _{x}\phi ^{\varepsilon }||\frac{\partial X(\bar{\tau})}{%
\partial x}|\,
\end{equation*}%
and thus (\ref{xvbound}) is valid for $\frac{\partial X({\tau})}{\partial x}
$ and for $T_{0}$ small. We also have for $\tau \leq \bar{\tau}\leq t$
\begin{eqnarray*}
\frac{\partial V(\tau )}{\partial v} &=&\frac{\partial V(t)}{\partial v}+%
\frac{d}{d\tau }\frac{\partial V(\bar{\tau})}{\partial v}(\tau -t) \\
&=&I+\frac{\partial }{\partial x}\nabla _{x}\phi ^{\varepsilon }(\bar{\tau})%
\frac{\partial X(\bar{\tau})}{\partial v}(\tau -t).
\end{eqnarray*}%
By (\ref{detbound}),  (\ref{xvbound}) is true for $\frac{\partial V({\tau%
})}{\partial v}$ for $T_{0}$ sufficiently small$.$

To show (\ref{w2bound}), we take one more derivative $\partial =\partial _{x}
$ or $\partial _{v}$ of (\ref{dydv}) to get%
\begin{equation}
\begin{split}
\partial X_v(\tau )& =\partial \partial_v X(t)+(\tau -t)\frac{d}{d\tau }\partial \partial_v
X(\tau )|_{\tau =t}+\frac{(\tau -t)^{2}}{2}\frac{d^{2}}{%
d\tau ^{2}}\partial \partial _vX(\tau ) \\
& =\frac{(\tau -t)^{2}}{2}\frac{d^{2}}{d\tau ^{2}}\partial \partial_v
X(\tau ).
\end{split}%
\end{equation}%
But from (\ref{deri}),
\begin{eqnarray*}
\frac{d^{2}}{d\tau ^{2}}\partial \partial _{v}X(\tau ) &=&\partial \{\nabla
_{xx}\phi ^{\varepsilon }\,(\tau ,X(\tau ;t,x,v))\partial _{v}X\} \\
&=&\nabla _{x}^{3}\phi ^{\varepsilon }\,(\tau ,X(\tau ;t,x,v))\{\partial
_{v}X\}\{\partial X\} \\
&&+\nabla _{x}^{2}\phi ^{\varepsilon }\,(\tau ,X(\tau ;t,x,v))\{\partial
\partial _{v}X(\tau )\}.
\end{eqnarray*}%
We thus conclude that by integrating twice in time:
\begin{eqnarray*}
\|\partial \partial _{v}X(\tau )\|_{L^{2}(|x-x_{0}|\leq N)} &\leq &\frac{%
T_{0}^{2}\|\nabla _{x,v}X\|_{\infty }^{2}}{2}\sup_{0\leq \tau \leq
T_{0}}\|\nabla _{x}^{3}\phi ^{\varepsilon }\,(\tau ,X(\tau
;t,x,v))\|_{L^{2}(|x-x_{0}|\leq N)} \\
&&+\frac{T_{0}^{2}\|\nabla _{x}^{2}\phi ^{\varepsilon }\|_{\infty }}{2}%
\sup_{0\leq \tau \leq T_{0}}\|\partial \partial _{v}X(\tau
)\|_{L^{2}(|x-x_{0}|\leq N)}.
\end{eqnarray*}%
We note that for $|v|\leq N,$ from the characteristic equation,
\begin{eqnarray*}
|X(\tau ;t,x,v)-x_{0}| &\leq &|X(\tau ;t,x,v)-x|+|x-x_{0}|\leq |v|(t-\tau )
\\
&&+\int_{\tau }^{t}\int_{\tau }^{s_{1}}|\nabla _{x}\phi ^{\varepsilon
}(s)|dsds_{1}+CN \\
&\leq &T_{0}N+CT_{0}^{2}+N\leq CN,
\end{eqnarray*}%
for $T_{0}\leq 1$ sufficiently small and $N\geq 1$. From boundedness of $\|\nabla _{x}^{2}\phi ^{\varepsilon
}\|_{\infty }$ and (\ref{xvbound}), we make a change of variables $x\rightarrow X(\tau ;t,x,v)$
in $\nabla^3_x \phi^  {\varepsilon }$ to get
\begin{eqnarray*}
&&\sup_{0\leq \tau \leq T_{0},x_{0}\in \mathbb{R}^{3},|v|\leq N}\|\partial\partial
_{v}X(\tau )\|_{L^{2}(|x-x_0|\leq N)} \\
&\leq &C\sup_{0\leq \tau \leq T_{0},x_{0}\in \mathbb{R}^{3},|v|\leq
N}\|\nabla _{x}^{3}\phi ^{\varepsilon }\,(\tau ,X(\tau
;t,x,v))\|_{L^{2}(|X(\tau ;t,x,v)-x_{0}|\leq CN)} \\
&\leq &C\sup_{0\leq \tau \leq T_{0},x_{0}\in \mathbb{R}^{3},|v|\leq
N}\|\nabla _{x}^{3}\phi ^{\varepsilon }(\tau )\|_{L^{2}(|X(\tau )-x_{0}|\leq
CN)}\left\vert \det \left\{ \frac{dX(\tau ;t,x,v)}{dx}\right\} \right\vert
^{-1/2} \\
&\leq &C\sup_{0\leq \tau \leq T_{0},x_{0}\in \mathbb{R}^{3},|v|\leq
N}\|\nabla _{x}^{3}\phi ^{\varepsilon }(\tau )\|_{L^{2}(|X(\tau )-x_{0}|\leq
CN)},
\end{eqnarray*}%
for $T_{0}$ small. To control $\|\nabla _{x}^{3}\phi
^{\varepsilon }(\tau )\|_{L^{2}(|X(\tau )-x_{0}|\leq CN)},$ we make use of
the Poisson equation $\Delta \phi ^{\varepsilon }=\int (\omega
+\sum_{i=1}^{2k-1}\varepsilon ^{i}F_{i})dv+\varepsilon ^{k}\int \frac{\sqrt{%
\omega _{M}}}{w}h^{\varepsilon }dv-\bar{\rho}.$ Note
\begin{equation*}
\Delta \partial _{x}\phi ^{\varepsilon }=\int (\partial _{x}\omega
+\sum_{i=1}^{2k-1}\varepsilon ^{i}\partial _{x}F_{i})dv+\varepsilon ^{k}\int
\frac{\sqrt{\omega _{M}}}{w}\partial _{x}h^{\varepsilon }dv.
\end{equation*}%
Letting $\chi $ be a smooth cutoff function of $|x-x_{0}|\leq CN+1,$ we have
\begin{equation*}
\Delta \partial _{x}\{\chi \phi ^{\varepsilon }\}=\chi \int (\partial
_{x}\omega +\sum_{i=1}^{2k-1}\varepsilon ^{i}\partial
_{x}F_{i})dv+\varepsilon ^{k}\chi\int \frac{\sqrt{\omega _{M}}}{w}\partial
_{x}h^{\varepsilon }dv+\sum_{|\alpha+\beta|=3,\,|\beta |\leq 2}\partial ^{\alpha }\chi \partial
^{\beta }\phi ^{\varepsilon }.
\end{equation*}%
It thus follows that, from the assumption (\ref{boot}), and the fact $%
\partial _{x}\omega ,\partial _{x}F_{i}\in L^{2},$ we conclude
\begin{equation*}
\|\nabla _{x}^{3}\phi ^{\varepsilon }(\tau ,x)\|_{L^{2}(|x-x_{0}|\leq
CN)}\leq C+CN^{3/2}\varepsilon ^{k}\|h^{\varepsilon }\|_{W^{1,\infty
}}+C\|\phi ^{\varepsilon }\|_{C^{2}}N^{3/2}\leq CN^{3/2}.
\end{equation*}%
We then complete the proof of (\ref{w2bound}).
\end{proof}

\section{$W^{1,\infty }$ Estimates for Remainder $F_{R}^{\protect\varepsilon%
} $}

In this section we establish $W^{1,\infty }$ estimate for $h^{\varepsilon }$
with suitable factors of $\varepsilon $. To be more precise, we will show
that for sufficiently small $\varepsilon $, $\Vert \varepsilon
^{3/2}h^{\varepsilon }\Vert _{\infty }$ and $\Vert \varepsilon ^{9/2}\nabla
_{x,v}h^{\varepsilon }\Vert _{\infty }$ are bounded by $\Vert f^{\varepsilon
}\Vert $ and initial data. Recall $\mathcal{I}_{1}$ and $\mathcal{I}_{2}$ in %
\eqref{I}.

We now turn to the main estimates of $h^{\varepsilon }$. As the first
preparation, we define
\begin{equation}
L_{M}g=-\frac{1}{\sqrt{\omega _{M}}}\{Q(\omega ,\sqrt{\omega _{M}}g)+Q(\sqrt{%
\omega _{M}}g,\omega )\}=\{\nu (\omega )+K_{M}\}g  \label{K_M}
\end{equation}%
as in \cite{C}. Letting $K_{M,w}g\equiv wK_{M}(\frac{g}{w}),$ from (\ref{F_R}%
) and (\ref{h}), we obtain
\begin{equation}  \label{he}
\begin{split}
& \partial _{t}h^{\varepsilon }+v\cdot \nabla _{x}h^{\varepsilon }+\nabla
_{x}\phi ^{\varepsilon }\cdot \nabla _{v}h^{\varepsilon }+\frac{\nu (\omega )%
}{\varepsilon }h^{\varepsilon }+\frac{1}{\varepsilon }K_{M,w}h^{\varepsilon }
\\
& =\frac{\varepsilon ^{k-1}w}{\sqrt{\omega _{M}}}Q(\frac{h^{\varepsilon }%
\sqrt{\omega _{M}}}{w},\frac{h^{\varepsilon }\sqrt{\omega _{M}}}{w}%
)+\sum_{i=1}^{2k-1}\varepsilon ^{i-1}\frac{w}{\sqrt{\omega _{M}}}\{Q(F_{i},%
\frac{h^{\varepsilon }\sqrt{\omega _{M}}}{w})+Q(\frac{h^{\varepsilon }\sqrt{%
\omega _{M}}}{w},F_{i})\} \\
& \quad -\nabla _{x}\phi ^{\varepsilon }\cdot \frac{w}{\sqrt{\omega _{M}}}%
\nabla _{v}(\frac{\sqrt{\omega _{M}}}{w})h^{\varepsilon }-\nabla _{x}\phi
_{R}^{\varepsilon }\cdot \frac{w}{\sqrt{\omega _{M}}}\nabla _{v}(\omega
+\sum_{i=1}^{2k-1}\varepsilon ^{i}F_{i})+\varepsilon ^{k-1}\frac{w}{\sqrt{%
\omega _{M}}}A
\end{split}%
\end{equation}%
where $\nabla \phi ^{\varepsilon }=\sum_{n=0}^{2k-1}\varepsilon ^{n}\nabla
\phi _{n}+\varepsilon ^{k}\nabla \phi _{R}^{\varepsilon }$. Our main task is
to derive $W^{1,\infty }$ estimates of $h^{\varepsilon }$:

\begin{prop}
\label{prop} Let $0<T\leq \frac{1}{\varepsilon }$ be given and the electric
fields $\nabla \phi _{R}^{\varepsilon }$ and $\nabla \phi ^{\varepsilon }$
satisfy the estimates \eqref{er}. For all $\varepsilon $ sufficiently small,
there exists a constant $C>0$ independent of $T$ and $\varepsilon $ such
that
\begin{equation}
\sup_{0\leq s\leq T}\{\varepsilon ^{3/2}\Vert h^{\varepsilon }(s)\Vert
_{\infty }\}\leq C\{\Vert \varepsilon ^{3/2}h_{0}\Vert _{\infty
}+\sup_{0\leq s\leq T}\Vert f^{\varepsilon }(s)\Vert +\varepsilon
^{(2k+1)/2}\},  \label{5.6}
\end{equation}%
as well as
\begin{equation}
\begin{split}
\sup_{0\leq s\leq T}\{\varepsilon ^{3/2}\Vert (1+|v|)&h^{\varepsilon
}(s)\Vert _{\infty } +\varepsilon ^{5}\Vert \nabla _{x,v}h^{\varepsilon
}(s)\Vert _{\infty }\} \\
& \leq C\{\Vert \varepsilon ^{3/2}(1+|v|)h_{0}\Vert _{\infty }+\Vert
\varepsilon ^{5}\nabla _{x,v}h_{0}\Vert _{\infty }+\sup_{0\leq s\leq T}\Vert
f^{\varepsilon }(s)\Vert +\varepsilon ^{(2k+1)/2}\}\,.
\end{split}
\label{5.7}
\end{equation}
\end{prop}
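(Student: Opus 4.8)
The plan is to prove \eqref{5.6} and \eqref{5.7} together by the $L^{\infty}$-along-characteristics method, running a continuity argument in which the bootstrap hypothesis \eqref{boot} is in force (so that Lemma \ref{elr} applies and $\nabla_{x}\phi^{\varepsilon}$ obeys \eqref{er}), and treating $\sup_{0\le s\le T}\Vert f^{\varepsilon}(s)\Vert$ as a quantity to be controlled separately by Proposition \ref{L2}. First I would put \eqref{he} in mild form along the characteristics $[X(\tau;t,x,v),V(\tau;t,x,v)]$ of \eqref{char}: integrating the transport operator together with the damping $\tfrac{\nu(\omega)}{\varepsilon}$ produces the factor $\exp\{-\tfrac1\varepsilon\int_\tau^t\nu(\omega)(\sigma,X(\sigma),V(\sigma))\,d\sigma\}\le e^{-\nu_{0}(t-\tau)/\varepsilon}$, where $\nu_{0}=\inf\nu(\omega)>0$ by \eqref{decay}. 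This exponential localizes every backward time integral to a window $t-\tau=O(\varepsilon\,|\log\varepsilon|)\ll T_{0}$, so that all changes of variables below only use the Jacobian bounds of Lemma \ref{elr} on intervals of length $\lesssim\varepsilon$ and no subdivision of $[0,T]$ is needed. The terms $\varepsilon^{i-1}(\cdots)$ and $\varepsilon^{k-1}\tfrac{w}{\sqrt{\omega_{M}}}A$ on the right of \eqref{he}, inserted into this mild formula and estimated via Lemma \ref{regularity2} together with $T\le\varepsilon^{-m}$, are bounded in the weighted $L^{\infty}$ norm and, after multiplication by $\varepsilon^{3/2}$, furnish the $\varepsilon^{(2k+1)/2}$ term; the transported initial datum gives $\Vert\varepsilon^{3/2}h_{0}\Vert_{\infty}$.

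The core of \eqref{5.6} is the term $-\tfrac1\varepsilon\int e^{-\cdots}K_{M,w}h^{\varepsilon}$, which cannot be absorbed directly. I would substitute the mild formula into itself once more (double Duhamel), producing $\tfrac1{\varepsilon^{2}}\iint e^{-\cdots}e^{-\cdots}\iint k_{w}(V(s),v')\,k_{w}(V'(s'),v'')\,h^{\varepsilon}(s',X'(s'),v'')$, and then split: on $\{|v'|\ge N\}\cup\{|v''|\ge N\}$ the Grad-type bound on $k_{w}$ makes the kernel contribution $O(1/N)\sup\Vert h^{\varepsilon}\Vert_{\infty}$; on $\{|v'|,|v''|\le N,\ |s-s'|\ge\varepsilon\}$ one changes variables $v'\mapsto X'(s')$, whose Jacobian satisfies $|\det(\partial X'(s')/\partial v')|\ge\tfrac12|s-s'|^{3}$ by \eqref{detlower}, and combines this with Cauchy--Schwarz in $(y,v'')$ and the identity $h^{\varepsilon}=\tfrac{w\sqrt\omega}{\sqrt{\omega_{M}}}f^{\varepsilon}$ (on the bounded velocity set the weight ratio is harmless) to obtain, after $\int_{\varepsilon}^{\infty}\sigma^{-3/2}e^{-\nu_{0}\sigma/\varepsilon}\,d\sigma\lesssim\varepsilon^{-1/2}$, a bound $\lesssim_{N}\varepsilon^{-3/2}\sup_{s'}\Vert f^{\varepsilon}(s')\Vert$; the remaining small-time, bounded-velocity piece is kept in $L^{\infty}$ and, by a further velocity localization and the smallness of $\varepsilon$, is reduced to a fraction of $\sup\Vert h^{\varepsilon}\Vert_{\infty}$. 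Multiplying through by $\varepsilon^{3/2}$ and absorbing the residual $\sup\Vert h^{\varepsilon}\Vert_{\infty}$ into the left side (with $N$ large, $\varepsilon$ small) yields \eqref{5.6}.

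For \eqref{5.7} I differentiate \eqref{he} in $x$ and $v$. Pulling the derivatives back along characteristics introduces $\partial_{x,v}X$, $\partial_{x,v}V$ (bounded by \eqref{c1bound}, \eqref{xvbound}) and the second-order $\partial\partial_{v}X$, controlled in $L^{2}_{x}$ over balls by \eqref{w2bound}; the genuinely new forcing terms are the singular commutators $\tfrac1\varepsilon(\nabla_{x,v}\nu)h^{\varepsilon}$ and $\tfrac1\varepsilon(\nabla_{x,v}K_{M,w})h^{\varepsilon}$, which are of lower order and, by \eqref{5.6}, bounded by $\varepsilon^{-1}\cdot\varepsilon^{-3/2}\sup\Vert f^{\varepsilon}\Vert$, together with the field terms $\nabla_{xx}\phi^{\varepsilon}\!\cdot\!\nabla_{v}h^{\varepsilon}$ and $\nabla_{xx}\phi_{R}^{\varepsilon}\!\cdot\!\nabla_{v}(\omega+\sum\varepsilon^{i}F_{i})$, controlled by \eqref{er} and Lemma \ref{regularity2}. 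I then run the same double-Duhamel machine on the equation for $\nabla_{x,v}h^{\varepsilon}$, integrating by parts in the iterated kernel term to move the derivative off $h^{\varepsilon}$ onto the smooth, rapidly decaying kernel and onto the Jacobian factors, so that the transferred quantity is again $\int|h^{\varepsilon}|$ over an $x$-ball, controlled by \eqref{5.6}. Accumulating the powers of $\varepsilon$---one $\varepsilon^{-1}$ from the differentiated damping, the $\varepsilon^{-3/2}$ from \eqref{5.6}, a further $\varepsilon^{-3/2}$ from the $|s-s'|^{-3/2}$ in the change of variables, and the extra $\varepsilon^{-1}$ from the differentiated $\tfrac1\varepsilon K_{M,w}$---produces the weight $\varepsilon^{5}$, and the source terms $\varepsilon^{k-1}\nabla A$ stay bounded precisely because $k\ge6$.

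The main obstacle, and the point this section is really about, is the change-of-variables step in the presence of the curved trajectory. Unlike the pure compressible Euler limit, the characteristics are bent by $\nabla_{x}\phi^{\varepsilon}$, so the nondegeneracy $|\det(\partial X(\tau)/\partial v)|\gtrsim|t-\tau|^{3}$ holds only on the short interval furnished by Lemma \ref{elr} and only under the a priori bound \eqref{boot}---which is exactly what we are proving---and the $W^{1,\infty}$ estimate additionally needs the $L^{2}_{x}$ control \eqref{w2bound} of $\partial\partial_{v}X$, itself extracted from the Poisson equation for $\phi^{\varepsilon}$. Because the doubly-iterated kernel leaves residual top-order norms $\sup\Vert h^{\varepsilon}\Vert_{\infty}$ and $\sup\Vert\nabla_{x,v}h^{\varepsilon}\Vert_{\infty}$ that a na\"{\i}ve $L^{2}$--$L^{\infty}$ interplay cannot remove, one must close \eqref{5.6} and \eqref{5.7} simultaneously as a coupled continuity argument, and the verification that this coupling, fed back into Proposition \ref{L2}, still closes over the time scale $\varepsilon^{-\frac12\frac{2k-3}{2k-2}}$ is what ultimately pins down the admissible range of $m$.
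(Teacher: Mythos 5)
Your core machinery is the same as the paper's: Duhamel along the curved characteristics, a second iteration of the compact part $K_{M,w}$, splitting in velocity ($|v|\ge N$, $|v'|\ge 2N$, etc.), the change of variables $v'\mapsto X(s_{1};s,X(s),v')$ with the Jacobian lower bound \eqref{detlower}, Cauchy--Schwarz down to $\Vert f^{\varepsilon}\Vert$, and integration by parts on the iterated kernel for the derivative estimates. The genuine gap is in how you pass from a short time interval to all of $[0,T]$. The paper's proof of this proposition \emph{is} the time-extension step: Lemmas \ref{Linfty} and \ref{Winfty} are only proved on $[0,T_{0}]$ with $T_{0}$ small (because the Taylor-expansion argument behind \eqref{detlower}, \eqref{xvbound}, \eqref{w2bound} needs $0\le\tau\le t\le T_{0}$, including several absorptions of the form $CT_{0}\le\frac18$), and the extra content of those lemmas --- the halving estimates \eqref{T_0} and \eqref{5.11}, obtained by working with the weighted norm $e^{\nu_{0}s/2\varepsilon}\Vert\cdot\Vert_{\infty}$ --- exists precisely so that one can iterate over $\sim T/T_{0}$ subintervals and sum the geometric series $1+\tfrac12+\tfrac14+\cdots\le 2$ without the constant degenerating like $C^{T/T_{0}}$. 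You replace all of this with the assertion that the damping $e^{-\nu_{0}(t-s)/\varepsilon}$ localizes every backward integral to a window of length $O(\varepsilon|\log\varepsilon|)$ so that ``no subdivision of $[0,T]$ is needed.'' That localization does kill the contribution from $s-s_{1}>T_{0}$, but it does not by itself give you the Jacobian and $L^{2}_{x}$ trajectory bounds on the windows $[s-T_{0},s]$ for arbitrary $s\in[0,T]$ with $T$ up to $\varepsilon^{-1}$: Lemma \ref{elr} as stated is anchored at $t\le T_{0}$, and you would have to restate and reprove it in time-translated form (which is true but is exactly the work you are skipping). As written, your argument only reaches times up to $T_{0}$, and you neither invoke the paper's contraction-and-iterate mechanism nor supply a substitute for it.

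A secondary point: you undersell the fourth line of the Duhamel formula in the $W^{1,\infty}$ step. The commutator $\tfrac1\varepsilon D_{x}(K_{M,w}h^{\varepsilon})$ is not simply ``lower order, bounded by $\varepsilon^{-1}\cdot\varepsilon^{-3/2}\sup\Vert f^{\varepsilon}\Vert$.'' The piece where $D_{x}$ hits the kernel satisfies $|(D_{x}l_{M,w})(v,v')|\le C\nu_{M}(1+|v-v'|)l_{M,w}(v,v')(1+|v'|)$, i.e.\ it carries an extra velocity weight; controlling it forces one to rerun the entire $L^{\infty}$ scheme for $\widetilde h^{\varepsilon}=(1+|v|)h^{\varepsilon}$ with the heavier weight $w_{1}=(1+|v|)w$, which is the only reason $\Vert(1+|v|)h_{0}\Vert_{\infty}$ appears in \eqref{5.7} at all. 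The piece where $D_{x}$ hits $h^{\varepsilon}$ is the one requiring the change of variables plus integration by parts, and that step produces both the boundary term of size $\varepsilon^{-3}\Vert h^{\varepsilon}\Vert_{\infty}$ and the term $D_{x}(\det(dv'/dy))$, which is controlled only in $L^{2}_{x}$ over balls through \eqref{w2bound} and is what generates the $\varepsilon^{-4}\sup\Vert f^{\varepsilon}\Vert$ (hence the weight $\varepsilon^{5}$). For the $D_{v}$ derivative there is a further wrinkle you omit: one must invert $\partial V(s)/\partial v$ and split $K_{M,w}$ as $K^{1}_{M,w}+K^{2}_{M,w}\partial_{v}$ before integrating by parts in $v''$. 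Your power counting happens to land on $\varepsilon^{5}$, but the mechanisms producing those powers are not the ones you name.
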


The proof of the proposition relies on the following two lemmas:

\begin{lemma}
\label{Linfty} Assume (\ref{boot}). There exists a $T_{0}>0$ such that $%
0\leq T_{0}\leq T\leq \frac{1}{\varepsilon }$ for all $\varepsilon $
sufficiently small, 
\begin{equation}
\sup_{0\leq s\leq T_{0}}\{\varepsilon ^{3/2}||h^{\varepsilon }(s)||_{\infty
}\}\leq C\{||\varepsilon ^{3/2}h_{0}||_{\infty }+\sup_{0\leq s\leq
T}||f^{\varepsilon }(s)||+\varepsilon ^{(2k+1)/2}\},  \label{basic}
\end{equation}%
and moreover,
\begin{equation}  \label{T_0}
\varepsilon ^{3/2}\Vert h^{\varepsilon }(T_{0})\Vert _{\infty }\leq \frac{1}{%
2}\Vert \varepsilon ^{3/2}h_{0}\Vert _{\infty }+C\{\sup_{0\leq s\leq T}\Vert
f^{\varepsilon }(s)\Vert +\varepsilon ^{(2k+1)/2}\}.
\end{equation}
\end{lemma}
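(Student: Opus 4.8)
The plan is to integrate the equation \eqref{he} for $h^{\varepsilon}$ along the characteristics \eqref{char} over a short time interval $[0,T_0]$ with $T_0$ the constant produced in Lemma \ref{elr}, and to use Duhamel's formula with respect to the transport-plus-collision-frequency semigroup to produce an integral identity of the form
\begin{equation*}
h^{\varepsilon}(t,x,v)=e^{-\frac{1}{\varepsilon}\int_0^t \nu(\omega)(s)\,ds}h_0(X(0),V(0))+\frac{1}{\varepsilon}\int_0^t e^{-\frac{1}{\varepsilon}\int_s^t\nu}\,\big[(\text{rest of RHS of }\eqref{he})\big](s,X(s),V(s))\,ds.
\end{equation*}
First I would dispose of the genuinely small terms: the quadratic $Q$-term carries $\varepsilon^{k-1}$, the electric-field self-interaction terms carry $\varepsilon^k$ and $\varepsilon^{k-1}$ times $\|h^{\varepsilon}\|_{W^{1,\infty}}$ (controlled by \eqref{boot}), and the $A$-term carries $\varepsilon^{k-1}$ times the polynomially-growing $F_i$ bounds from Lemma \ref{regularity2}; together with the prefactor $\frac{1}{\varepsilon}\int_0^t e^{-\frac{1}{\varepsilon}\int\nu}\le C$ these contribute $O(\varepsilon^{(2k+1)/2})$ after multiplying through by $\varepsilon^{3/2}$ (one uses $T_0\le 1/\varepsilon$ to absorb the time growth). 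The linear terms $\sum_i\varepsilon^{i-1}\{Q(F_i,\cdot)+Q(\cdot,F_i)\}$ and the $K_{M,w}$ term must be handled more carefully, since $K_{M,w}$ is not small; this is where the weight $w=(1+|v|^2)^\beta$ matters — $K_{M,w}$ is a bounded operator on $L^\infty$ with small "tail," and its contribution can be split into a piece absorbed by iterating Duhamel once more (the standard Grad-type double-Duhamel/Vidav argument) and a piece estimated by $\|f^{\varepsilon}\|$ through the change of variables in Lemma \ref{elr}.

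The heart of the matter is extracting $\sup_s\|f^{\varepsilon}(s)\|$ — an $L^2$ quantity — from the $L^\infty$ iteration, and this is where \eqref{detlower}, \eqref{xvbound} and \eqref{w2bound} in Lemma \ref{elr} are used. After the second Duhamel iteration, the dominant contribution is a double time integral of $K_{M,w}$ acting twice; one then passes from the $K_{M,w}$ kernel (which has a Gaussian-type decay inherited from $\omega_M$) acting on $h^{\varepsilon}$ to an integral of $|h^{\varepsilon}|=|f^{\varepsilon}|\cdot\frac{w\sqrt{\omega}}{\sqrt{\omega_M}}$, and changes variables $v'\mapsto X(s';s,\cdot,v')$ along the backward trajectory. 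By \eqref{detlower} the Jacobian of this map is bounded below by $\frac12|s-s'|^3$, which produces an integrable singularity $|s-s'|^{-3/2}$ after Cauchy-Schwarz — this is exactly why the exponent $3/2$ appears in $\varepsilon^{3/2}\|h^{\varepsilon}\|_\infty$, the lost powers of $\varepsilon$ coming from $\frac{1}{\varepsilon}\int_0^{T_0}|s-s'|^{-3/2}\,ds'$ type estimates together with the $1/\varepsilon$ in front of $K_{M,w}$. The outcome is a bound of $\varepsilon^{3/2}\|h^{\varepsilon}(t)\|_\infty$ by $\varepsilon^{3/2}\|h_0\|_\infty$ (from the free-streaming term), plus $C\sup_s\|f^{\varepsilon}(s)\|$, plus the $O(\varepsilon^{(2k+1)/2})$ error, which is \eqref{basic}.

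Finally, \eqref{T_0} is obtained by choosing $T_0$ once more, slightly smaller if necessary: the free-streaming term $e^{-\frac{1}{\varepsilon}\int_0^{T_0}\nu}\,\varepsilon^{3/2}\|h_0\|_\infty$ carries the exponentially small factor $e^{-\nu_0 T_0/\varepsilon}\le\frac12$ for $\varepsilon$ small (since $\nu\ge\nu_0>0$ and $T_0$ is an $\varepsilon$-independent constant), while every other term in the Duhamel expansion is already of the form $C\{\sup_s\|f^{\varepsilon}(s)\|+\varepsilon^{(2k+1)/2}\}$; this gives the stated contraction factor $\frac12$ in front of $\|\varepsilon^{3/2}h_0\|_\infty$. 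The main obstacle I anticipate is the bookkeeping in the double-Duhamel step: isolating the "near-grazing" region $|v-v'|$ small (or $|s-s'|$ small) where the change-of-variables degenerates, showing the $K_{M,w}$ kernel is small there, and verifying that the $W^{2,\infty}$-in-$x$ bound \eqref{w2bound} on the trajectory — needed because $\nabla_x^3\phi^{\varepsilon}$ enters through the second derivatives of $X$ — is enough to make the change of variables in the $L^2$ estimate legitimate uniformly in $(t,x,v)$ on $|v|\le N$ with $N\sim 1/\sqrt{\varepsilon}$.
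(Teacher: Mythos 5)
Your plan follows the paper's proof essentially step by step: Duhamel along the curved characteristics, disposal of the $\varepsilon^{k-1}$-weighted, electric-field, and $F_i$-weighted terms, a second Duhamel iteration for the $\frac{1}{\varepsilon}K_{M,w}$ term, and the change of variables $v'\mapsto X(s_1;s,X(s),v')$ via \eqref{detlower} to trade the $L^\infty$ iteration for $\frac{C_{N,\kappa}}{\varepsilon^{3/2}}\sup_s\|f^\varepsilon(s)\|$, with \eqref{T_0} obtained from the exponentially small factor $e^{-\nu_0 T_0/2\varepsilon}$ multiplying the initial-data term. One caveat: $|s-s_1|^{-3/2}$ is \emph{not} an integrable singularity, so the cutoff $s-s_1\ge\kappa\varepsilon$ — with the complementary region controlled by the smallness of its measure (a factor $\kappa$), not by smallness of the kernel — is an essential structural step rather than bookkeeping; you correctly flag this region as needing separate treatment, and this split (Cases 3a/3b, with fixed $\varepsilon$-independent $N$) is exactly what the paper does.
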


\begin{lemma}
\label{Winfty} For $T_{0}>0$ obtained in Lemma \ref{Linfty}, there exists a
sufficiently small $\varepsilon _{0}>0$ such that for all $\varepsilon \leq
\varepsilon _{0}$, 
\begin{equation}
\begin{split}  \label{5.10}
\sup_{0\leq s\leq T_{0}}\{\varepsilon ^{5}\Vert Dh^{\varepsilon }(s)\Vert
_{\infty }+\varepsilon ^{5}\Vert D_{v}h^{\varepsilon }(s)\Vert _{\infty }\}&
\leq C\{\varepsilon ^{5}\Vert (1+|v|)h_{0}\Vert _{\infty }+\varepsilon
^{5}\Vert Dh_{0}\Vert _{\infty }+\varepsilon ^{5}\Vert D_{v}h_{0}\Vert
_{\infty } \\
& +\varepsilon ^{3/2}\Vert h_{0}\Vert _{\infty }+\varepsilon
^{1/2}\sup_{0\leq s\leq T}\Vert f^{\varepsilon }(s)\Vert +\varepsilon
^{k+1}\},
\end{split}%
\end{equation}%
and moreover,
\begin{equation}  \label{5.11}
\begin{split}
\varepsilon ^{5}\Vert Dh^{\varepsilon }(T_{0})\Vert _{\infty }+\varepsilon
^{5}\Vert D_{v}h^{\varepsilon }(T_{0})\Vert _{\infty }& \leq \frac{1}{2}%
\{\varepsilon ^{5}\Vert (1+|v|)h_{0}\Vert _{\infty }+\varepsilon ^{5}\Vert
Dh_{0}\Vert _{\infty }+\varepsilon ^{5}\Vert D_{v}h_{0}\Vert _{\infty
}+\varepsilon ^{3/2}\Vert h_{0}\Vert _{\infty }\} \\
& +C\{\varepsilon ^{1/2}\sup_{0\leq s\leq T}\Vert f^{\varepsilon }(s)\Vert
+\varepsilon ^{k+1}\}.
\end{split}%
\end{equation}
\end{lemma}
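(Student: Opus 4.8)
The plan is to differentiate the mild form of \eqref{he} along the curved characteristics \eqref{char} and run a Vidav--Guo type $L^{\infty}$ iteration for the pair $(\nabla _{x}h^{\varepsilon },\nabla _{v}h^{\varepsilon })$, with the trajectory bounds of Lemma \ref{elr} playing the role that the free–transport Jacobian plays in the flat case. First I would apply $\nabla _{x}$ and $\nabla _{v}$ to \eqref{he}. These operators commute with $\partial _{t}+v\cdot \nabla _{x}+\nabla _{x}\phi ^{\varepsilon }\cdot \nabla _{v}$ up to the commutator $[\nabla _{v},v\cdot \nabla _{x}]=\nabla _{x}$, which injects $\nabla _{x}h^{\varepsilon }$ as a source into the $\nabla _{v}h^{\varepsilon }$–equation, and $[\nabla _{x},\nabla _{x}\phi ^{\varepsilon }\cdot \nabla _{v}]=\nabla _{xx}^{2}\phi ^{\varepsilon }\cdot \nabla _{v}$, the curvature term; both are controlled since $\Vert \nabla _{xx}^{2}\phi ^{\varepsilon }\Vert _{\infty }\leq C$ by \eqref{er}. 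The genuinely dangerous new terms are: (i) $\frac{1}{\varepsilon }(\nabla _{x,v}\nu )h^{\varepsilon }$ and $\frac{1}{\varepsilon }(\nabla _{x,v}K_{M,w})h^{\varepsilon }$, which are $O(\varepsilon ^{-1})$ but act only on $h^{\varepsilon }$ itself, so I would control them by inserting the $L^{\infty }$ bound \eqref{basic} of Lemma \ref{Linfty}, already available on $[0,T_{0}]$; (ii) $\frac{1}{\varepsilon }K_{M,w}(\nabla _{x,v}h^{\varepsilon })$, the collision term on the unknown derivatives; and (iii) $\nabla _{x,v}$ hitting the forcing, i.e. $\nabla _{x,v}\Gamma (F_{i},\cdot )$, $\nabla _{v}(\omega +\sum \varepsilon ^{i}F_{i})$, $\nabla _{x}\phi _{i}$, $A$ and $\nabla _{x}\phi _{R}^{\varepsilon }$, which I would bound using the weighted polynomial-growth estimates \eqref{F_i} of Lemma \ref{regularity2} together with the elliptic bound $\Vert \nabla _{xx}^{2}\phi _{R}^{\varepsilon }\Vert _{\infty }\lesssim \Vert h^{\varepsilon }\Vert _{W^{1,\infty }}$ and \eqref{er}. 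Since differentiating the transport and collision operators produces an extra power of $|v|$, the iteration must be run in the weight $w(v)=(1+|v|^{2})^{\beta }$, $\beta \geq 7/2$, and with an auxiliary $(1+|v|)$-weighted $L^{\infty }$ bound on $h^{\varepsilon }$ itself; this is why $\Vert (1+|v|)h_{0}\Vert _{\infty }$ appears on the right of \eqref{5.10}.

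The core estimate is for (ii). Writing $\nabla _{x,v}h^{\varepsilon }(t)=e^{-\frac{1}{\varepsilon }\int _{0}^{t}\nu }\nabla _{x,v}h_{0}+\int _{0}^{t}e^{-\frac{1}{\varepsilon }\int _{s}^{t}\nu }\{-\frac{1}{\varepsilon }K_{M,w}(\nabla _{x,v}h^{\varepsilon })+(\text{sources})\}(s,X(s),V(s))\,ds$, I would split $K_{M,w}=K_{M,w}^{\leq N}+K_{M,w}^{>N}$ into a small-norm part (absorbed using $\frac{1}{\varepsilon }\int _{0}^{t}e^{-\nu (t-s)/\varepsilon }ds\leq C\nu ^{-1}$ times a small constant) and a part with bounded, compactly supported kernel, which is iterated once more. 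In the resulting double-collision term the inner profile is evaluated at a point whose spatial component depends on the intermediate velocity; the change of variables of that velocity into position has Jacobian bounded below by $\tfrac{1}{2}|s-s'|^{3}$ by \eqref{detlower}, so after a short-time cutoff $|s-s'|\leq \delta $ (handled crudely in $L^{\infty }$) one converts the long-range part into the $L^{2}$ energy $\Vert f^{\varepsilon }(s')\Vert $, at the price of negative powers of $\varepsilon $ coming from the $\varepsilon ^{-2}$ prefactor and the $|s-s'|^{-3/2}$ singularity; this bookkeeping produces the factor $\varepsilon ^{5}$ in front of $\Vert \nabla _{x,v}h^{\varepsilon }\Vert _{\infty }$ and the $\varepsilon ^{1/2}\sup \Vert f^{\varepsilon }\Vert $ term. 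The derivative $\nabla _{x,v}h^{\varepsilon }$ surviving this procedure (from the short-time part of (ii) and from the curvature/commutator sources) carries a coefficient of size $O(T_{0})+o_{\varepsilon }(1)$, so taking $T_{0}$ small and then $\varepsilon \leq \varepsilon _{0}$ lets it be absorbed into the left side, giving \eqref{5.10}; the decay factor $\tfrac{1}{2}$ in \eqref{5.11} follows by shrinking $T_{0}$ further so that $e^{-\frac{1}{\varepsilon }\int _{0}^{T_{0}}\nu }\leq \tfrac{1}{2}$ on the initial-data contribution while all source contributions remain of the stated size.

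The $(1+|v|)$-weighted $L^{\infty }$ bound on $h^{\varepsilon }$ is obtained in the same sweep: repeating the argument of Lemma \ref{Linfty} with weight $(1+|v|)w(v)$, the only new point is that the extra $|v|$ is harmless because the forcing carries Gaussian decay by Lemma \ref{regularity2} and the collision-kernel estimates of \cite{g4} still apply. Finally I would check that throughout this argument the bootstrap \eqref{boot} underlying Lemma \ref{elr} stays consistent: the output $\Vert h^{\varepsilon }\Vert _{W^{1,\infty }}\lesssim \varepsilon ^{-5}(\dots )$ together with $k\geq 6$ keeps $\varepsilon ^{k}\Vert h^{\varepsilon }\Vert _{W^{1,\infty }}\leq \sqrt{\varepsilon }$, so the trajectory estimates used are legitimate.

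The main obstacle — and the reason a naive $L^{2}$--$L^{\infty }$ interplay fails here, unlike in the pure compressible Euler limit — is item (ii): there is no a priori $L^{2}$ control of the derivatives $\nabla _{x,v}f^{\varepsilon }$, so the $\nabla _{x,v}h^{\varepsilon }$ generated by $\frac{1}{\varepsilon }K_{M,w}$ and by the commutator $[\nabla _{v},v\cdot \nabla _{x}]$ cannot simply be traded for energy; one must close the estimate self-referentially on the short interval $[0,T_{0}]$, balancing the $\varepsilon ^{-1}$ and $\varepsilon ^{-2}$ singularities against the smallness of $T_{0}$ and $\varepsilon $ and against the degeneracy $|\det (\partial X/\partial v)|\sim |s-s'|^{3}$ of the curved-trajectory Jacobian. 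This is exactly what forces the large power $\varepsilon ^{5}$ and, through the bootstrap, the requirement $k\geq 6$.
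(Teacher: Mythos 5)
Your overall architecture (Duhamel along the curved characteristics, commutator bookkeeping, a double iteration of the compact part of $K_{M,w}$, change of variables $v^{\prime }\rightarrow y=X(s_{1})$ via \eqref{detlower}, an auxiliary $(1+|v|)$--weighted norm, and absorption for $T_{0}$ and $\varepsilon $ small) matches the paper's. But there is a genuine gap at the step you yourself flag as the main obstacle, and your proposed resolution of it does not work. After the double iteration, the inner profile in the remaining case ($|v|\leq N$, $|v^{\prime }|\leq 2N$, $|v^{\prime \prime }|\leq 3N$, $s-s_{1}\geq \kappa \varepsilon $) is $D_{x}h^{\varepsilon }(s_{1},X(s_{1}),v^{\prime \prime })$, i.e.\ still a \emph{derivative}. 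The change of variables \eqref{detlower} converts an integral of the inner profile over $v^{\prime }$ into an integral over $y$, so it can only trade $D_{x}h^{\varepsilon }$ for the $L^{2}$ norm of $\nabla _{x}f^{\varepsilon }$ --- which is not controlled --- not for $\Vert f^{\varepsilon }\Vert $ as you claim. If instead you leave $D_{x}h^{\varepsilon }$ in $L^{\infty }$, its coefficient from the double collision term is $\frac{C_{N}}{\varepsilon ^{2}}\int \int e^{-\nu _{M}(t-s)/\varepsilon }e^{-\nu _{M}(s-s_{1})/\varepsilon }\,ds_{1}ds=O(C_{N})$, an order--one constant that is neither $O(T_{0})$ nor $o_{\varepsilon }(1)$, so it cannot be absorbed into the left-hand side; ``closing self-referentially on $[0,T_{0}]$'' fails here. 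The missing idea is the integration by parts in $y$ (equivalently $v^{\prime }$) carried out in \eqref{x}: one moves $D_{x}$ off $h^{\varepsilon }$ onto the mollified kernel $l_{N}$ and onto the Jacobian $|dv^{\prime }/dy|$, after which the undifferentiated $h^{\varepsilon }$ can be converted into $\Vert f^{\varepsilon }\Vert $ as in the $L^{\infty }$ lemma. This is precisely what forces the estimate $|D_{x}(\det (dv^{\prime }/dy))|\lesssim |\partial _{x}\partial _{v^{\prime }}X|/(\kappa ^{4}\varepsilon ^{4})$ and hence the second-order trajectory bound \eqref{w2bound} of Lemma \ref{elr} (itself requiring $L^{2}$ elliptic regularity for $\nabla _{x}^{3}\phi ^{\varepsilon }$ through the Poisson equation), and it is the true source of the singular factors $\varepsilon ^{-3}\Vert h^{\varepsilon }\Vert _{\infty }$ and $\varepsilon ^{-4}\sup \Vert f^{\varepsilon }\Vert $ that dictate the weight $\varepsilon ^{5}$ --- not the $|s-s^{\prime }|^{-3/2}$ singularity you cite.

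Two smaller omissions in the same direction: for the $D_{v}$ estimate one cannot differentiate $(K_{M,w}h^{\varepsilon })(s,X(s),V(s))$ in $v$ directly; the paper first inverts $\partial V(s)/\partial v$ (nonsingular by \eqref{xvbound}) and then uses the decomposition $D_{v(s)}K_{M,w}=K_{M,w}^{1}+K_{M,w}^{2}\partial _{v}$ from Lemma 2.2 of \cite{g4}, followed by an integration by parts in $v^{\prime \prime }$ for the $K_{M,w}^{2}\partial _{v}h^{\varepsilon }$ part. And the $O(\varepsilon ^{-1})$ commutator $\frac{1}{\varepsilon }(D_{x}l_{M,w})h^{\varepsilon }$ cannot be handled by ``inserting'' the $L^{\infty }$ bound (that would leave a non-small $O(1)$ coefficient on $\Vert (1+|v|)h^{\varepsilon }\Vert _{\infty }$); it must itself be run through the full Duhamel iteration for $\widetilde{h}^{\varepsilon }=(1+|v|)h^{\varepsilon }$ with the weight $w_{1}=(1+|v|)w$, which you gesture at in your third paragraph but do not connect to this term.
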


Once we establish Lemma \ref{Linfty} and \ref{Winfty}, by bootstrapping the
time interval into the given time $T$, we can readily conclude Proposition %
\ref{prop}. We also remark that in light of the estimate in Proposition \ref%
{prop}, the assumption (\ref{boot}) will be automatically satisfied by a
continuity argument.

\begin{proof}\textit{of Proposition \ref{prop}:} If $t\leq T_0$, the
conclusion directly follows from Lemma \ref{Linfty} and Lemma
\ref{Winfty}. Assume that $T_0\leq t\leq T$. Then there exists a
positive integer $n$ so that $t=nT_0+\tau$ where $0\leq \tau\leq
T_0$. Apply \eqref{T_0} in Lemma \ref{Linfty} repeatedly to get for
each $n$,
\[
\begin{split}
\varepsilon^{3/2}\|h(t)\|_\infty&\leq\frac{1}{2}\|\varepsilon
^{3/2}h(\{n-1\}T_0+\tau)\|_{\infty }+C\{\sup_{0\leq s\leq T
}\|f^{\varepsilon }(s)\|+\varepsilon ^{(2k+1)/2}\}\\
&\leq \frac{1}{4}\|\varepsilon ^{3/2}h(\{n-2\}T_0+\tau)\|_{\infty
}+\{\frac{C}{2}+C\}\{\sup_{0\leq s\leq T }\|f^{\varepsilon
}(s)\|+\varepsilon ^{(2k+1)/2}\}\\
&\leq ...\\
&\leq \frac{1}{2^n}\|\varepsilon ^{3/2}h(\tau)\|_{\infty
}+2C\{\sup_{0\leq s\leq T }\|f^{\varepsilon }(s)\|+\varepsilon
^{(2k+1)/2}\}
\end{split}
\]
since $1+\tfrac12+\tfrac14+...+\tfrac{1}{2^n}\leq 2$ for each $n$.
From \eqref{basic}, the estimate \eqref{5.6} follows. Similarly, one
can deduce \eqref{5.7} from the above two lemmas.
\end{proof}

In the following two subsections, we prove the above two lemmas.

\subsection{$L^\infty$ bound : Proof of Lemma \protect\ref{Linfty}}

Since $\frac{d}{ds}h(s,X(s;t,x,v),V(s;t,x,v))=\partial _{t}h+\nabla
_{x}h\cdot \frac{dX}{ds}+\nabla _{v}h\cdot \frac{dV}{ds}$, the solution to
the following transport equation
\begin{equation*}
\partial _{t}h^{\varepsilon }+v\cdot \nabla _{x}h^{\varepsilon }+\nabla
_{x}\phi ^{\varepsilon }\cdot \nabla _{v}h^{\varepsilon }+\frac{\nu (\omega )%
}{\varepsilon }h^{\varepsilon }=0
\end{equation*}%
can be written as $h^{\varepsilon }(t,x,v)=\exp \{-\frac{1}{\varepsilon }%
\int_{0}^{t}\nu (\tau )d\tau \}h^{\varepsilon }(0,X(0;t,x,v),V(0;t,x,v))$.
Thus for any $(t,x,v)$, integrating along the backward trajectory %
\eqref{char}, by the Duhamel's principle, the solution $h^{\varepsilon
}(t,x,v)$ of the original nonlinear equation \eqref{he} can be written as
follows:
\begin{equation}
\begin{split}  \label{duhamel}
& h^{\varepsilon }(t,x,v)=\exp \{-\frac{1}{\varepsilon }\int_{0}^{t}\nu
(\tau )d\tau \}h^{\varepsilon }(0,X(0;t,x,v),V(0;t,x,v)) \\
\ & -\int_{0}^{t}\exp \{-\frac{1}{\varepsilon }\int_{s}^{t}\nu (\tau )d\tau
\}\left( \frac{1}{\varepsilon }K_{M,w}h^{\varepsilon }\right) (s,X(s),V(s))ds
\\
\ & +\int_{0}^{t}\exp \{-\frac{1}{\varepsilon }\int_{s}^{t}\nu (\tau )d\tau
\}\left( \frac{\varepsilon ^{k-1}w}{\sqrt{\omega _{M}}}Q(\frac{%
h^{\varepsilon }\sqrt{\omega _{M}}}{w},\frac{h^{\varepsilon }\sqrt{\omega
_{M}}}{w})\right) (s,X(s),V(s))ds \\
\ & +\int_{0}^{t}\exp \{-\frac{1}{\varepsilon }\int_{s}^{t}\nu (\tau )d\tau
\}\left( \sum_{i=1}^{2k-1}\varepsilon ^{i-1}\frac{w}{\sqrt{\omega _{M}}}%
Q(F_{i},\frac{h^{\varepsilon }\sqrt{\omega _{M}}}{w})\right) (s,X(s),V(s))ds
\\
\ & +\int_{0}^{t}\exp \{-\frac{1}{\varepsilon }\int_{s}^{t}\nu (\tau )d\tau
\}\left( \sum_{i=1}^{2k-1}\varepsilon ^{i-1}\frac{w}{\sqrt{\omega _{M}}}Q(%
\frac{h^{\varepsilon }\sqrt{\omega _{M}}}{w},F_{i})\right) (s,X(s),V(s))ds \\
\ & -\int_{0}^{t}\exp \{-\frac{1}{\varepsilon }\int_{s}^{t}\nu (\tau )d\tau
\}\left( \nabla _{x}\phi ^{\varepsilon }\cdot \frac{w}{\sqrt{\omega _{M}}}%
\nabla _{v}(\frac{\sqrt{\omega _{M}}}{w})h^{\varepsilon }\right)
(s,X(s),V(s))ds \\
\ & -\int_{0}^{t}\exp \{-\frac{1}{\varepsilon }\int_{s}^{t}\nu (\tau )d\tau
\}\left( \nabla _{x}\phi _{R}^{\varepsilon }\cdot \frac{w}{\sqrt{\omega _{M}}%
}\nabla _{v}(\omega +\sum_{i=1}^{2k-1}\varepsilon ^{i}F_{i})\right)
(s,X(s),V(s))ds \\
\ & +\int_{0}^{t}\exp \{-\frac{1}{\varepsilon }\int_{s}^{t}\nu (\tau )d\tau
\}\left( \varepsilon ^{k-1}\frac{w}{\sqrt{\omega _{M}}}A\right)
(s,X(s),V(s))ds.
\end{split}%
\end{equation}%
We will prove only \eqref{T_0}. The estimate of \eqref{basic} can be
obtained in the same way by directly estimating $\Vert h^{\varepsilon }\Vert
_{\infty }$ rather than $e^{\frac{\nu _{0}s}{2\varepsilon }}\Vert
h^{\varepsilon }\Vert _{\infty }$ in \eqref{hinfty} as done in \cite%
{gjj,gjj2}.

Since $|\frac{w}{\sqrt{\omega _{M}}}Q(\frac{h^{\varepsilon }\sqrt{\omega _{M}%
}}{w},\frac{h^{\varepsilon }\sqrt{\omega _{M}}}{w})|\leq
C\nu(\omega)\|h^{\varepsilon }\|_{\infty }^2$ from Lemma 10 in \cite{g5},
and since
\begin{equation*}
\nu (\omega ) \backsim\int |v-u|\omega dv\backsim (1+|v|)\rho_0
(t,x)\backsim \nu _{M}(v)\,,\;\; \nu(\omega)\geq 2\nu_0>0
\end{equation*}
\begin{equation*}
\begin{split}
&\int_{0}^{t}\exp \{-\frac{1 }{\varepsilon}\int_s^t\nu(\omega)(\tau) d\tau\}
\nu (\omega) e^{-\frac{\nu_0 s}{\varepsilon}} ds \leq Ce^{-\frac{\nu_0 t}{%
\varepsilon}} \int_{0}^{t}\exp \{-\frac{\nu _{M}(t-s)}{\varepsilon }\}\nu
_{M}ds\leq C\varepsilon e^{-\frac{\nu_0 t}{\varepsilon}},
\end{split}%
\end{equation*}%
the third line in (\ref{duhamel}) is bounded by
\begin{equation*}
\begin{split}
C\varepsilon^{k-1}\int_{0}^{t}\exp \{-\frac{1 }{\varepsilon}%
\int_s^t\nu(\omega)(\tau) d\tau\} \nu (\omega) \|h^\varepsilon(s)\|_\infty^2
ds \leq C\varepsilon ^{k}e^{-\frac{\nu_0 t}{\varepsilon}}\sup_{0\leq s\leq
t}\{e^{\frac{\nu_0 s}{2\varepsilon}}\|h^{\varepsilon }(s)\|_{\infty}\}^{2}.
\end{split}%
\end{equation*}
From Lemma 10 in \cite{g5} again,
\begin{equation*}
\sum_{i=1}^{2k-1} \varepsilon^{i-1}\frac{w}{\sqrt{\omega_M}}\{ Q(F_i,\frac{%
h^{\varepsilon }\sqrt{\omega _{M}}}{w})+Q(\frac{h^{\varepsilon }\sqrt{\omega
_{M}}}{w},F_i)\} \leq \nu _{M}\|h^{\varepsilon }\|_\infty \|\frac{w}{\sqrt{%
\omega _{M}}}\sum_{i=1}^{2k-1} \varepsilon^{i-1}F_{i}\|_{\infty },
\end{equation*}
so that the fourth and fifth lines in (\ref{duhamel}) are bounded by
\begin{equation*}
\int_{0}^{t}\exp \{-\frac{\nu _{M}(t-s)}{\varepsilon }\}
\nu_M\|h^\varepsilon(s)\|_\infty ds \leq C\varepsilon\mathcal{I}_1 e^{-\frac{%
\nu_0 t}{2\varepsilon}} \sup_{0\leq s\leq t}\{e^{\frac{\nu_0 s}{2\varepsilon}%
}\|h^{\varepsilon }(s)\|_{\infty}\}.  \label{h1}
\end{equation*}
Since $|\frac{w}{\sqrt{\omega_M}}\nabla_v(\frac{\sqrt{\omega_M}}{w})| \leq
C(1+|v|)$ and $|\frac{w}{\sqrt{\omega_M}}\nabla_v(\omega+\sum_{i=1}^{2k-1}%
\varepsilon^iF_i)| \leq C+\varepsilon C\mathcal{I}_1$, the sixth and seventh
lines in (\ref{duhamel}), from \eqref{er}, are bounded by $(C+\varepsilon C%
\mathcal{I}_1) \varepsilon e^{-\frac{\nu_0 t}{2\varepsilon}}\sup_{0\leq
s\leq t}\{e^{\frac{\nu_0 s}{2\varepsilon}}\|h^{\varepsilon }(s)\|_{\infty
}\} $. The last line in (\ref{duhamel}) is clearly bounded by $C\mathcal{I}%
_2\varepsilon^{k}.$

We shall mainly concentrate on the second term on the right hand side of (%
\ref{duhamel}). Let $l_{M}(v,v^{\prime })$ be the corresponding kernel
associated with $K_{M}$ in \cite{C}, we have
\begin{equation}
|l_{M}(v,v^{\prime })|\leq C\{|v-v^{\prime }|+\frac{1}{|v-v^{\prime }|}%
\}\exp \{-c|v-v^{\prime }|^{2}-c\frac{||v|^{2}-|v^{\prime }|^{2}|^{2}}{%
|v-v^{\prime }|^{2}}\}.  \label{k}
\end{equation}%
Since $\nu (\omega )$ $\backsim \nu _{M},$ we bound the second term by%
\begin{equation}
\frac{1}{\varepsilon }\int_{0}^{t}\exp \{-\frac{1}{\varepsilon }%
\int_{s}^{t}\nu (\tau )d\tau \}\int_{\mathbb{R}^{3}}|l_{M,w}(V(s),v^{\prime
})h^{\varepsilon }(s,X(s),v^{\prime })|dv^{\prime }ds,  \label{km}
\end{equation}%
where $l_{M,w}(\tilde{v},v^{\prime })=\frac{w(\tilde{v})}{w(v^{\prime })}%
l_{M}(\tilde{v},v^{\prime })$. We now use (\ref{duhamel}) again to evaluate $%
K_{M,w}h^{\varepsilon }$ in \eqref{km}. Denoting
\begin{equation*}
\lbrack X(s_{1}),V(s_{1})]\equiv \lbrack X(s_{1};s,X(s;t,x,v),v^{\prime
}),V(s_{1};s,X(s;t,x,v),v^{\prime })]\,,
\end{equation*}%
we can further bound (\ref{km}) by
\begin{equation}
\begin{split}\label{double}
& \frac{1}{\varepsilon }\int_{0}^{t}\exp \{-\frac{1}{\varepsilon }%
\int_{s}^{t}\nu (\tau )d\tau -\frac{1}{\varepsilon }\int_{0}^{s}\nu (\tau
)d\tau \}\int_{\mathbb{R}^{3}}|l_{M,w}(V(s),v^{\prime })\,h^{\varepsilon
}(0,X(0),V(0))|dv^{\prime }ds \\
\ & +\frac{1}{\varepsilon ^{2}}\int_{0}^{t}\int_{0}^{s}\exp \{-\frac{1}{%
\varepsilon }\int_{s}^{t}\nu (\tau )d\tau -\frac{1}{\varepsilon }%
\int_{s_{1}}^{s}\nu (\tau )d\tau \}\int_{\mathbb{R}^{3}\times \mathbb{R}%
^{3}}|l_{M,w}(V(s),v^{\prime })\,l_{M,w}(V(s_{1}),v^{\prime \prime }) \\
\ & \;\;\quad \quad \quad \quad \quad \quad \quad \quad \quad \quad \quad
\quad \quad \quad \quad \quad \quad \quad \quad \quad \quad \quad \quad
\;\;\;\cdot h^{\varepsilon }(s_{1},X(s_{1}),v^{\prime \prime })|dv^{\prime
\prime }dv^{\prime }ds_{1}ds \\
& +\frac{1}{\varepsilon }\int_{0}^{t}\int_{0}^{s}\exp \{-\frac{1}{%
\varepsilon }\int_{s}^{t}\nu (\tau )d\tau -\frac{1}{\varepsilon }%
\int_{s_{1}}^{s}\nu (\tau )d\tau \}\int_{\mathbb{R}^{3}}|l_{M,w}(V(s),v^{%
\prime })\, \\
& \quad \quad \quad \quad \quad \quad \;\;\;\quad \quad \quad \quad \quad
\;\;\cdot \left( \frac{\varepsilon ^{k-1}w}{\sqrt{\omega _{M}}}Q(\frac{%
h^{\varepsilon }\sqrt{\omega _{M}}}{w},\frac{h^{\varepsilon }\sqrt{\omega
_{M}}}{w})\right) (s_{1},X(s_{1}),V(s_{1}))dv^{\prime }ds_{1}ds \\
& +\frac{1}{\varepsilon }\int_{0}^{t}\int_{0}^{s}\exp \{-\frac{1}{%
\varepsilon }\int_{s}^{t}\nu (\tau )d\tau -\frac{1}{\varepsilon }%
\int_{s_{1}}^{s}\nu (\tau )d\tau \}\int_{\mathbb{R}^{3}}|l_{M,w}(V(s),v^{%
\prime })\, \\
& \quad \quad \quad \cdot \left( \sum_{i=1}^{2k-1}\varepsilon ^{i-1}\frac{w}{%
\sqrt{\omega _{M}}}\{Q(F_{i},\frac{h^{\varepsilon }\sqrt{\omega _{M}}}{w})+Q(%
\frac{h^{\varepsilon }\sqrt{\omega _{M}}}{w},F_{i})\}\right)
(s_{1},X(s_{1}),V(s_{1}))dv^{\prime }ds_{1}ds \\
& +\frac{1}{\varepsilon }\int_{0}^{t}\int_{0}^{s}\exp \{-\frac{1}{%
\varepsilon }\int_{s}^{t}\nu (\tau )d\tau -\frac{1}{\varepsilon }%
\int_{s_{1}}^{s}\nu (\tau )d\tau \}\int_{\mathbb{R}^{3}}|l_{M,w}(V(s),v^{%
\prime })\, \\
& \;\cdot \left( \nabla _{x}\phi ^{\varepsilon }\cdot \frac{w}{\sqrt{\omega
_{M}}}\nabla _{v}(\frac{\sqrt{\omega _{M}}}{w})h^{\varepsilon }+\nabla
_{x}\phi _{R}^{\varepsilon }\cdot \frac{w}{\sqrt{\omega _{M}}}\nabla
_{v}(\omega +\sum_{i=1}^{2k-1}\varepsilon ^{i}F_{i})\right)
(s_{1},X(s_{1}),V(s_{1}))dv^{\prime }ds_{1}ds \\
& +\frac{1}{\varepsilon }\int_{0}^{t}\int_{0}^{s}\exp \{-\frac{1}{%
\varepsilon }\int_{s}^{t}\nu (\tau )d\tau -\frac{1}{\varepsilon }%
\int_{s_{1}}^{s}\nu (\tau )d\tau \}\int_{\mathbb{R}^{3}}|l_{M,w}(V(s),v^{%
\prime })\, \\
& \quad \quad \quad \quad \quad \quad \;\;\;\quad \quad \quad \quad
\;\;\cdot \left( \varepsilon ^{k-1}\frac{w}{\sqrt{\omega _{M}}}A\right)
(s_{1},X(s_{1}),V(s_{1}))dv^{\prime }ds_{1}ds.
\end{split}%
\end{equation}%
Since $\sup_{\tilde{v}}\int_{\mathbf{R}^{3}}|l_{M,w}(\tilde{v},v^{\prime
})|dv^{\prime }<+\infty $ from Lemma 7 in \cite{g5}, and by the previous
estimates, there is an upper bound except for the second term as
\begin{equation*}
\frac{t}{\varepsilon }e^{-\frac{\nu _{0}t}{\varepsilon }}\Vert
h^{\varepsilon }(0)\Vert _{\infty }+\varepsilon ^{k}e^{-\frac{\nu _{0}t}{%
\varepsilon }}\sup_{0\leq s\leq t}\{e^{\frac{\nu _{0}s}{2\varepsilon }}\Vert
h^{\varepsilon }(s)\Vert _{\infty }\}^{2}+(1+\mathcal{I}_{1})\varepsilon e^{-%
\frac{\nu _{0}t}{2\varepsilon }}\sup_{0\leq s\leq t}\{e^{\frac{\nu _{0}s}{%
2\varepsilon }}\Vert h^{\varepsilon }(s)\Vert _{\infty }\}+\mathcal{I}%
_{2}\varepsilon ^{k}
\end{equation*}%
up to a constant. We now concentrate on the second term in (\ref{double})
and we follow the same spirit of the proof of Theorem 20 in \cite{g5}. From %
\eqref{er} and \eqref{char}, fix $N>0$ large enough so that
\begin{equation*}
\frac{N}{2}\geq \sup_{0\leq t\leq T,\;0\leq s\leq T}|V(s)-v|\,.
\end{equation*}%
Note that by Lemma 7 in \cite{g5} (Grad estimate),
\begin{equation}
\int \int |l_{M,w}(V(s),v^{\prime })\,l_{M,w}(V(s_{1}),v^{\prime \prime
})|dv^{\prime }dv^{\prime \prime }\leq \frac{C}{1+|V(s)|}.  \label{ll}
\end{equation}%
We divide into four cases according to the size of $v,v^{\prime },v^{\prime
\prime }$ and for each case, an upper bound of the second term in %
\eqref{double} will be obtained.\newline

\textbf{CASE 1:} $|v|\geq N$. In this case, since $|V(s)|\geq \frac{N}{2}$, %
\eqref{ll} implies that
\begin{equation*}
\int \int |l_{M,w}(V(s),v^{ \prime })l_{M ,w}(V(s_1),v^{\prime \prime
})|dv^{\prime }dv^{\prime \prime }\leq\frac{C}{N},
\end{equation*}
and thus we have the following bound
\begin{equation*}
\frac{C}{\varepsilon ^{2}N}\int_{0}^{t}\int_{0}^{s}\exp \{-\frac{\nu_M(t-s)}{%
\varepsilon }\}\exp \{-\frac{\nu_M (s-s_{1})}{\varepsilon }%
\}\|h^{\varepsilon }(s_1)\|_{\infty }ds_{1}ds\leq \frac{C}{N}e^{-\frac{\nu_0
t}{2\varepsilon}}\sup_{0\leq s\leq t}\{e^{\frac{\nu_0 s}{2\varepsilon}%
}\|h^{\varepsilon }(s)\|_{\infty}\}.
\end{equation*}

\textbf{CASE 2:} $|v|\leq N,$ $|v^{\prime }|\geq 2N,$ or $|v^{\prime }|\leq
2N$, $|v^{\prime \prime }|\geq 3N.$ Observe that
\begin{equation*}
\begin{split}
&|V(s)-v^{\prime}|\geq |v^{\prime}-v|-|V(s)-v|\geq |v^{\prime}|-|v|-|V(s)-v|
\\
&|V(s_1)-v^{\prime\prime}|\geq
|v^{\prime\prime}-v^{\prime}|-|V(s_1)-v^{\prime}|\geq
|v^{\prime\prime}|-|v^{\prime}|-|V(s_1)-v^{\prime}|
\end{split}%
\end{equation*}
thus we have either $|V(s)-v^{\prime }|\geq \frac{N}{2}$ or $%
|V(s_1)-v^{\prime \prime }|\geq \frac{N}{2},$ and either one of the
following are valid correspondingly for $\eta >0$:
\begin{equation}
\begin{split}
&|l_{M,w}(V(s),v^{ \prime })|\leq e^{-\frac{\eta }{8}N^{2}}|l_{M,w}(V(s),v^{
\prime })e^{\frac{\eta }{8}|V(s)-v^{\prime }|^{2}}|, \\
&|l_{M ,w}(V(s_1),v^{\prime \prime})|\leq e^{-\frac{\eta }{8} N^{2}}|l_{M
,w}(V(s_1),v^{\prime \prime })e^{\frac{\eta }{8}|V(s_1)-v^{\prime \prime
}|^{2}}|.  \label{kwe}
\end{split}%
\end{equation}
From Lemma 7 in \cite{g5}, both $\int |l_{M,w}(V(s),v^{\prime }) e^{\frac{%
\eta }{8} |V(s)-v^{\prime }|^{2}}|dv^{\prime}$ and $\int
|l_{M,w}(V(s_1),v^{\prime \prime })e^{\frac{\eta }{8}|V(s_1)-v^{\prime
\prime}|^{2}}|dv^{\prime\prime}$ are still finite for sufficiently small $%
\eta>0$. We use (\ref{kwe}) to combine the cases of $|V(s)-v^{\prime }|\geq
\frac{N}{2}$ or $|V(s_1)-v^{\prime \prime }|\geq \frac{N}{2}$ to get the
following bound
\begin{equation}
\begin{split}
&\int_{0}^{t}\int_{0}^{s}\left\{ \int_{|v|\leq N,|v^{\prime }|\geq 2N, \text{
\ \ }}+\int_{|v^{\prime}|\leq 2N,|v^{\prime \prime}|\geq 3N}\right\} \\
&\leq \frac{C_{\eta }}{\varepsilon ^{2}}e^{-\frac{\eta }{8}
N^{2}}\int_{0}^{t}\int_{0}^{s}\exp \{-\frac{\nu_M (t-s)}{\varepsilon }
\}\exp \{-\frac{\nu_M(s-s_{1})}{\varepsilon }\}\|h^{\varepsilon
}(s_1)\|_{\infty }ds_1ds \\
&\leq C_{\eta }e^{-\frac{\eta }{8}N^{2}}e^{-\frac{\nu_0 t}{2\varepsilon}%
}\sup_{0\leq s\leq t}\{e^{\frac{\nu_0 s}{2\varepsilon}}\|h^{\varepsilon
}(s)\|_{\infty}\}.  \label{inflowstep3}
\end{split}%
\end{equation}

\textbf{CASE 3a:} $|v|\leq N,$ $|v^{\prime }|\leq 2N,|v^{\prime \prime
}|\leq 3N.$ This is the last remaining case because if $|v^{\prime }|>2N,$
it is included in Case 2; while if $|v^{\prime \prime }|>3N,$ either $%
|v^{\prime }|\leq 2N$ or $|v^{\prime }|\geq 2N$ are also included in Case 2.
We further assume that $s-s_1\leq \varepsilon \kappa ,$ for $\kappa >0$
small. We bound the second term in (\ref{double}) by
\begin{equation}
\begin{split}
&\frac{C_N}{\varepsilon ^{2}}\int_{0}^{t}\int_{s-\kappa\varepsilon }^{s}\exp
\{-\frac{\nu_M(t-s)}{\varepsilon }\}\exp \{-\frac{\nu_M(s-s_{1}) }{%
\varepsilon }\}\|h^{\varepsilon }(s_1)\|_{\infty }ds_1ds \\
&\leq C_Ne^{-\frac{\nu_0 t}{2\varepsilon}}\sup_{0\leq s\leq t}\{e^{\frac{%
\nu_0 t}{2\varepsilon}}\|h^{\varepsilon }(s)\|_{\infty }\}\left( \frac{1}{
\varepsilon }\int_{0}^{t}\exp \{-\frac{\nu_M(t-s)}{2\varepsilon }%
\}ds\right)\left( \int_{s-\varepsilon \kappa }^{s}\frac{1}{\varepsilon }%
ds_1\right) \\
&\leq \kappa C_Ne^{-\frac{\nu_0 t}{2\varepsilon}}\sup_{0\leq s\leq t}\{e^{%
\frac{\nu_0 t}{2\varepsilon}}\|h^{\varepsilon}(s)\|_{\infty}\}.
\label{inflowstep1}
\end{split}%
\end{equation}

\textbf{CASE 3b.} $|v|\leq N,$ $|v^{\prime }|\leq 2N,|v^{\prime \prime
}|\leq 3N,$ and $s-s_{1}\geq \varepsilon \kappa .$ We now can bound the
second term in (\ref{double}) by
\begin{equation*}
\frac{C}{\varepsilon ^{2}}\int_{0}^{t}\int_{B}\int_{0}^{s-\kappa \varepsilon
}e^{-\frac{\nu _{M}(t-s)}{\varepsilon }}e^{-\frac{\nu _{M}(s-s_{1})}{%
\varepsilon }}|l_{M,w}(V(s),v^{\prime })\,l_{M,w}(V(s_{1}),v^{\prime \prime
})\,h^{\varepsilon }(s_{1},X(s_{1}),v^{\prime \prime })|
\end{equation*}%
where $B=\{|v^{\prime }|\leq 2N,$ $|v^{\prime \prime }|\leq 3N\},$ By (\ref%
{k}), $l_{M,w}(v,v^{\prime })$ has possible integrable singularity of $\frac{%
1}{|v-v^{\prime }|},$ we can choose $l_{N}(v,v^{\prime })$ smooth with
compact support such that
\begin{equation}
\sup_{|p|\leq 3N}\int_{|v^{\prime }|\leq 3N}|l_{N}(p,v^{\prime
})-l_{M,w}(p,v^{\prime })|dv^{\prime }\leq \frac{1}{N}.  \label{approximate}
\end{equation}%
Splitting
\begin{equation*}
\begin{split}
l_{M,w}(V(s),v^{\prime })\,l_{M,w}(V(s_{1}),v^{\prime \prime
})=\{l_{M,w}(V(s),v^{\prime })-l_{N}(V(s),v^{\prime
})\}l_{M,w}(V(s_{1}),v^{\prime \prime })&  \\
+\{l_{M,w}(V(s_{1}),v^{\prime \prime })-l_{N}(V(s_{1}),v^{\prime \prime
})\}l_{N}(V(s),v^{\prime })+l_{N}(V(s),v^{\prime })&
\,l_{N}(V(s_{1}),v^{\prime \prime }),
\end{split}%
\end{equation*}%
we can use such an approximation (\ref{approximate}) to bound the above $%
s_{1},s$ integration by
\begin{equation}
\begin{split}
& \frac{C}{N}\sup_{0\leq s\leq t}\Vert h^{\varepsilon }(s)\Vert _{\infty
}\cdot \left\{ \sup_{|v^{\prime }|\leq 2N}\int |l_{M,w}(V(s_{1}),v^{\prime
\prime })|dv^{\prime \prime }+\sup_{|v|\leq N}\int |l_{N}(V(s),v^{\prime
})|dv^{\prime }\right\}  \\
& +\frac{C}{\varepsilon ^{2}}\int_{0}^{t}\int_{0}^{s-\kappa \varepsilon
}\int_{B}e^{-\frac{\nu _{M}(t-s)}{\varepsilon }}e^{-\frac{\nu _{M}(s-s_{1})}{%
\varepsilon }}|l_{N}(V(s),v^{\prime })\,l_{N}(V(s_{1}),v^{\prime \prime
})\,h^{\varepsilon }(s_{1},X(s_{1}),v^{\prime \prime })|dv^{\prime
}dv^{\prime \prime }ds_{1}ds\,.
\end{split}
\label{inflowstep41}
\end{equation}%
Introduce a new variable
\begin{equation}
y=X(s_{1})=X(s_{1};s,X(s;t,x,v),v^{\prime })  \label{vidav}
\end{equation}%
such that
\begin{equation}
|y-X(s)|=|X(s_{1})-X(s)|\leq C(s-s_{1}).  \label{claim2}
\end{equation}%
We now apply Lemma \ref{elr} to $X(s_{1};s,X(s;t,x,v),v^{\prime })$ with $%
x=X(s;t,x,v),\tau =s_{1},t=s$. By (\ref{detlower}), we can choose small but
fixed $T_{0}>0$ such that for $s-s_{1}\geq \kappa \varepsilon ,$
\begin{equation}
|\frac{dy}{dv^{\prime }}|\geq \frac{\kappa ^{3}\varepsilon ^{3}}{2}\,.
\label{claim}
\end{equation}%
Since $l_{N}(V(s),v^{\prime })\,l_{N}(V(s_{1}),v^{\prime \prime })$ is
bounded, we first integrate over $v^{\prime }$ to get
\begin{eqnarray*}
&&\;\;\;C_{N}\int_{|v^{\prime }|\leq 2N}|h^{\varepsilon
}(s_{1},X(s_{1}),v^{\prime \prime })|dv^{\prime } \\
&\leq &C_{N}\left\{ \int_{|v^{\prime }|\leq 2N}\mathbf{1}_{\Omega
}(X(s_{1}))|h^{\varepsilon }(s_{1},X(s_{1}),v^{\prime \prime
})|^{2}dv^{\prime }\right\} ^{1/2} \\
&\leq &\frac{C_{N}}{\kappa ^{3/2}\varepsilon ^{3/2}}\left\{
\int_{|y-X(s)|\leq C(s-s_{1})N}|h^{\varepsilon }(s_{1},y,v^{\prime \prime
})|^{2}dy\right\} ^{1/2} \\
&\leq &\frac{C_{N}}{\kappa ^{3/2}\varepsilon ^{3/2}}\left\{ \int_{\mathbb{R}%
^{3}}|h^{\varepsilon }(s_{1},y,v^{\prime \prime })|^{2}dy\right\} ^{1/2}.
\end{eqnarray*}%
By (\ref{h}) and (\ref{f}), we then further control the last term in (\ref%
{inflowstep41}) by:%
\begin{equation*}
\begin{split}
& \frac{C_{N,\kappa }}{\varepsilon ^{7/2}}\int_{0}^{T_{0}}\int_{0}^{s-\kappa
\varepsilon }e^{-\frac{\nu (v)(T_{0}-s)}{\varepsilon }}e^{-\frac{\nu
(v^{\prime })(s-s_{1})}{\varepsilon }}\int_{|v^{\prime \prime }|\leq
3N}\left\{ \int_{\mathbb{R}^{3}}|h^{\varepsilon }(s,y,v^{\prime \prime
})|^{2}dy\right\} ^{1/2}dv^{\prime \prime }ds_{1}ds \\
\leq & \frac{C_{N,\kappa }T_{0}^{3/2}}{\varepsilon ^{7/2}}%
\int_{0}^{T_{0}}\int_{0}^{s-\kappa \varepsilon }e^{-\frac{\nu (v)(T_{0}-s)}{%
\varepsilon }}e^{-\frac{\nu (v^{\prime })(s-s_{1})}{\varepsilon }}\left\{
\int_{|v^{\prime \prime }|\leq 3N}\int_{\mathbb{R}^{3}}|f^{\varepsilon
}(s,y,v^{\prime \prime })|^{2}dydv^{\prime \prime }\right\} ^{1/2}ds_{1}ds \\
\leq & \frac{C_{N,\kappa ,T_{0}}}{\varepsilon ^{3/2}}\sup_{0\leq s\leq
T}\Vert f^{\varepsilon }(s)\Vert \,.
\end{split}%
\end{equation*}

In summary, we have established, for any $\kappa >0$ and large enough $N>0,$
\begin{equation}
\begin{split}
& \sup_{0\leq s\leq T_{0}}\{e^{\frac{\nu _{0}s}{2\varepsilon }}\Vert
h^{\varepsilon }(s)\Vert _{\infty }\} \\
& \leq C\sup_{0\leq s\leq T_{0}}\{(1+\frac{s}{\varepsilon })e^{-\frac{\nu
_{0}s}{2\varepsilon }}\}\Vert h_{0}\Vert _{\infty }+\{C(1+\mathcal{I}%
_{1}(T_{0}))\varepsilon +C\kappa +\frac{C_{\kappa }}{N}\}\sup_{0\leq s\leq
T_{0}}\{e^{\frac{\nu _{0}s}{2\varepsilon }}\Vert h^{\varepsilon }(s)\Vert
_{\infty }\} \\
& +C\varepsilon ^{k}\sup_{0\leq s\leq T_{0}}\{e^{\frac{\nu _{0}s}{%
2\varepsilon }}\Vert h^{\varepsilon }(s)\Vert _{\infty }\}^{2}+\frac{%
C_{N,\kappa }}{\varepsilon ^{3/2}}e^{\frac{\nu _{0}T_{0}}{2\varepsilon }%
}\sup_{0\leq s\leq T}\Vert f^{\varepsilon }(s)\Vert +C\mathcal{I}%
_{2}(T_{0})e^{\frac{\nu _{0}T_{0}}{2\varepsilon }}\varepsilon ^{k}\,.
\end{split}
\label{hinfty}
\end{equation}%
Note that $(1+\frac{s}{\varepsilon })e^{-\frac{\nu
_{0}s}{2\varepsilon }}$ is uniformly bounded in $s$ and $\varepsilon
$ and $\mathcal{I}_{1}(T_{0})$ and $\mathcal{I}_{2}(T_{0})$ are
uniformly bounded in $\varepsilon $. For sufficiently small
$\varepsilon >0$, first choosing $\kappa $ small, then $N$
sufficiently large so that $\{C(1+\mathcal{I}_{1}(T_{0}))\varepsilon
+C\kappa +\frac{C_{\kappa }}{N}\}<\frac{1}{2}$, we obtain, in light
of assumption (\ref{boot}),
\begin{equation*}
\sup_{0\leq s\leq T_{0}}\{e^{\frac{\nu _{0}s}{2\varepsilon }}\Vert
h^{\varepsilon }(s)\Vert _{\infty }\}\leq C\Vert h_{0}\Vert _{\infty }+\frac{%
C_{N,\kappa }}{\varepsilon ^{3/2}}e^{\frac{\nu _{0}T_{0}}{2\varepsilon }%
}\sup_{0\leq s\leq T}\Vert f^{\varepsilon }(s)\Vert +C\mathcal{I}%
_{2}(T_{0})e^{\frac{\nu _{0}T_{0}}{2\varepsilon }}\varepsilon ^{k}\,.
\end{equation*}%
Letting $s=T_{0}$ in the above and multiplying by $\varepsilon ^{3/2}e^{-%
\frac{\nu _{0}T_{0}}{2\varepsilon }}$, we obtain for sufficiently small $%
\varepsilon $,
\begin{equation*}
\varepsilon ^{3/2}\Vert h^{\varepsilon }(T_{0})\Vert _{\infty }\leq \frac{1}{%
2}\Vert \varepsilon ^{3/2}h_{0}\Vert _{\infty }+C\sup_{0\leq s\leq T}\Vert
f^{\varepsilon }(s)\Vert +C\varepsilon ^{(2k+3)/2}.
\end{equation*}

\subsection{$W^{1,\infty}$ bound : Proof of Lemma \protect\ref{Winfty}}

We will prove only \eqref{5.11}. The estimate \eqref{5.10} can be done in
the same way. Let $D_{x}$ be any $x$ derivative. We now take $D_{x}$ of the
equation \eqref{he} to get
\begin{equation}
\begin{split}
& \partial _{t}(D_{x}h^{\varepsilon })+v\cdot \nabla
_{x}(D_{x}h^{\varepsilon })+\nabla _{x}\phi ^{\varepsilon }\cdot \nabla
_{v}(D_{x}h^{\varepsilon })+\frac{\nu (\omega )}{\varepsilon }%
D_{x}h^{\varepsilon } \\
& =-\nabla _{x}(D_{x}\phi ^{\varepsilon })\cdot \nabla _{v}h^{\varepsilon }-%
\frac{D_{x}\nu (\omega )}{\varepsilon }h^{\varepsilon }-\frac{1}{\varepsilon
}D_{x}(K_{M,w}h^{\varepsilon }) \\
& \quad +\frac{\varepsilon ^{k-1}w}{\sqrt{\omega _{M}}}D_{x}(Q(\frac{%
h^{\varepsilon }\sqrt{\omega _{M}}}{w},\frac{h^{\varepsilon }\sqrt{\omega
_{M}}}{w}))+\sum_{i=1}^{2k-1}\varepsilon ^{i-1}\frac{w}{\sqrt{\omega _{M}}}%
\{D_{x}(Q(F_{i},\frac{h^{\varepsilon }\sqrt{\omega _{M}}}{w})+Q(\frac{%
h^{\varepsilon }\sqrt{\omega _{M}}}{w},F_{i}))\} \\
& \quad -D_{x}(\nabla _{x}\phi ^{\varepsilon }\cdot \frac{w}{\sqrt{\omega
_{M}}}\nabla _{v}(\frac{\sqrt{\omega _{M}}}{w})h^{\varepsilon
})-D_{x}(\nabla _{x}\phi _{R}^{\varepsilon }\cdot \frac{w}{\sqrt{\omega _{M}}%
}\nabla _{v}(\omega +\sum_{i=1}^{2k-1}\varepsilon ^{i}F_{i}))+\varepsilon
^{k-1}\frac{w}{\sqrt{\omega _{M}}}(D_{x}A).
\end{split}
\label{hx}
\end{equation}%
Thus the solution $D_{x}h^{\varepsilon }$ of the equation \eqref{hx} can be
expressed as follows:
\begin{equation}
\begin{split}
& D_{x}h^{\varepsilon }(t,x,v)=\exp \{-\frac{1}{\varepsilon }\int_{0}^{t}\nu
(\tau )d\tau \}D_{x}h^{\varepsilon }(0,X(0;t,x,v),V(0;t,x,v)) \\
\ & -\int_{0}^{t}\exp \{-\frac{1}{\varepsilon }\int_{s}^{t}\nu (\tau )d\tau
\}\left( \nabla _{x}(D_{x}\phi ^{\varepsilon })\cdot \nabla
_{v}h^{\varepsilon }\right) (s,X(s),V(s))ds \\
\ & -\int_{0}^{t}\exp \{-\frac{1}{\varepsilon }\int_{s}^{t}\nu (\tau )d\tau
\}\left( \frac{D_{x}\nu (\omega )}{\varepsilon }h^{\varepsilon }\right)
(s,X(s),V(s))ds \\
\ & -\int_{0}^{t}\exp \{-\frac{1}{\varepsilon }\int_{s}^{t}\nu (\tau )d\tau
\}\left( \frac{1}{\varepsilon }D_{x}(K_{M,w}h^{\varepsilon })\right)
(s,X(s),V(s))ds \\
\ & +\int_{0}^{t}\exp \{-\frac{1}{\varepsilon }\int_{s}^{t}\nu (\tau )d\tau
\}\left( \frac{\varepsilon ^{k-1}w}{\sqrt{\omega _{M}}}D_{x}(Q(\frac{%
h^{\varepsilon }\sqrt{\omega _{M}}}{w},\frac{h^{\varepsilon }\sqrt{\omega
_{M}}}{w}))\right) (s,X(s),V(s))ds \\
\ & +\int_{0}^{t}\exp \{-\frac{1}{\varepsilon }\int_{s}^{t}\nu (\tau )d\tau
\}\left( \sum_{i=1}^{2k-1}\varepsilon ^{i-1}\frac{w}{\sqrt{\omega _{M}}}%
D_{x}(Q(F_{i},\frac{h^{\varepsilon }\sqrt{\omega _{M}}}{w}))\right)
(s,X(s),V(s))ds \\
\ & +\int_{0}^{t}\exp \{-\frac{1}{\varepsilon }\int_{s}^{t}\nu (\tau )d\tau
\}\left( \sum_{i=1}^{2k-1}\varepsilon ^{i-1}\frac{w}{\sqrt{\omega _{M}}}%
D_{x}(Q(\frac{h^{\varepsilon }\sqrt{\omega _{M}}}{w},F_{i}))\right)
(s,X(s),V(s))ds \\
\ & -\int_{0}^{t}\exp \{-\frac{1}{\varepsilon }\int_{s}^{t}\nu (\tau )d\tau
\}\left( D_{x}(\nabla _{x}\phi ^{\varepsilon }\cdot \frac{w}{\sqrt{\omega
_{M}}}\nabla _{v}(\frac{\sqrt{\omega _{M}}}{w})h^{\varepsilon })\right)
(s,X(s),V(s))ds \\
\ & -\int_{0}^{t}\exp \{-\frac{1}{\varepsilon }\int_{s}^{t}\nu (\tau )d\tau
\}\left( D_{x}(\nabla _{x}\phi _{R}^{\varepsilon }\cdot \frac{w}{\sqrt{%
\omega _{M}}}\nabla _{v}(\omega +\sum_{i=1}^{2k-1}\varepsilon
^{i}F_{i}))\right) (s,X(s),V(s))ds \\
\ & +\int_{0}^{t}\exp \{-\frac{1}{\varepsilon }\int_{s}^{t}\nu (\tau )d\tau
\}\left( \varepsilon ^{k-1}\frac{w}{\sqrt{\omega _{M}}}(D_{x}A)\right)
(s,X(s),V(s))ds.
\end{split}
\label{duhamelx}
\end{equation}%
Note that since $\omega $ is a local Maxwellian depending on $t,\;x,\text{
and }v$, the right hand side contains not only $Dh^{\varepsilon }$ terms but
also $h^{\varepsilon }$ terms coming from commutators. In addition, there is
a $\nabla _{v}h^{\varepsilon }$ term coming from forcing, which we will
estimate afterwards. The terms involving $D_{x}h^{\varepsilon }$ can be
estimated similarly as done in $\Vert h^{\varepsilon }\Vert _{\infty }$
estimate. The terms from commutators are lower order, but they carry extra
weight $1+|v|^{2}$; they will be either controlled by $L^{\infty }$ norm of $%
(1+|v|)h^{\varepsilon }$ or absorbed by the stronger exponential decay
factor $\omega $. We will estimate line by line as in the previous section.

It is easy to see that the second line in \eqref{duhamelx} is bounded by
\begin{equation*}
\varepsilon e^{-\frac{\nu _{0}t}{2\varepsilon }}\{C\mathcal{I}%
_{1}\varepsilon +C(1+\varepsilon ^{k})\Vert h^{\varepsilon }\Vert
_{W^{1,\infty }}\}\sup_{0\leq s\leq t}\{e^{\frac{\nu _{0}s}{2\varepsilon }%
}\Vert \nabla _{v}h^{\varepsilon }\Vert _{\infty }\}
\end{equation*}%
where we have used the elliptic regularity \eqref{er}. Since $|D_{x}\nu
(\omega )|\leq C\nu (\omega )$, the third line is bounded by
\begin{equation*}
Ce^{-\frac{\nu _{0}t}{2\varepsilon }}\sup_{0\leq s\leq t}\{e^{\frac{\nu _{0}s%
}{2\varepsilon }}\Vert h^{\varepsilon }\Vert _{\infty }\}
\end{equation*}%
In order to estimate the fifth line, first write the term $D_{x}(Q(\frac{%
h^{\varepsilon }\sqrt{\omega _{M}}}{w},\frac{h^{\varepsilon }\sqrt{\omega
_{M}}}{w}))$ as 
\begin{equation*}
(D_{x}Q)(\frac{h^{\varepsilon }\sqrt{\omega _{M}}}{w},\frac{h^{\varepsilon }%
\sqrt{\omega _{M}}}{w})+Q(\frac{D_{x}h^{\varepsilon }\sqrt{\omega _{M}}}{w},%
\frac{h^{\varepsilon }\sqrt{\omega _{M}}}{w})+Q(\frac{h^{\varepsilon }\sqrt{%
\omega _{M}}}{w},\frac{D_{x}h^{\varepsilon }\sqrt{\omega _{M}}}{w})
\end{equation*}%
%
%
%
%
where $D_{x}Q$ is a commutator which consists of the terms that are given
rise to when the derivative hits other than $\frac{h^{\varepsilon }\sqrt{%
\omega _{M}}}{w}$. Note that $|(D_{x}Q)(\frac{h^{\varepsilon }\sqrt{\omega
_{M}}}{w},\frac{h^{\varepsilon }\sqrt{\omega _{M}}}{w})(v)|\leq
C(1+|v|^{2})|Q(\frac{h^{\varepsilon }\sqrt{\omega _{M}}}{w},\frac{%
h^{\varepsilon }\sqrt{\omega _{M}}}{w})(v)|$. By Lemma 10 in \cite{g5},
\begin{equation*}
|\frac{w}{\sqrt{\omega _{M}}}D_{x}(Q(\frac{h^{\varepsilon }\sqrt{\omega _{M}}%
}{w},\frac{h^{\varepsilon }\sqrt{\omega _{M}}}{w}))|\leq C\nu (\omega
)\{\Vert (1+|v|)h^{\varepsilon }\Vert _{\infty }^{2}+\Vert h^{\varepsilon
}\Vert _{\infty }\Vert D_{x}h^{\varepsilon }\Vert _{\infty }\}
\end{equation*}%
and hence the fifth line in \eqref{duhamelx} is bounded by
\begin{equation*}
C\varepsilon ^{k}e^{-\frac{\nu _{0}t}{\varepsilon }}\sup_{0\leq s\leq
t}\{(e^{\frac{\nu _{0}s}{2\varepsilon }}\Vert (1+|v|)h^{\varepsilon }\Vert
_{\infty })^{2}+(e^{\frac{\nu _{0}s}{2\varepsilon }}\Vert h^{\varepsilon
}\Vert _{\infty })(e^{\frac{\nu _{0}s}{2\varepsilon }}\Vert
D_{x}h^{\varepsilon }\Vert _{\infty })\}
\end{equation*}%
Commutators in the sixth and seventh lines also have the extra weight $%
(1+|v|^{2})$, but this weight can be absorbed into the exponential decay of $%
F_{i}$'s. Thus the sixth and seventh lines in \eqref{duhamelx} are bounded
by
\begin{equation*}
C\mathcal{I}_{1}\varepsilon e^{-\frac{\nu _{0}t}{2\varepsilon }}\sup_{0\leq
s\leq t}\{e^{\frac{\nu _{0}s}{2\varepsilon }}\Vert h^{\varepsilon }\Vert
_{W^{1,\infty }}\}
\end{equation*}%
Similarly, one can deduce that the eighth through tenth lines are bounded by
\begin{equation*}
\varepsilon (C+C\mathcal{I}_{1}\varepsilon )e^{-\frac{\nu _{0}t}{%
2\varepsilon }}\sup_{0\leq s\leq t}\{e^{\frac{\nu _{0}s}{2\varepsilon }%
}\Vert h^{\varepsilon }\Vert _{W^{1,\infty }}\}+C\varepsilon ^{k}
\end{equation*}

We shall concentrate on the fourth line in \eqref{duhamelx}. Write $%
D_{x}(K_{M,w}h^{\varepsilon })$ as
\begin{equation*}
D_{x}(K_{M,w}h^{\varepsilon })(v)=\int (D_{x}l_{M,w})(v,v^{\prime
})h^{\varepsilon }(v^{\prime })dv^{\prime }+\int l_{M,w}(v,v^{\prime
})(D_{x}h^{\varepsilon })(v^{\prime })dv^{\prime }
\end{equation*}%
where $l_{M,w}$ is the corresponding kernel associated with $K_{M,w}$. Note
that
\begin{equation*}
|(D_{x}l_{M,w})(v,v^{\prime })|\leq C(1+|v|)(1+|v-v^{\prime
}|)\,l_{M,w}(v,v^{\prime })(1+|v^{\prime }|)\leq C\nu _{M}(1+|v-v^{\prime
}|)\,l_{M,w}(v,v^{\prime })(1+|v^{\prime }|)
\end{equation*}%
due to the dependence of $l_{M,w}$ on the local Maxwellian $\omega $. Thus
we can bound the fourth line in \eqref{duhamelx} by
\begin{equation*}
\begin{split}
& \frac{1}{\varepsilon }\int_{0}^{t}\exp \{-\frac{1}{\varepsilon }%
\int_{s}^{t}\nu (\tau )d\tau \}\nu _{M}\int_{\mathbb{R}^{3}}|(1+|V(s)-v^{%
\prime }|)l_{M,w}(V(s),v^{\prime })(1+|v^{\prime }|)h^{\varepsilon
}(s,X(s),v^{\prime })|dv^{\prime }ds \\
& +\frac{1}{\varepsilon }\int_{0}^{t}\exp \{-\frac{1}{\varepsilon }%
\int_{s}^{t}\nu (\tau )d\tau \}\int_{\mathbb{R}^{3}}l_{M,w}(V(s),v^{\prime
})(D_{x}h^{\varepsilon })(s,X(s),v^{\prime })dv^{\prime }ds\equiv (I)+(II)
\end{split}%
\end{equation*}%
Letting $\widetilde{h}^{\varepsilon }\equiv (1+|v|)h^{\varepsilon }$, $%
\widetilde{h}^{\varepsilon }$ satisfies the equation \eqref{he} with a
different weight $w_{1}\equiv (1+|v|)w(v)$. We now use the Duhamel equation %
\eqref{duhamel} for $\widetilde{h}^{\varepsilon }$ with the weight $w_{1}$
to evaluate $(I)$. Recall \eqref{double}. Note that the $L^{\infty }$
estimates of $h^{\varepsilon }$ do not depend on the strength of the weight,
and also both $l_{M,w}$ and $l_{M,w_{1}}$ inherit Grad estimates \eqref{k}.
Thus we can follow the previous estimates to obtain the bound for $(I)$
\begin{equation*}
\begin{split}
(I)& \leq C\frac{t}{\varepsilon }e^{-\frac{\nu _{0}t}{\varepsilon }}\Vert
\widetilde{h}^{\varepsilon }(0)\Vert _{\infty }+C\varepsilon ^{k}e^{-\frac{%
\nu _{0}t}{\varepsilon }}\sup_{0\leq s\leq t}\{e^{\frac{\nu _{0}s}{%
2\varepsilon }}\Vert \widetilde{h}^{\varepsilon }(s)\Vert _{\infty }\}^{2} \\
& +(C(1+\mathcal{I}_{1})\varepsilon +C\kappa +\frac{C_{\kappa }}{N})e^{-%
\frac{\nu _{0}t}{2\varepsilon }}\sup_{0\leq s\leq t}\{e^{\frac{\nu _{0}s}{%
2\varepsilon }}\Vert \widetilde{h}^{\varepsilon }(s)\Vert _{\infty }\}+\frac{%
C_{N,\kappa }}{\varepsilon ^{3/2}}\sup_{0\leq s\leq T_{0}}\Vert
f^{\varepsilon }(s)\Vert +C\mathcal{I}_{2}\varepsilon ^{k}
\end{split}%
\end{equation*}%
And we use the equation \eqref{duhamelx} to evaluate $(II)$. The estimation
is again exactly same as $L^{\infty }$ bound except the very last part where
$\Vert f^{\varepsilon }\Vert $ comes up. Here we will only present this last
case: recall the \textbf{CASE 3b.} in the previous section and see %
\eqref{inflowstep41}
\begin{equation*}
\begin{split}
& \frac{C}{\varepsilon ^{2}}\int_{0}^{t}\int_{0}^{s-\kappa \varepsilon
}\int_{B}\exp \{-\frac{1}{\varepsilon }\int_{s}^{t}\nu (\tau )d\tau -\frac{1%
}{\varepsilon }\int_{s_{1}}^{s}\nu (\tau )d\tau \}l_{N}(V(s),v^{\prime
})\,l_{N}(V(s_{1}),v^{\prime \prime }) \\
& D_{x}h^{\varepsilon }(s_{1},X(s_{1}),v^{\prime \prime })dv^{\prime
}dv^{\prime \prime }ds_{1}ds\,.
\end{split}%
\end{equation*}%
As before in (\ref{vidav}), we introduce a new variable $%
y=X(s_{1})=X(s_{1};s,X(s;t,x,v),v^{\prime })$ such that $%
|y-X(s)|=|X(s_{1})-X(s)|\leq C(s-s_{1}).$ We now apply Lemma \ref{elr} to $%
X(s_{1};s,X(s;t,x,v),v^{\prime })$ with $x=X(s;t,x,v),\tau =s_{1},t=s$. Make
a change of variables from $v^{\prime }$ to $y$ and integrate by parts:
\begin{equation}
\begin{split}
& \frac{1}{\varepsilon ^{2}}\int_{0}^{t}\int_{0}^{s-\kappa \varepsilon
}\int_{B}\exp \{-\frac{1}{\varepsilon }\int_{s}^{t}\nu (\tau )d\tau -\frac{1%
}{\varepsilon }\int_{s_{1}}^{s}\nu (\tau )d\tau \}l_{N}(V(s),v^{\prime
})\,l_{N}(V(s_{1}),v^{\prime \prime })\, \\
& \quad \quad \quad \quad \quad \quad \quad \quad \quad \quad \quad \quad
\quad \quad \quad \quad \quad \quad \quad \quad \quad \quad \quad \quad
D_{x}h^{\varepsilon }(s_{1},y,v^{\prime \prime })|\frac{dv^{\prime }}{dy}%
|dydv^{\prime \prime }ds_{1}ds \\
& \leq -\frac{1}{\varepsilon ^{2}}\int_{0}^{t}\int_{0}^{s-\kappa \varepsilon
}\int_{\hat{B}}\exp \{-\frac{1}{\varepsilon }\int_{s}^{t}\nu (\tau )d\tau -%
\frac{1}{\varepsilon }\int_{s_{1}}^{s}\nu (\tau )d\tau
\}D_{x}(l_{N}(V(s),v^{\prime })\,l_{N}(V(s_{1}),v^{\prime \prime }))\, \\
& \quad \quad \quad \quad \quad \quad \quad \quad \quad \quad \quad \quad
\quad \quad \quad \quad \quad \quad \quad \quad \quad \quad \quad \quad
h^{\varepsilon }(s_{1},y,v^{\prime \prime })|\frac{dv^{\prime }}{dy}%
|dydv^{\prime \prime }ds_{1}ds \\
& -\frac{1}{\varepsilon ^{2}}\int_{0}^{t}\int_{0}^{s-\kappa \varepsilon
}\int_{{\hat{B}}}\exp \{-\frac{1}{\varepsilon }\int_{s}^{t}\nu (\tau )d\tau -%
\frac{1}{\varepsilon }\int_{s_{1}}^{s}\nu (\tau )d\tau
\}l_{N}(V(s),v^{\prime })\,l_{N}(V(s_{1}),v^{\prime \prime })\, \\
& \quad \quad \quad \quad \quad \quad \quad \quad \quad \quad \quad \quad
\quad \quad \quad \quad \quad \quad \quad \quad \quad \quad \quad \quad
h^{\varepsilon }(s_{1},y,v^{\prime \prime })D_{x}(|\frac{dv^{\prime }}{dy}%
|)dydv^{\prime \prime }ds_{1}ds \\
& +\frac{C_{N,\kappa }}{\varepsilon ^{3}}\Vert h^{\varepsilon }\Vert
_{\infty }\;(\text{boundary contribution}),
\end{split}
\label{x}
\end{equation}%
where $\hat{B}=\{|y-X(s)|\leq C(s-s_{1})N,\;|v^{\prime \prime }|\leq 3N\}$.
For the first term in the right hand side, since $D_{x}(l_{N}(V(s),v^{\prime
})\,l_{N}(V(s_{1}),v^{\prime \prime }))$ is bounded, and by \eqref{claim},
following the same argument in $L^{\infty }$ bound, one can deduce that it
is bounded by
\begin{equation*}
\frac{C_{N,\kappa }}{\varepsilon ^{3}}\sup_{0\leq s\leq T}\Vert
f^{\varepsilon }(s)\Vert \,.
\end{equation*}%
For the second term, we need to estimate $D_{x}(\left\vert \frac{dv^{\prime }%
}{dy}\right\vert )$. First note that
\begin{equation*}
D_{x}(\det \left( \frac{dv^{\prime }}{dy}\right) )=D_{x}(\frac{1}{\det
\left( \frac{dy}{dv^{\prime }}\right) })=-\frac{1}{\{\det \left( \frac{dy}{%
dv^{\prime }}\right) \}^{2}}D_{x}(\det \left( \frac{dy}{dv^{\prime }}\right)
),
\end{equation*}%
where
\begin{equation*}
\frac{1}{4(s_{1}-s)^{6}}\leq \frac{1}{\{\text{det}\left( \frac{dy}{%
dv^{\prime }}\right) \}^{2}}\leq \frac{4}{(s_{1}-s)^{6}}\text{ by %
\eqref{detlower} in Lemma \ref{elr}.}
\end{equation*}%
Since
\begin{equation*}
\left( \frac{dy}{dv^{\prime }}\right) =\left(
\begin{array}{ccc}
\partial _{v_{1}^{\prime }}X^{1}(s_{1}) & \partial _{v_{2}^{\prime
}}X^{1}(s_{1}) & \partial _{v_{3}^{\prime }}X^{1}(s_{1}) \\
\partial _{v_{1}^{\prime }}X^{2}(s_{1}) & \partial _{v_{2}^{\prime
}}X^{2}(s_{1}) & \partial _{v_{3}^{\prime }}X^{2}(s_{1}) \\
\partial _{v_{1}^{\prime }}X^{3}(s_{1}) & \partial _{v_{2}^{\prime
}}X^{3}(s_{1}) & \partial _{v_{3}^{\prime }}X^{3}(s_{1})
\end{array}%
\right) ,
\end{equation*}%
by the product rule and \eqref{detbound} in Lemma \ref{elr}, we get
\begin{equation*}
|D_{x}(\text{det}\left( \frac{dy}{dv^{\prime }}\right) )|\leq C|\partial
_{v^{\prime }}X(s_{1})|^{2}|\partial _{x}\partial _{v^{\prime
}}X(s_{1})|\leq C(s_{1}-s)^{2}|\partial _{x}\partial _{v^{\prime
}}X(s_{1})|\,.
\end{equation*}%
Thus,
\begin{equation*}
|D_{x}(\det \left( \frac{dv^{\prime }}{dy}\right) )|=\frac{1}{\{\text{det}%
\left( \frac{dy}{dv^{\prime }}\right) \}^{2}}|D_{x}[\text{det}\left( \frac{dy%
}{dv^{\prime }}\right) ]|\leq \frac{C|\partial _{x}\partial _{v^{\prime
}}X(s_{1})|}{(s_{1}-s)^{4}}\leq \frac{C|\partial _{x}\partial _{v^{\prime
}}X(s_{1})|}{\kappa ^{4}\varepsilon ^{4}}\,.
\end{equation*}%
Therefore, we obtain
\begin{eqnarray*}
\Vert D_{x}(|\frac{dy}{dv^{\prime }}|)\Vert _{L^{2}(\hat{B})}^{2} &\leq &%
\frac{C_\kappa}{\varepsilon ^{4}}\Vert \partial _{x}\partial _{v^{\prime
}}X(s_{1})||_{L^{2}(\hat{B})}^{2} \\
&=&\frac{C_\kappa}{\varepsilon ^{4}}\int_{\hat{B}}|\partial _{x}\partial
_{v^{\prime }}X(s_{1};s,X(s,x,v),v^{\prime
})|^{2}d\{X(s_{1};s,X(s,x,v),v^{\prime })\}dv^{\prime \prime }dv^{\prime
}ds_{1}ds \\
&=&\frac{C_\kappa}{\varepsilon ^{4}}\int_{\hat{B}}|\partial _{x}\partial
_{v^{\prime }}X(s_{1};s,z,v^{\prime })|^{2}\det \left\{ \frac{\partial
X(s_{1};s,X(s,x,v),v^{\prime })\}}{\partial X(s,x,v)}\right\} dzdv^{\prime
\prime }dv^{\prime }ds_{1}ds \\
&\leq &\frac{C_{\kappa,T_{0},N}}{\varepsilon ^{4}}\int_{|z-x|\leq T_{0}N}|\partial
_{x}\partial _{v^{\prime }}X(s_{1};s,z,v^{\prime })|^{2}dz \\
&\leq &\frac{C_{\kappa,T_{0},N}}{\varepsilon ^{4}}
\end{eqnarray*}%
where we have used (\ref{xvbound}) and (\ref{w2bound}). Hence, by
Cauchy-Schwarz's inequality, the second integral in (\ref{x}) is bounded by
\begin{equation*}
\frac{C_{N,\kappa ,T_{0}}}{\varepsilon ^{4}}\sup_{0\leq s\leq T}\Vert
f^{\varepsilon }(s)\Vert \,.
\end{equation*}%
In summary, for any $x$ derivative $D_{x}$, we have shown that for any $%
\kappa >0$ and large enough $N>0$,
\begin{equation}
\begin{split}
\sup_{0\leq s\leq T_{0}}\{e^{\frac{\nu _{0}s}{2\varepsilon }}\Vert
D_{x}h^{\varepsilon }(s)\Vert _{\infty }\}& \leq C\{\Vert
(1+|v|)h^{\varepsilon }(0)\Vert _{\infty }+\Vert D_{x}h(0)\Vert _{\infty }\}
\\
& +C\varepsilon \sup_{0\leq s\leq T_{0}}\{e^{\frac{\nu _{0}s}{2\varepsilon }%
}\Vert \nabla _{v}h^{\varepsilon }(s)\Vert _{\infty }\}+\frac{C}{\varepsilon
^{3}}\sup_{0\leq s\leq T_{0}}\{e^{\frac{\nu _{0}s}{2\varepsilon }}\Vert
h^{\varepsilon }(s)\Vert _{\infty }\} \\
& +\{C\varepsilon +C\kappa +\frac{C_{\kappa }}{N}\}\sup_{0\leq s\leq
T_{0}}\{e^{\frac{\nu _{0}s}{2\varepsilon }}\Vert D_{x}h^{\varepsilon
}(s)\Vert _{\infty }+e^{\frac{\nu _{0}s}{2\varepsilon }}\Vert
(1+|v|)h^{\varepsilon }(s)\Vert _{\infty }\} \\
& +C\varepsilon ^{k}\sup_{0\leq s\leq T_{0}}\{(e^{\frac{\nu _{0}s}{%
2\varepsilon }}\Vert D_{x}h^{\varepsilon }(s)\Vert _{\infty })^{2}+(e^{\frac{%
\nu _{0}s}{2\varepsilon }}\Vert (1+|v|)h^{\varepsilon }(s)\Vert _{\infty
})^{2}\} \\
& +\frac{C_{N,\kappa }}{\varepsilon ^{4}}e^{\frac{\nu _{0}T_{0}}{%
2\varepsilon }}\sup_{0\leq s\leq T}\Vert f^{\varepsilon }(s)\Vert +Ce^{\frac{%
\nu _{0}T_{0}}{2\varepsilon }}\varepsilon ^{k-1}.
\end{split}
\label{Dh}
\end{equation}

As a final step, it now remains to estimate $\Vert \nabla _{v}h^{\varepsilon
}\Vert _{\infty }$ that appears in \eqref{Dh} -- the estimate of $\Vert
D_{x}h^{\varepsilon }\Vert _{\infty }$. Let $D_{v}$ be any $v$ derivative.
Take $D_{v}$ of the equation \eqref{he} to get
\begin{equation}  \label{hv}
\begin{split}
& \partial _{t}(D_{v}h^{\varepsilon })+v\cdot \nabla
_{x}(D_{v}h^{\varepsilon })+\nabla _{x}\phi ^{\varepsilon }\cdot \nabla
_{v}(D_{v}h^{\varepsilon })+\frac{\nu (\omega )}{\varepsilon }%
D_{v}h^{\varepsilon } \\
& =-D_{x}h^{\varepsilon }-\frac{D_{v}\nu (\omega )}{\varepsilon }%
h^{\varepsilon }-\frac{1}{\varepsilon }D_{v}(K_{M,w}h^{\varepsilon }) \\
& \quad +D_{v}(\frac{\varepsilon ^{k-1}w}{\sqrt{\omega _{M}}}Q(\frac{%
h^{\varepsilon }\sqrt{\omega _{M}}}{w},\frac{h^{\varepsilon }\sqrt{\omega
_{M}}}{w}))+\sum_{i=1}^{2k-1}\varepsilon ^{i-1}D_{v}(\frac{w}{\sqrt{\omega
_{M}}}\{Q(F_{i},\frac{h^{\varepsilon }\sqrt{\omega _{M}}}{w})+Q(\frac{%
h^{\varepsilon }\sqrt{\omega _{M}}}{w},F_{i})\}) \\
& \quad -\nabla _{x}\phi ^{\varepsilon }\cdot D_{v}(\frac{w}{\sqrt{\omega
_{M}}}\nabla _{v}(\frac{\sqrt{\omega _{M}}}{w})h^{\varepsilon })-\nabla
_{x}\phi _{R}^{\varepsilon }\cdot D_{v}(\frac{w}{\sqrt{\omega _{M}}}\nabla
_{v}(\omega +\sum_{i=1}^{2k-1}\varepsilon ^{i}F_{i}))+\varepsilon
^{k-1}D_{v}(\frac{w}{\sqrt{\omega _{M}}}A)
\end{split}%
\end{equation}%
where $D_{x}$ is a spatial derivative obtained from $D_{v}(v)\cdot \nabla
_{x}$. By Duhamel principle, the solution $D_{v}h^{\varepsilon }$ of the
equation \eqref{hv} can be expressed as follows:
\begin{equation}  \label{duhamelv}
\begin{split}
& D_{v}h^{\varepsilon }(t,x,v)=\exp \{-\frac{1}{\varepsilon }\int_{0}^{t}\nu
(\tau )d\tau \}D_{v}h^{\varepsilon }(0,X(0;t,x,v),V(0;t,x,v)) \\
\ & -\int_{0}^{t}\exp \{-\frac{1}{\varepsilon }\int_{s}^{t}\nu (\tau )d\tau
\}(D_{x}h^{\varepsilon })(s,X(s),V(s))ds \\
\ & -\int_{0}^{t}\exp \{-\frac{1}{\varepsilon }\int_{s}^{t}\nu (\tau )d\tau
\}\left( \frac{D_{v}\nu (\omega )}{\varepsilon }h^{\varepsilon }\right)
(s,X(s),V(s))ds \\
\ & -\int_{0}^{t}\exp \{-\frac{1}{\varepsilon }\int_{s}^{t}\nu (\tau )d\tau
\}\left( \frac{1}{\varepsilon }D_{v}(K_{M,w}h^{\varepsilon })\right)
(s,X(s),V(s))ds \\
\ & +\int_{0}^{t}\exp \{-\frac{1}{\varepsilon }\int_{s}^{t}\nu (\tau )d\tau
\}D_{v}\left( \frac{\varepsilon ^{k-1}w}{\sqrt{\omega _{M}}}Q(\frac{%
h^{\varepsilon }\sqrt{\omega _{M}}}{w},\frac{h^{\varepsilon }\sqrt{\omega
_{M}}}{w})\right) (s,X(s),V(s))ds \\
\ & +\int_{0}^{t}\exp \{-\frac{1}{\varepsilon }\int_{s}^{t}\nu (\tau )d\tau
\}D_{v}\left( \sum_{i=1}^{2k-1}\varepsilon ^{i-1}\frac{w}{\sqrt{\omega _{M}}}%
Q(F_{i},\frac{h^{\varepsilon }\sqrt{\omega _{M}}}{w})\right) (s,X(s),V(s))ds
\\
\ & +\int_{0}^{t}\exp \{-\frac{1}{\varepsilon }\int_{s}^{t}\nu (\tau )d\tau
\}D_{v}\left( \sum_{i=1}^{2k-1}\varepsilon ^{i-1}\frac{w}{\sqrt{\omega _{M}}}%
Q(\frac{h^{\varepsilon }\sqrt{\omega _{M}}}{w},F_{i})\right) (s,X(s),V(s))ds
\\
\ & -\int_{0}^{t}\exp \{-\frac{1}{\varepsilon }\int_{s}^{t}\nu (\tau )d\tau
\}\left( \nabla _{x}\phi ^{\varepsilon }\cdot D_{v}(\frac{w}{\sqrt{\omega
_{M}}}\nabla _{v}(\frac{\sqrt{\omega _{M}}}{w})h^{\varepsilon })\right)
(s,X(s),V(s))ds \\
\ & -\int_{0}^{t}\exp \{-\frac{1}{\varepsilon }\int_{s}^{t}\nu (\tau )d\tau
\}\left( \nabla _{x}\phi _{R}^{\varepsilon }\cdot D_{v}(\frac{w}{\sqrt{%
\omega _{M}}}\nabla _{v}(\omega +\sum_{i=1}^{2k-1}\varepsilon
^{i}F_{i}))\right) (s,X(s),V(s))ds \\
\ & +\int_{0}^{t}\exp \{-\frac{1}{\varepsilon }\int_{s}^{t}\nu (\tau )d\tau
\}\left( \varepsilon ^{k-1}D_{v}(\frac{w}{\sqrt{\omega _{M}}}A)\right)
(s,X(s),V(s))ds.
\end{split}%
\end{equation}%
As in the spatial derivative ($D_xh^{\varepsilon }$) case, the right hand
side contains not only $D_{v}h^{\varepsilon }$ terms but also $%
h^{\varepsilon }$ terms coming from commutators. But this time the terms
from commutators carry the weight $1+|v|$ at most since they are from $v$
derivatives. The estimates will be almost same as in the spatial derivative
case, so we won't present every detail. We rather give some brief
explanations. For instance, since $|D_{v}\nu (\omega )|\leq C$, the third
term in the right hand side of \eqref{duhamelv} is bounded by $Ce^{-\frac{%
\nu _{0}t}{2\varepsilon }}\sup_{0\leq s\leq t}\{e^{\frac{\nu _{0}s}{%
2\varepsilon }}\Vert h^{\varepsilon }\Vert _{\infty }\}$, and since $|D_{v}(%
\frac{w}{\sqrt{\omega _{M}}}Q(\frac{h^{\varepsilon }\sqrt{\omega _{M}}}{w},%
\frac{h^{\varepsilon }\sqrt{\omega _{M}}}{w}))|\leq C\nu (\omega )\Vert
h^{\varepsilon }\Vert _{\infty }\{\Vert (1+|v|)h^{\varepsilon }\Vert
_{\infty }+\Vert D_{v}h^{\varepsilon }\Vert _{\infty }\}$, the fifth line is
bounded by
\begin{equation*}
C\varepsilon ^{k}e^{-\frac{\nu _{0}t}{\varepsilon }}\sup_{0\leq s\leq
t}\{(e^{\frac{\nu _{0}s}{2\varepsilon }}\Vert h^{\varepsilon }\Vert _{\infty
})(e^{\frac{\nu _{0}s}{2\varepsilon }}\Vert (1+|v|)h^{\varepsilon }\Vert
_{\infty }+e^{\frac{\nu _{0}s}{2\varepsilon }}\Vert D_{v}h^{\varepsilon
}\Vert _{\infty })\}
\end{equation*}%
Other terms except the fourth line can be estimated in the same way as
before.

For the intriguing term in the fourth line, we need to control $\left( \frac{%
1}{\varepsilon }D_{v}(K_{M,w}h^{\varepsilon })\right) (s,X(s),V(s)).$ First
note for $T_{0}$ sufficiently small, by (\ref{xvbound}), $\frac{\partial V(s)%
}{\partial v}$ is non-singular. We therefore can write
\begin{eqnarray*}
&&\left( D_{v}(K_{M,w}h^{\varepsilon })\right) (s,X(s),V(s)) \\
&=&\left[ \frac{\partial V(s)}{\partial v}\right] ^{-1}D_{v(s)}(K_{M,w}h^{%
\varepsilon })(s,X(s),V(s)).
\end{eqnarray*}%
But for $D_{v(s)}(K_{M,w}h^{\varepsilon })(s,X(s),V(s)),$ we can employ
Lemma 2.2 in \cite{g4} so that%
\begin{eqnarray*}
&&D_{v(s)}(K_{M,w}h^{\varepsilon })(s,X(s),V(s)) \\
&=&(K_{M,w}^{1}h^{\varepsilon })(s,X(s),V(s))+(K_{M,w}^{2}\partial
_{v}h^{\varepsilon })(s,X(s),V(s))
\end{eqnarray*}%
in which the kernels in both $K_{M,w}^{1}$ and $K_{M,w}^{2}$ satisfy the
Grad estimate (\ref{k}). We then can repeat the same procedure to $%
K_{M,w}^{1}$ and $K_{M,w}^{2}$. We use integration by parts in $v^{\prime
\prime }$ so that we do not need to take derivatives for the determinant of $%
(\frac{dv^{\prime }}{dy})$ which is independent of $v^{\prime \prime }$ for $%
(K_{M,w}^{2}\partial _{v}h^{\varepsilon })(s,X(s),V(s)))$. Therefore, we
have established the following $W^{1,\infty}$ estimates:
\begin{equation}
\begin{split}
\sup_{0\leq s\leq T_{0}}\{e^{\frac{\nu _{0}s}{2\varepsilon }}\Vert \nabla
_{x,v}h^{\varepsilon }(s)\Vert _{\infty }\}& \leq C\{\Vert
(1+|v|)h^{\varepsilon }(0)\Vert _{\infty }+\Vert \nabla _{x,v}h^{\varepsilon
}(0)\Vert _{\infty }\} \\
& +\{C\varepsilon +C\kappa +\frac{C_{\kappa }}{N}\}\sup_{0\leq s\leq
T_{0}}\{e^{\frac{\nu _{0}s}{2\varepsilon }}\Vert \nabla _{x,v}h^{\varepsilon
}(s)\Vert _{\infty }+e^{\frac{\nu _{0}s}{2\varepsilon }}\Vert
(1+|v|)h^{\varepsilon }(s)\Vert _{\infty }\} \\
& +C\varepsilon ^{k}\sup_{0\leq s\leq T_{0}}\{(e^{\frac{\nu _{0}s}{%
2\varepsilon }}\Vert \nabla _{x,v}h^{\varepsilon }(s)\Vert _{\infty
})^{2}+(e^{\frac{\nu _{0}s}{2\varepsilon }}\Vert (1+|v|)h^{\varepsilon
}(s)\Vert _{\infty })^{2}\} \\
& +\frac{C}{\varepsilon ^{3}}\sup_{0\leq s\leq T_{0}}\{e^{\frac{\nu _{0}s}{%
2\varepsilon }}\Vert h^{\varepsilon }(s)\Vert _{\infty }\}+\frac{C_{N,\kappa
,T_{0}}}{\varepsilon ^{4}}e^{\frac{\nu _{0}T_{0}}{2\varepsilon }}\sup_{0\leq
s\leq T}\Vert f^{\varepsilon }(s)\Vert +Ce^{\frac{\nu _{0}T_{0}}{%
2\varepsilon }}\varepsilon ^{k-1}
\end{split}
\label{Dhinfty}
\end{equation}%
From Lemma \ref{Linfty}, we have the estimates of $\sup_{0\leq s\leq
T_{0}}\varepsilon ^{3/2}\Vert h^{\varepsilon }(s)\Vert _{\infty }$. Due to
the singular term, the first term in the fourth line in \eqref{Dhinfty}, we
first multiply both sides by $\varepsilon ^{5}$ and combine this estimate
with $L^{\infty }$ bound of $\tilde{h}^{\varepsilon }\equiv
(1+|v|)h^{\varepsilon }$ in Lemma \ref{Linfty}, and choose $\kappa $ small, $%
N$ large to deduce that for sufficiently small $\varepsilon $,
\begin{equation*}
\begin{split}
\varepsilon ^{5}\Vert \nabla _{x,v}h^{\varepsilon }(T_{0})\Vert _{\infty }&
\leq \frac{1}{2}\{\varepsilon ^{5}\Vert (1+|v|)h^{\varepsilon }(0)\Vert
_{\infty }+\varepsilon ^{5}\Vert \nabla _{x,v}h(0)\Vert _{\infty
}+\varepsilon ^{3/2}\Vert h(0)\Vert _{\infty }\} \\
& +C\{\varepsilon ^{1/2}\sup_{0\leq s\leq T}\Vert f^{\varepsilon }(s)\Vert
+\varepsilon ^{k+1}\}\,.
\end{split}%
\end{equation*}

\section{Proof of Theorem \protect\ref{theorem2}}

\begin{proof}\textit{of Thoerem \ref{theorem2}:} Combining Proposition \ref{L2} and
Proposition \ref{prop}, we deduce
\begin{equation}\label{5.1}
 \begin{split}
\frac{d}{dt}&\{\|f^\varepsilon\|^2+
\|\nabla\phi_R^\varepsilon\|^2\} +
\frac{c_0}{\varepsilon}\|\{\mathbf{I-P}\}f^\varepsilon
\|^2_\nu  \\
&\leq C\sqrt{\varepsilon}\left[\|\varepsilon^{\frac32}h_0\|_\infty +
\sup_{0\leq s\leq T}\|f^\varepsilon\| +\varepsilon^{\frac{2k+1}{2}}
\right] \left[\|f^\varepsilon\|+\varepsilon^{k-3}
\|f^\varepsilon\|^2 + \varepsilon^{k-2}
\|f^\varepsilon\|\|\nabla\phi_R^\varepsilon\|\right]\\
&\;+C(\frac{1}{(1+t)^p}+\mathcal{I}_1\varepsilon)\{\|f^\varepsilon\|^2+ \|\nabla\phi_R^\varepsilon\|^2\}+C\mathcal{I}_2\varepsilon^{k-1}\|f^\varepsilon\|.
 \end{split}
\end{equation}
Gronwall inequality yields
\begin{equation*}
\begin{split}
 \|f^\varepsilon(t)\|+
\|\nabla\phi_R^\varepsilon(t)\|+1&\leq (\|f^\varepsilon(0)\|+
\|\nabla\phi_R^\varepsilon(0)\|+1)\\
&\exp\{\int_0^t C\{\sqrt{\varepsilon}(\|\varepsilon^{\frac32}h_0\|_\infty+
\sup_{0\leq s\leq T}\|f^\varepsilon\|)+(1+s)^{-p}+\mathcal{I}_1\varepsilon +
\mathcal{I}_2\varepsilon
^{k-1} \}ds\}\\
&\leq (\|f^\varepsilon(0)\|+
\|\nabla\phi_R^\varepsilon(0)\|+1)\\
&\exp\{C+Ct\sqrt{\varepsilon}(\|\varepsilon^{\frac32}h_0\|_\infty+
\sup_{0\leq s\leq T}\|f^\varepsilon\|)+C\mathcal{I}_1t\varepsilon  +C\mathcal{I}_2t\varepsilon
^{k-1} \},
\end{split}
\end{equation*}
where we have used $\int_0^t\frac{1}{(1+s)^p}ds  <+\infty$.
Now for $0\leq t\leq
\varepsilon^{-m}$, where $0<m\leq\frac12\frac{2k-3}{2k-2}\,(<\frac12)$
\[
\begin{split}
&\mathcal{I}_1 \leq 2\sum_{i=1}^{2k-1}\varepsilon^{i-1}(1+t)^{i-1} \leq C
\sum_{i=1}^{2k-1} (\varepsilon +\varepsilon^{1-m})^{i-1}\leq C,\\
& \mathcal{I}_2=\sum_{2k\leq i+j\leq 4k-2}\varepsilon^{i+j-2k}(1+t)^{i+j-2}
\leq C(1+\varepsilon^{-m})^{2k-2} \leq C\varepsilon^{-m(2k-2)}.
\end{split}
\]
Thus we obtain
$$\mathcal{I}_1t\varepsilon +
\mathcal{I}_2t\varepsilon
^{k-1} \leq C(\varepsilon^{1-m}+\varepsilon^{k-1-m(2k-1)})
\leq C\varepsilon^{\frac12-m} $$
and hence, for sufficiently small $\varepsilon$,
\begin{equation*}
  \|f^\varepsilon(t)\|+
\|\nabla\phi_R^\varepsilon(t)\|\leq C(\|f^\varepsilon(0)\|+
\|\nabla\phi_R^\varepsilon(0)\|+1)\left\{ 1+{\varepsilon}^{\frac12-m}
\|\varepsilon^{\frac32}h_0\|_\infty+{\varepsilon}^{\frac12-m}
\sup_{0\leq s\leq T}\|f^\varepsilon\|\right\}
\end{equation*}
For $t\leq T\,(=\varepsilon^{-m})$, since
$m<1/2$, letting $\varepsilon$ small, we conclude the proof of our theorem as
\[
 \sup_{0\leq t\leq \varepsilon^{-m}}\{\|f^\varepsilon\| + \|\nabla\phi_R^\varepsilon(t)\|\}
\leq C\{1+ \|f^\varepsilon(0)\|+
\|\nabla\phi_R^\varepsilon(0)\| + \|\varepsilon^{\frac32}h_0\|_\infty\}\,.
\]
Note that $C$ is independent of $\varepsilon$.
\end{proof}

\end{document}